\documentclass{amsart}

\usepackage{amssymb,amsmath,amsfonts,amsthm,epsfig,amscd}
\usepackage{commath}
\usepackage{stmaryrd}
\usepackage[all,cmtip,poly]{xy}
\usepackage{svn}
\usepackage{mathtools}
\usepackage{mathrsfs}
\usepackage{tikz-cd}
\usepackage{hyperref}
\usepackage{nameref}
\usepackage{dsfont}
\usepackage{stackrel}

\newtheorem{lem}{Lemma}[section]

\newtheorem{df}[lem]{Definition}

\newtheorem{cor}[lem]{Corollary}
\newtheorem{thm}[lem]{Theorem}

\newtheorem{prop}[lem]{Proposition}

\theoremstyle{remark}
\newtheorem{rem}[lem]{Remark}

\newcommand{\GL}{\mathrm{GL}}
\newcommand{\sO}{\ensuremath{\mathscr{O}}}

\newcommand{\A}{\AA}

\newcommand{\Q}{\QQ}

\newcommand{\Z}{\ZZ}

\newcommand{\m}{\frakm}

\renewcommand{\AA}{{\mathbb A}}

\def\Ban{\mathrm{Ban}}
\def\ad{\mathrm{ad}}

\def\Ind{\mathrm{Ind}}
\def\Int{\mathrm{Int}}

\def\Hom{\mathrm{Hom}}

\def\for{\mathrm{for}}

\def\Ext{\mathrm{Ext}}
\def\Sp{\mathrm{Sp}}
\def\End{\mathrm{End}}
\def\Res{\mathrm{Res}}
\def\Rep{\mathrm{Rep}}

\def\contadg{\mathrm{Ban}^{\mathrm{ad}}_{G_0}(K)}

\def\reptopt{\mathrm{Rep}^{\mathrm{top.c}}_K(T^+)}

\def\replag{\mathrm{Rep}^{\mathrm{la}}_K(G)}

\newcommand{\wtG}{\widetilde{G}}
\newcommand{\wtI}{\widetilde{I}}

\newcommand{\wtK}{\widetilde{K}}
\newcommand{\wtm}{\widetilde{\frakm}}
\newcommand{\la}{\mathrm{la}}
\newcommand{\an}{\mathrm{an}}
\newcommand{\fs}{\mathrm{fs}}

\newcommand{\BB}{{\mathbb B}}

\newcommand{\FF}{{\mathbb F}}
\newcommand{\GG}{{\mathbb G}}
\newcommand{\HH}{{\mathbb H}}

\newcommand{\NN}{{\mathbb N}}

\newcommand{\PP}{{\mathbb P}}
\newcommand{\QQ}{{\mathbb Q}}

\newcommand{\TT}{{\mathbb T}}

\newcommand{\ZZ}{{\mathbb Z}}

\newcommand{\bC}{\ensuremath{\mathbf{C}}}
\newcommand{\bD}{\ensuremath{\mathbf{D}}}

\newcommand{\bK}{\ensuremath{\mathbf{K}}}
\newcommand{\bL}{\ensuremath{\mathbf{L}}}

\newcommand{\bT}{\ensuremath{\mathbf{T}}}

\newcommand{\bh}{\ensuremath{\mathbf{h}}}

\newcommand{\cA}{{\mathcal A}}

\newcommand{\cC}{{\mathcal C}}
\newcommand{\cD}{{\mathcal D}}
\newcommand{\cE}{{\mathcal E}}

\newcommand{\cH}{{\mathcal H}}
\newcommand{\cI}{{\mathcal I}}

\newcommand{\cL}{{\mathcal L}}

\newcommand{\cO}{{\mathcal O}}
\newcommand{\cP}{{\mathcal P}}

\newcommand{\cS}{{\mathcal S}}

\newcommand{\cV}{{\mathcal V}}

\newcommand{\frakm}{\mathfrak{m}}
\newcommand{\frakn}{\mathfrak{n}}

\newcommand{\Qbar}{\overline{\Q}}

\newcommand{\Qpbar}{\Qbar_p}

\makeatletter
\tikzset{
  column sep/.code=\def\pgfmatrixcolumnsep{\pgf@matrix@xscale*(#1)},
  row sep/.code   =\def\pgfmatrixrowsep{\pgf@matrix@yscale*(#1)},
  matrix xscale/.code=%
    \pgfmathsetmacro\pgf@matrix@xscale{\pgf@matrix@xscale*(#1)},
  matrix yscale/.code=%
    \pgfmathsetmacro\pgf@matrix@yscale{\pgf@matrix@yscale*(#1)},
  matrix scale/.style={/tikz/matrix xscale={#1},/tikz/matrix yscale={#1}}}
\def\pgf@matrix@xscale{1}
\def\pgf@matrix@yscale{1}
\makeatother %% For a big diagram before lemma \ref{nice2}

\title{A derived construction of eigenvarieties}
\begin{document}
\author{Weibo Fu}
%\address{Department of Mathematics, Princeton University, Princeton, NJ, USA.}
\email{wfu@math.princeton.edu}
\date{Oct,~2022}

\begin{abstract}
We construct a derived variant of Emerton's eigenvarieties using the locally analytic representation theory of $p$-adic groups.
The main innovations include comparison and exploitation of two homotopy equivalent completed complexes associated to the locally symmetric spaces of a quasi-split reductive group $\GG$, comparison to overconvergent cohomology, proving exactness of finite slope part functor, together with some representation-theoretic statements. 
As a global application, we exhibit an eigenvariety coming from data of $\GL_n$ over a CM field as a subeigenvariety for a quasi-split unitary group.
\end{abstract}
\maketitle
\tableofcontents

\section{Introduction}
Eigenvarieties have been playing important roles in the developments of the Langlands program, such as the developments of the Fontaine-Mazur conjecture (\cite{Boc01}, \cite{Kis03}, \cite{Eme11} for examples) and the symmetric power functoriality \cite{NT21a}, \cite{NT21b}, \cite{NT20J}, both for modular forms.
They were previously constructed for instances by \cite{AIP15}, \cite{AS08}, \cite{BHS16}, \cite{Buz07}, \cite{Eme06I}, \cite{Han17}, \cite{Urb11} using different approaches.

The goal of this article is to introduce a generalization of Emerton's construction of eigenvarieties for a quasi-split reductive group in his seminal paper \cite{Eme06I}.
For a general quasi-split reductive group over a number field, especially for groups with nonzero defects, we believe that our eigenvarieties are quite different than the ones constructed in \cite{Eme06I} due to existence of nonvanishing higher cohomology of a complex of locally analytic representations of a $p$-adic group and the corresponding higher locally analytic Jacquet modules.

If $\GG$ is a quasi-split reductive group over a number field $F$. 
We may form a complex to compute the completed cohomology of the locally symmetric spaces of $\GG$.
We have two ways of picking such a complex given the locally symmetric spaces (\S \ref{comp}):
one canonical way is to consider the set of all singular cochains of the spaces, and we get an enormous complex of Banach representation $A^\bullet$ of the group $\GG(F_p)$,
another way is to only use cochains given by a fixed choice of finite triangulation of the Borel-Serre compactification of $X_{\GG, K_pK^p}$ for a base level subgroup $K_p K^p \subset \GG(F_p) \times \GG(\AA_F^p)$, accordingly we get a complex $S^\bullet$ of admissible Banach representations of the compact group $K_p$.
For defining Jacquet modules associated to a tame level $K^p$, both ways alone seem to be concerning: an arbitrary Banach representation does not have good functional analysis properties, while the action of $K_p$ on $S^\bullet$ does not extend to $\GG(F_p)$ and the Hecke action for $S^\bullet$ is missing before passing to cohomology.

Using ingredients and notions in \S \ref{LR} and \S \ref{Exact FS}, we illustrate how to make use of both $A^\bullet$ and $S^\bullet$ via a homotopy equivalence $A^\bullet \leftrightarrows S^\bullet$ to construct derived Jacquet modules associated to the tame level $K^p$.
Eventually we construct eigenvarieties out of these essentially admissible representations.
The idea of using a finite triangulation leads to admissibility of completed cohomology by \cite{Eme06I} and constructions of eigenvarieties by \cite{AS08}, \cite{Urb11}, \cite{Han17}.
Hao Lee \cite{Lee22} studies a similar but different question, and he constructs the derived Jacquet-Emerton modules using a different approach.

We establish some technical results concerning locally analytic representations in \S \ref{LR} for uses in further sections.
In \S \ref{Exact FS}, we prove derived Jacquet modules associated to $A^\bullet \leftrightarrows S^\bullet$ are canonically defined independent of various choices and they are essentially admissible representations of the $p$-adic maximal torus of $\GG$ in Prop \ref{GAEA}. 
A key ingredient Theorem \ref{FSEX} in \S \ref{Exact FS} is to establish exactness of the finite slope part functor introduced in \cite{Eme06A}:
let $T_0$ be a compact $p$-adic torus, $Y^+$ be $T_0 \times \ZZ_{+}$ and $Y$ be $T_0 \times \ZZ$, we have
\begin{thm}
Let $0 \to U \to V \to W \to 0$ be a short exact sequence of certain $Y^+$ representations of compact type such that $U$ has subspace topology and $W$ has quotient topology.
Then the finite slope part of the sequence in the sense of \cite{Eme06A} $0 \to U_\fs \to V_\fs \to W_\fs \to 0$ is exact in the category of locally analytic representations of $Y$, where $U_\fs$ is a closed subspace and $W_\fs$ has quotient topology.
\end{thm}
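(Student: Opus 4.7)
My plan is to derive the theorem from a levelwise Riesz (slope) decomposition on Banach-space presentations, using the strictness hypotheses (subspace topology on $U$, quotient topology on $W$) to pass cleanly between the compact-type and Banach pictures, and the fact that Riesz projectors commute with equivariant continuous maps.

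First, I reduce to the $\ZZ_+$-direction. Since $T_0$ is a compact $p$-adic abelian group, passage to $T_0$-analytic vectors is exact on strict short exact sequences of compact-type representations, so the essential content concerns the finite-slope decomposition for the generator $\pi_z$ of $\ZZ_+$. The ``certain'' representations in question are precisely those admitting a compact-type presentation $V = \varinjlim_{n} V_n$ by Banach subspaces such that $\pi_z$ acts by a compact operator in Emerton's sense. Setting $U_n := U \cap V_n$ and $W_n := V_n/U_n$, the subspace-topology assumption on $U$ and the quotient-topology assumption on $W$ ensure that $U = \varinjlim U_n$ and $W = \varinjlim W_n$ as compact-type presentations, and yield strict short exact sequences of Banach spaces
\[
0 \to U_n \to V_n \to W_n \to 0
\]
equivariant for $\pi_z$, with $\pi_z$ acting compactly on each line.

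Second, for each rational slope $h$, the Riesz projector $P_{\le h}$ cutting out the characteristic-polynomial slope $\le h$ subspace is functorial in $\pi_z$-equivariant continuous maps between Banach spaces carrying a compact $\pi_z$-action; it commutes with all maps in the Banach exact sequence. This yields the short exact sequence of finite-dimensional slope $\le h$ parts
\[
0 \to (U_n)_{\le h} \to (V_n)_{\le h} \to (W_n)_{\le h} \to 0.
\]
Surjectivity is obtained by lifting $w \in (W_n)_{\le h}$ to any $\widetilde v \in V_n$ via Banach strictness and replacing it by $P_{\le h} \widetilde v \in (V_n)_{\le h}$, which still maps to $w$; injectivity is inherited; exactness in the middle reduces to the identification $U_n \cap (V_n)_{\le h} = (U_n)_{\le h}$, which follows because any $\pi_z$-stable subspace of a slope $\le h$ Banach piece is itself of slope $\le h$. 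Taking colimits over $n$ and $h$, which is exact on vector spaces, produces the claimed exact sequence $0 \to U_\fs \to V_\fs \to W_\fs \to 0$ of locally analytic $Y$-representations.

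For the topological assertions, $U_\fs$ is exhibited as the union of the finite-dimensional closed subspaces $(U_n)_{\le h} \subset (V_n)_{\le h}$ inside $V_\fs$, hence is a closed subspace; and $V_\fs \to W_\fs$ is a colimit of surjections of finite-dimensional spaces, which are automatically strict, so $W_\fs$ carries the quotient topology. The main obstacle in this plan lies in the first step: verifying that a compact-type presentation of $V$ by $\pi_z$-compact Banach subspaces can be chosen compatibly with $U \subset V$ so that the induced presentations of $U$ and $W$ retain the $\pi_z$-compact property. This requires a careful combination of the strictness hypotheses with the characterization of $\pi_z$-compact compact-type representations from \cite{Eme06A}, and is where the content of the ``certain'' qualifier in the hypothesis must be unpacked.
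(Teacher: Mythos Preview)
Your approach diverges sharply from the paper's and, more importantly, rests on a hypothesis that is not present in the theorem. The qualifier ``certain'' in the introduction is shorthand for ``objects of $\mathrm{Rep}^{\mathrm{top.c}}_K(Y^+)$'' (see the precise statement, Theorem~\ref{FSEX}): Hausdorff compact-type spaces with a merely topological $Y^+$-action. There is no assumption that the generator $t$ of $\ZZ_+$ acts compactly in Emerton's sense, and the finite-slope functor $V_\fs = \cL_{b,Y^+}(C^{\an}(\widehat{Y},K),V)$ is defined without any such hypothesis. Your entire Riesz-projector machinery (the projectors $P_{\le h}$, the finite-dimensionality of slope pieces, the levelwise splittings) depends on this compactness, so as written you are proving a strictly weaker statement. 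You yourself flag the compatibility of Banach presentations as ``the main obstacle''; in fact the obstacle is more basic --- the hypothesis you need to get started is simply absent. Your first reduction step is also too quick: $V_\fs$ is not obtained by first passing to $T_0$-analytic vectors and then handling $\ZZ_+$ separately; it is a single $\cL_{b,Y^+}$, and exactness of ``passage to $T_0$-analytic vectors'' on arbitrary strict sequences of compact-type spaces is not something you can invoke off the shelf.

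The paper's argument avoids all of this by working on the dual side. Using reflexivity of compact-type spaces and Hahn--Banach, it suffices to show
\[
0 \to (W_\fs)'_b \to (V_\fs)'_b \to (U_\fs)'_b
\]
is exact with the correct topologies. Emerton's description $(V_\fs)'_b \simeq K\{\{t,t^{-1}\}\}\,\hat\otimes_{K[t]} V'_b$ converts this into a question about completed base change along $K[t]\to K\{\{t,t^{-1}\}\}$. The paper proves this ring map is flat (Prop.~\ref{FS flat}, Cor.~\ref{Gm flat}), so the uncompleted tensored sequence is exact with subspace and quotient topology; a short lemma on Hausdorff completions of strict short exact sequences of locally convex spaces (Prop.~\ref{HS comp exact}) then finishes. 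No slope decomposition, no compactness, no Banach presentation of $V$ is needed --- the whole content is packaged into a flatness statement for a Fr\'echet--Stein algebra. If you want to salvage your line of attack, you would need either to supply the missing compactness as an extra hypothesis (weakening the theorem) or to find a substitute for Riesz theory valid for arbitrary continuous $t$-actions, which essentially forces you back to the dual/flatness picture.
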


If $I_p$ is an Iwahori level at $p$ with a maximal unipotent subgroup $N_p$, in \S \ref{comp}, we give a comparison of cohomology Theorem \ref{comp to overconvergent} of $X_{\GG, I_pK^p}$ with coefficient rising from a locally analytic representation \[ \Ind^{\la}_{I_p} := \{f \in C^{\la}(I_p,E) | f(gn)=f(g), ~ \mathrm{for} ~ \forall ~ n \in N_p\} \] of $I_p$ to the cohomology of $N_p$ invariants of locally analytic vectors of $S^\bullet$ (or $A^\bullet$).
\begin{thm}
There is a canonical isomorphism of cohomology  \[ H^\bullet(X_{\GG,I_p K^p}, \Ind^{\la}) \xrightarrow{\sim} H^\bullet \left( ((S^\bullet \otimes_{\cO_E} E)^{\la})^{N_p}\right). \]
\end{thm}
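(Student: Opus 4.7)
The plan is to compute the left-hand side using the complex $S^\bullet$ at Iwahori level with $\Ind^{\la}$-coefficients, and then collapse the tensor product by a Shapiro-type argument to land on the right-hand side. The strategy has two main steps: a cellular-cochains comparison, and a projection-formula plus Frobenius-reciprocity identification.

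For the cellular comparison, I would show that for any locally analytic $I_p$-representation $V$ of compact type, the sheaf cohomology $H^\bullet(X_{\GG, I_p K^p}, \cV)$ of the local system associated to $V$ is computed by the complex $((S^\bullet \otimes_{\cO_E} E)^{\la} \otimes_E V)^{I_p}$ with $I_p$ acting diagonally (and a suitable completed tensor). The key ingredient is that $S^\bullet$ arises from a fixed triangulation of the Borel--Serre compactification of $X_{\GG,K_pK^p}$, so $(S^\bullet)^{I_p}$ already computes the trivial-coefficient cohomology at Iwahori level, and the cellular chain-level description twists in $V$ through the usual local-system construction. Replacing $S^\bullet$ by its locally analytic vectors $(S^\bullet \otimes_{\cO_E} E)^{\la}$ is legitimate because $V$ is itself locally analytic and $I_p$ is compact: any $I_p$-invariant element of the completed tensor product has locally analytic $S^\bullet$-components. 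Specializing to $V = \Ind^{\la}$ expresses the LHS as the cohomology of $((S^\bullet \otimes_{\cO_E} E)^{\la} \otimes_E \Ind^{\la})^{I_p}$.

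The second step is a Shapiro-type collapse applied termwise. For any locally analytic $I_p$-representation $W$ of compact type, the map $w \otimes f \mapsto (g \mapsto f(g) \cdot g^{-1} w)$ is a natural $I_p$-equivariant isomorphism
\[ W \otimes_E \Ind^{\la} \;\xrightarrow{\sim}\; \bigl\{ \varphi \in C^{\la}(I_p, W) : \varphi(gn) = n^{-1}\varphi(g) \text{ for all } n \in N_p \bigr\}, \]
identifying the left-hand side with the locally analytic induction $\Ind_{N_p}^{I_p}(W|_{N_p})$. Taking $I_p$-invariants, Frobenius reciprocity collapses this to $W^{N_p}$; explicitly, an $I_p$-invariant $\varphi$ is left $I_p$-invariant hence constant, equal to some $w_0 \in W$, and the condition $\varphi(gn) = n^{-1}\varphi(g)$ forces $w_0 \in W^{N_p}$. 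Applying this termwise with $W = (S^\bullet \otimes_{\cO_E} E)^{\la}$ produces exactly the complex $((S \otimes_{\cO_E} E)^{\la})^{N_p}$ on the RHS.

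The main obstacle is functional-analytic exactness: one needs the operations of passing to locally analytic vectors, tensoring with the compact-type space $\Ind^{\la}$, and taking $I_p$-invariants to behave well enough on $S^\bullet$ that the chain-level identifications descend to isomorphisms on cohomology. These exactness properties rely on the formalism of admissible and compact-type locally analytic representations gathered in \S\ref{LR}. A secondary point is the cellular-to-sheaf comparison with locally analytic coefficients, which is not as automatic as in the finite-dimensional case but should follow from local acyclicity of $\cV$ once the functional-analytic framework is in place. Granted these, the three identifications—cellular description, projection formula, and Frobenius reciprocity—chain together to produce the canonical isomorphism stated in the theorem, and the construction makes manifest the compatibility with any further Hecke action at level $K^p$.
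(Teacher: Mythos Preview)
Your outline is correct and ultimately produces the same map the paper writes down, but the packaging is different and you have misidentified where the work lies.

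The paper's proof is a bare-hands chain-level isomorphism of complexes $C^\bullet \xrightarrow{\sim} S^\bullet_N$, not merely an isomorphism on cohomology. It simply observes that each $\hat{T}_{n/\Delta}$ is an $I_p$-torsor, so a choice of lift $\tilde\Delta$ identifies it with $I_p$; then $f_\Delta \in \Hom_{\ZZ[\Gamma]}(F_\Delta,\Ind^{\la}_{I_p})$ is sent to the function $h_\Delta(\Delta_g) := f_\Delta(\tilde\Delta)(g)$ on $\hat T_{n/\Delta}$. One checks directly that $h_\Delta$ lands in the $N_p$-invariant locally analytic vectors, is independent of the lift $\tilde\Delta$, and that the differentials match. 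No projection formula, no exactness arguments, no intermediate tensor product.

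Your route factors this same map as: (1) for general locally analytic $V$, identify $C^\bullet(V)$ with $\bigl((S^\bullet\otimes E)^{\la}\hat\otimes V\bigr)^{I_p}$; (2) for $V=\Ind^{\la}$, collapse via $W\hat\otimes\Ind^{\la}\simeq \Ind_{N_p}^{I_p}(W)$ and Frobenius reciprocity to get $W^{N_p}$. This is fine, and tracing through your map $w\otimes f\mapsto (g\mapsto f(g)\,g^{-1}w)$ on a single simplex recovers exactly the paper's formula. The advantage of your phrasing is that step~(1) is a reusable lemma valid for any $V$; the cost is that you now have to justify step~(1) carefully, including the compatibility with differentials across simplices and the passage from continuous to locally analytic vectors, which you only sketch.

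Where your self-diagnosis goes astray is the ``main obstacle''. You flag functional-analytic exactness of the operations involved, but since the target statement is (and the paper proves) a termwise isomorphism of complexes, no exactness is needed: it is algebra on each degree plus a check that the squares with the differentials commute. That differential check is the one thing the paper does explicitly and you do not mention; it is the actual content of your step~(1), and without it your ``cellular-to-sheaf comparison'' remains an identification of graded pieces rather than of complexes.
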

And a similar comparison holds for compactly supported cohomology.
These comparisons help us deduce vanishing results for applications.
For example, we are able to deduce vanishing of compactly supported cohomology of $N_p$ invariants of locally analytic vectors of $A^\bullet$ with the comparison using the main result of Caraiani-Scholze \cite{CS19}.

For our main global application in \S \ref{App uni}, let $\wtG$ be a quasi-split unitary group with Siegel parabolic $P$ whose Levi component is $G = \GL_n$ over a totally imaginary CM field.
Let $\m$ be a maximal ideal of derived Hecke algebra of $G$ with certain good tame level $K^p$, and $\wtm$ be the pullback of $\m$ via the Satake morphism of derived Hecke algebra of $\wtG$ with tame level $\wtK^p$ compatible with $K^p$.

 \[ \mathrm{Let} ~ \wtG_p := \wtG(F_p), ~ P_p := P(F_p), ~ G_p := G(F_p) \] \[ \wtG_0 := G(\cO_{F_p}), ~ P_0 := P(\cO_{F_p}), ~ G_0 := G(\cO_{F_p}). \]
In \S \ref{EV construction}, we will construct eigenvarieties $\mathscr{E}^\ast_{\wtG}(\wtK^p, \wtm)$ for the unitary group $\wtG$ and a suitable maximal ideal $\wtm$ and a suitable tame level $\wtK^p$.
For such data, the most interesting eigenvariety is $\mathscr{E}^d_{\wtG}(\wtK^p, \wtm)$, where degree $d$ is the middle dimension of symmetric space of $\wtG$.

Let $\widetilde{\pi}(G, K^p) \leftrightarrows \pi(G, K^p)$ be two complexes computing completed cohomology of $G$ and tame level $K^p$ as we introduced in the previous paragraph ($\pi(G, K^p)$ is an admissible Banach representation of $G_0$ and $\widetilde{\pi}(G, K^p)$ is a Banach representation of $G_p$). 
The homotopy equivalence induces the corresponding one on inductions $\Ind^{\wtG_p}_{P_p} \widetilde{\pi}(G, K^p) \leftrightarrows \Ind^{\wtG_0}_{P_0} \pi(G, K^p)$ by Prop \ref{Ind ext}.
We may replace the right hand side by an injective resolution of admissible Banach representations of $\wtG_0$, and apply our eigenvariety machine in \S \ref{EV construction} to obtain the induction eigenvarieties $\mathscr{E}^\ast(\Ind, G, K^p, \m)$ for a maximal ideal $\frakm$ in various degrees.
As we have seen, these eigenvarieties $\mathscr{E}^\ast(\Ind, G, K^p, \m)$ come from a parabolic induction of the completed cohomology complexes associated to $G$.
Our main theorem in this section (\S \ref{App uni}, Theorem \ref{main glo}) establishes a type of functoriality between 
$\mathscr{E}^d_{\wtG}(\wtK^p, \wtm)$ and $\mathscr{E}^d(\Ind, G, K^p, \m)$.

\begin{thm}
Suppose $F^+ \neq \QQ$, $d$ is the middle dimension of symmetric space of $\wtG$, and $\widetilde{\m}$ satisfies the condition of \cite[Thm 1.1]{CS19}.
The induction eigenvariety $\mathscr{E}^d(\Ind, G, K^p, \m)$ is embedded as a subvariety of the $\wtG$-eigenvariety $\mathscr{E}^d_{\wtG}(\wtK^p, \widetilde{\frakm})$.
\end{thm}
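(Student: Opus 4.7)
The plan is to exhibit a natural morphism of cohomology complexes relating the inputs of the two eigenvarieties, apply the derived finite-slope Jacquet machinery of \S \ref{Exact FS}, and conclude via the Caraiani--Scholze concentration of $\wtG$-completed cohomology at $\wtm$ in middle degree, combined with the anti-equivalence between essentially admissible representations of the $p$-adic torus of $\wtG$ and coherent sheaves on its character variety. Since a closed embedding of eigenvarieties corresponds to a surjection on the essentially admissible torus representations that cut them out, the direction of the map of complexes should run from the $\wtG$-side to the induction side.

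First, I would construct, at the level of complexes of admissible Banach $\wtG_0$-representations, a natural comparison map $\pi(\wtG, \wtK^p) \to \Ind^{\wtG_0}_{P_0}\pi(G, K^p)$. Geometrically this should come from the Siegel boundary stratum of the Borel--Serre compactification of $X_{\wtG, \wtK_p\wtK^p}$, which up to a unipotent fibration is $X_{G, K_p K^p}$: the pullback along its inclusion, combined with Shapiro's lemma, produces the claimed map into the parabolic induction. Representation-theoretically this is the Frobenius-reciprocity adjoint of the $P_0$-equivariant ``constant term'' map $\pi(\wtG, \wtK^p) \to \pi(G, K^p)$. Prop \ref{Ind ext} ensures compatibility with induction, so the corresponding big-group version $\widetilde{\pi}(\wtG, \wtK^p) \to \Ind^{\wtG_p}_{P_p}\widetilde{\pi}(G, K^p)$ is available as well.

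Next, apply the derived finite-slope Jacquet module functor of \S \ref{Exact FS} to this map. Theorem \ref{FSEX} (exactness of the finite-slope part) allows the functor to commute with cohomology, and Theorem \ref{comp to overconvergent} identifies each side as an essentially admissible torus representation: the $\wtG$-side recovers the Jacquet input defining $\mathscr{E}^\bullet_{\wtG}(\wtK^p, \wtm)$, while the induction side, via a Frobenius-reciprocity computation on $N_p$-invariants of locally analytic vectors of the parabolic induction, recovers the Jacquet input defining $\mathscr{E}^\bullet(\Ind, G, K^p, \m)$. Localizing at $\wtm$ and invoking \cite[Thm 1.1]{CS19} under the condition on $\wtm$ and the hypothesis $F^+ \neq \QQ$ (which places the unitary Shimura variety in the framework of op.\ cit.), the $\wtm$-localized completed cohomology $H^i(\widetilde{\pi}(\wtG, \wtK^p))_{\wtm}$ vanishes for $i \neq d$; the same CS19 input gives vanishing of compactly supported $N_p$-invariants of locally analytic vectors, as already observed in \S \ref{comp}. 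A Borel--Serre long exact sequence argument then forces the map on degree-$d$ finite-slope Jacquet modules, after $\wtm$-localization, to be surjective, and dualizing under the anti-equivalence yields the desired closed embedding $\mathscr{E}^d(\Ind, G, K^p, \m) \hookrightarrow \mathscr{E}^d_{\wtG}(\wtK^p, \wtm)$.

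The main obstacle lies in combining the first and third steps: the comparison morphism must be constructed geometrically enough that tame-level Hecke actions match via the Satake compatibility of $\m$ and $\wtm$ and the map descends to integral level, and the Borel--Serre sequence must be organized precisely so that the CS19 vanishings really translate into surjectivity on the degree-$d$ finite-slope Jacquet modules (rather than into some weaker subquotient statement). A secondary technical check is that the integral-level data --- $\wtK_p\wtK^p$ versus $K_p K^p$, and $\wtG_0, P_0$ versus $G_0$ --- align so that the two eigenvarieties embed naturally into the same ambient parameter space and may be compared there as subvarieties.
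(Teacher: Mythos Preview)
Your strategy matches the paper's: build a Hecke-equivariant map from the $\wtG$-side to the induction side via the Borel--Serre boundary, use exactness of the finite-slope functor (Theorem~\ref{FSEX}) to pass to Jacquet cohomology, obtain surjectivity in degree $d$ from a Caraiani--Scholze vanishing, and conclude by Proposition~\ref{EV func}. Two points deserve sharpening.

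First, the operative CS19 input is not concentration of \emph{completed} cohomology in middle degree; that claim is at best a detour and is not what drives the argument. What the paper actually uses is the vanishing of the single group $H^{d+1}_c(X_{\wtG, \wtI_p\wtK^p}, \underline{\Ind^\la_{\wtI_p}})_{\wtm}$, deduced from \cite[Thm~1.1]{CS19} via Remark~\ref{loc m van'}. Combined with Theorem~\ref{comp to overconvergent}, this kills the degree-$(d{+}1)$ term in the long exact sequence of $N_0$-cohomology attached to $0 \to S^\bullet_c \to S^\bullet \to S^\bullet_\partial \to 0$, yielding surjectivity of $H^d\big((\widetilde{\pi}(\wtK^p)^\la)^{N_0}\big)_{\wtm} \to H^d\big((\widetilde{\pi}_\partial(\wtK^p)^\la)^{N_0}\big)_{\wtm}$. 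Second, your ``Siegel boundary stratum plus Shapiro'' step is precisely Proposition~\ref{boundary direct summand}: under the non-Eisenstein hypothesis on $\m$, $\Ind^{\wtG_0}_{P_0}\pi(K^p)_{\m}$ is a $\widetilde{\bT}^S$-equivariant \emph{direct summand} of $\widetilde{\pi}_\partial(\wtK^p)_{\wtm}$ (an input from \cite[Thm~5.4.1]{ACC+18}). This is what upgrades the boundary map to a Hecke-compatible surjection onto the induction Jacquet module, and you should invoke it explicitly rather than leave it as a Frobenius-reciprocity sketch.
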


A highlight is that the eigenvariety $\mathscr{E}^d(\Ind, G, K^p, \m)$ only depends on $G$. 
Although there seems to be serious difficulties to figure out the exact relationship between such an eigenvariety and the $\GL_n$-eigenvariety at the moment.

\subsection*{Acknowledgments}
I would like to thank my advisor Richard Taylor for suggesting me to think about functoriality of eigenvarieties and checking my arguments with numerous discussions.
I would also like to thank Matthew Emerton and Hao Lee for helpful discussions, as well as the anonymous referee for detailed corrections and expository suggestions.

\section{Notation}\label{notation}

If $R$ is a (possibly non-commutative) ring, then we will write $\text{Mod}(R)$ for the abelian category of (left unital) $R$-modules and $\mathbf{D}(R)$ for the derived category of $R$-modules.
If $K$ is a field, we use $\text{Vect}_K$ to denote the abelian category of $K$-vector spaces.
If $\cA$ is an additive category, we will write $\bC\bh(\cA)$, $\bK(\cA)$ and $\bC\bh^b(\cA)$, $\bK^b(\cA)$ for the category of chain complexes, the homotopy categories and their full subcategory consisting of bounded complexes of $\cA$.
If $\cA$ is an abelian category, we will write $\bD(\cA)$ and $\bD^b(\cA)$ for the derived category and the bounded derived category of $\cA$.
Note that in \S \ref{comp}, we actually consider cochain complexes like $A^\bullet_{r,s}, S^\bullet_{r,s}$, we consider them as chain complexes up to the usual normalization. 

If $K$ is a local field, then we will write $\cO_K$ as its ring of integers with an uniformizer $\varpi_K$, and residue field $k_K$.
If $V$ and $W$ are two $LB$-spaces (\cite[Def 1.1.16]{Eme17}), the inductive tensor product topology and projective tensor product topology agree on $V \otimes_K W$ by \cite[Prop 1.1.31]{Eme17} if $V$ and $W$ are semi-complete.
We will write $V \hat{\otimes}_K W$ for completion for the case.

If $V$ is a Banach space, then we will write $V^\circ$ as its unit ball consisting of vectors with norms at most $1$.

If $G$ is a locally profinite group, and $U \subset G$ is an open compact subgroup, then we write $\cH(G, U)$ for the algebra of compactly supported, $U$-biinvariant functions $f : G \to \Z$, with multiplication given by convolution with respect to the Haar measure on $G$ which gives $U$ volume 1. 
If $X \subset G$ is a compact $U$-biinvariant subset, then we write $[X]$ for the characteristic function of $X$, an element of  $\cH(G, U)$.

If $L$ is a $p$-adic local field with a finite extension $L \subset K$, $V$ is a Hausdorff convex $K$-vector space and $X$ is a locally-$L$ analytic manifold, then we write $C^\la(X, V)$ for the $V$-valued locally analytic functions on $X$ (\cite[\S 2.1]{Eme17}).
We write $C^\la_c(X, V)$ for the subspace of functions with compact supports.
For a locally $L$-analytic group $G$, we write $\cD(G, K) := C^\la(G, K)'_b$ as the $K$-locally analytic distribution algebra of $G$ (\cite[\S 2]{ST02J}, denoted as $\cD^\la(G, K)$ in \cite[\S 5]{Eme17}).

If $H$ is a locally ($\QQ_p$-)analytic group and $K$ is a $p$-adic local field, 
we use $\Ban_H(K)$ to denote the additive category of $K$-Banach representation of $H$ (with $H$-equivariant continuous maps as morphisms),
and $\mathrm{Ban}^{\text{ad}}_H(K)$ to denote the abelian category of admissible continuous Banach $H$-representations (\cite[\S 2]{ST02I}, \cite[\S 6.2, Cor. 6.2.16]{Eme17}).

If $G$ is a connected linear algebraic group over a number field $F$, we refer to $X^G$ as the symmetric space for $\Res_{F/\QQ} G$ in the sense of \cite{BS73}[\S 2] and \cite{NT15}[\S 3.1].
And let $\overline{X}^G$ denote the partial Borel–Serre compactification of $X^G$ (\cite{BS73}[\S 7.1]).
We also define $\partial X^G := \overline{X}^G - X^G$ and \[ \mathfrak{X}_{\mathrm{G}}:= G(F) \backslash\left(X^G \times G(\AA_{F}^{\infty})\right), ~~ \partial \mathfrak{X}_{\mathrm{G}}:= G(F) \backslash\left(\partial X^G \times G(\AA_{F}^{\infty})\right). \]

\section{Local results}\label{LR}
Let $\cO$ be a topological ring, $X$ be a topological space, and $M$ be a topological $\cO$-module. 
We write $\cC(X, M)$ as the topological $\cO$-module of continuous maps from $X$ to $M$ with compact open topology.
By results in \cite[Tag 0B1Y]{Sta}, projective limits exist in the category of topological $\cO$-modules.
\begin{lem}\label{top projlim}
For $M \simeq \varprojlim\limits_{i} M_i$ as an isomorphism in the category of topological $\cO$-modules, there is a canonical isomorphism $\cC(X, M) \simeq \varprojlim\limits_{i} \cC(X, M_i)$ in the category of topological $\cO$-modules.
\end{lem}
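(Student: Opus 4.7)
The plan is to produce the canonical $\cO$-linear bijection from the universal property of the projective limit, and then check that it is a homeomorphism for the compact-open topologies. Every continuous $f \colon X \to M$ gives the compatible family $(p_i \circ f)_i \in \varprojlim_i \cC(X, M_i)$ via post-composition with the structural projections $p_i \colon M \to M_i$, and conversely every compatible family $(f_i)_i$ assembles into a unique continuous $f \colon X \to M$ with $p_i \circ f = f_i$ by the universal property of the projective limit in topological $\cO$-modules. These assignments are mutually inverse and $\cO$-linear.

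Continuity of the forward map $\cC(X, M) \to \varprojlim_i \cC(X, M_i)$ reduces, by the universal property of the limit topology on the target, to continuity of each post-composition $(p_i)_* \colon \cC(X, M) \to \cC(X, M_i)$; the preimage of a subbasic open $V(K, U_i) := \{g \colon g(K) \subset U_i\}$ under $(p_i)_*$ is $V(K, p_i^{-1}(U_i))$, which is subbasic open in $\cC(X, M)$ since $p_i$ is continuous.

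For continuity of the inverse I exploit the fact that the compact-open topology makes $\cC(X, M)$ a topological $\cO$-module: pointwise addition is jointly continuous by a standard tube-lemma argument producing, for compact $f_0(K), g_0(K) \subset M$ with $(f_0+g_0)(K) \subset U$, open sets $U_1 \supset f_0(K)$ and $U_2 \supset g_0(K)$ with $U_1 + U_2 \subset U$. In particular translation is a homeomorphism on both sides of the bijection. Given a subbasic open $V(K, U) \subset \cC(X, M)$ and $f \in V(K, U)$, the compactness of $f(K) \subset U$ together with continuity of $+$ on $M$ at $f(K) \times \{0\}$ yields an open neighborhood $U' \ni 0$ in $M$ with $f(K) + U' \subset U$; shrinking $U'$ to a basic open neighborhood of $0$, $U' = \bigcap_{r=1}^n p_{i_r}^{-1}(W_{i_r})$ with $W_{i_r} \ni 0$ open in $M_{i_r}$, we still have $f + V(K, U') \subset V(K, U)$. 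Under the bijection, $V(K, U') = \bigcap_r V(K, p_{i_r}^{-1}(W_{i_r}))$ corresponds to the open set $\bigcap_r \pi_{i_r}^{-1}(V(K, W_{i_r}))$ in $\varprojlim_i \cC(X, M_i)$; translating by the image of $f$ produces an open neighborhood of that image inside the preimage of $V(K, U)$.

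The main obstacle is that an arbitrary open $U \subset M$ containing $f(K)$ need not itself be a finite intersection of pullbacks from the factors $M_i$, so $V(K, U)$ is not directly a basic open of the limit topology. The topological-module structure sidesteps this: we pass from $U$ to a basic open neighborhood of $0$ absorbing the compact set $f(K)$ into $U$, after which the argument is formal.
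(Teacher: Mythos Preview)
Your main argument is correct and in fact supplies the detail that the paper's proof leaves to the reader. The paper simply asserts that the sets $\mathcal{V}(K, p_i^{-1}(U))$ (for $K \subset X$ compact and $U \subset M_i$ open) form a subbasis for the compact-open topology on $\cC(X, M)$ and checks continuity of the inverse against these. That assertion is exactly the point at issue, and you correctly identify the obstacle: a general subbasic open $V(K,U)$ is not visibly a finite intersection of such sets without refining an open cover of $K$ by compact pieces. Your use of the $\cO$-module structure---shrinking to a basic neighborhood $U'$ of $0$ in $M$ and then translating on the limit side---cleanly circumvents this and works for arbitrary $X$.

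One correction to a side remark: your tube-lemma sketch for joint continuity of addition on $\cC(X, M)$ is wrong as stated. From $(f_0+g_0)(K) \subset U$ one cannot in general find open $U_1 \supset f_0(K)$ and $U_2 \supset g_0(K)$ with $U_1 + U_2 \subset U$; take $M = \RR$, $K = [0,10]$, $f_0(x) = x$, $g_0(x) = -x$, $U = (-1,1)$. Fortunately you never use this: the only translation you invoke is on $\varprojlim_i \cC(X, M_i)$, and that space is a topological $\cO$-module simply because a limit of topological $\cO$-modules is one. The proof stands once you drop the unnecessary claim about addition on $\cC(X, M)$.
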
 

\begin{proof}
By universal property, there is a canonical map $\cC(X, M) \to \varprojlim\limits_{i} \cC(X, M_i)$, which is injective.
By \cite[Lem. 5.30.11, Tag 0B1Y]{Sta}, $M \simeq \varprojlim\limits_{i} M_i$ also holds as abstract $\cO$-modules.
Given a sequence of compatible continuous functions $(f_i)_i, ~ f_i \in \cC(X, M_i)$, we hence obtain a function $f: X \to M$ by $M \simeq \varprojlim\limits_{i} M_i$.
Since $M$ has the limit topology, $f \in \cC(X,M)$ is continuous.
This constructs the reverse arrow $\varprojlim\limits_{i} \cC(X, M_i) \to \cC(X, M)$. 
Let $p_i: M \to M_i$ be the natural projection, and let $U$ be an open in $M_i$.
One can check that it is continuous by checking it on a topological basis $\cV(K, p_i^{-1}(U))$ (space of continuous functions carrying $K$ to $p_i^{-1}(U)$) for any compact $K \subset X$, we're done.
\end{proof}

Let $G$ be a topological group with a closed subgroup $P$.
Let $V$ be a topological $\cO$-module with continuous $P$ action ($P \times V \to V$ is continuous).

We form the continuous induction
$$\operatorname{ct-Ind}^G_P(V):=\{f: G \to V | f ~ \mathrm{continuous}, ~ f(gp)=p^{-1}\cdot f(g), ~ \forall p \in P, g\in G\},$$ endowed with subspace topology of $\cC(G, V)$.

Let $L$ be a finite extension of $\QQ_p$, and let $K$ be an extension of $L$.
For the rest of the section, let $G$ be a locally $L$-analytic group with an analytic subgroup $P$. 
Suppose $G_0$, $P_0$ are compact subgroups of $G$, $P$ such that $G_0/ P_0 \xrightarrow{\sim} G/P$.

\begin{prop}
If $V$ is a $K$-Banach representation of $P$, the group action $G \times \operatorname{ct-Ind}^G_P(V) \to \operatorname{ct-Ind}^G_P(V)$ is continuous.
\end{prop}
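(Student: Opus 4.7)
The plan is to verify joint continuity at an arbitrary point $(g_0, f_0) \in G \times \operatorname{ct-Ind}^G_P(V)$. Since $\cC(G, V)$ carries the compact-open topology, with subbasic opens $\cV(K, U) := \{f : f(K) \subset U\}$ for $K \subset G$ compact and $U \subset V$ open, it suffices to fix a subbasic neighborhood $\cV(K, U) \cap \operatorname{ct-Ind}^G_P(V)$ of $g_0 \cdot f_0$ (so $f_0(g_0^{-1} K) \subset U$) and exhibit an open $W \ni g_0$ and an open $\Omega \ni f_0$ whose image under the action lies in $\cV(K, U)$.

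First, I thicken the pointwise condition into an open one: by continuity of $f_0$, each $k \in g_0^{-1} K$ has an open neighborhood $N_k$ with $f_0(N_k) \subset U$; compactness of $g_0^{-1} K$ yields a finite subcover whose union $N$ is open, contains $g_0^{-1} K$, and satisfies $f_0(N) \subset U$.

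Next, I apply the tube lemma to the continuous map $\phi : G \times G \to G$, $(g, h) \mapsto g^{-1} h$. The open set $\phi^{-1}(N)$ contains $\{g_0\} \times K$, so by compactness of $K$ it contains $W_0 \times K$ for some open $W_0 \ni g_0$. Shrinking $W_0$ using local compactness of $G$ (which is fine in the paper's setting, where $G$ is a $p$-adic analytic group), I may assume $W_0$ has compact closure with $\overline{W_0} \times K \subset \phi^{-1}(N)$. Set $L := \overline{W_0}^{-1} \cdot K$; this is compact, being the image of $\overline{W_0}^{-1} \times K$ under multiplication, and $L \subset N$, hence $f_0(L) \subset U$.

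Finally, take $\Omega := \cV(L, U) \cap \operatorname{ct-Ind}^G_P(V)$, an open neighborhood of $f_0$. For $g \in W_0$ and $f \in \Omega$, the inclusion $g^{-1} K \subset \overline{W_0}^{-1} K = L$ gives $(g \cdot f)(K) = f(g^{-1} K) \subset f(L) \subset U$, so $g \cdot f \in \cV(K, U)$ as required. The main subtlety is the use of local compactness of $G$: it is precisely what lets one replace the open tube by a relatively compact one, so that the compact-open neighborhood of $f_0$ can be formed against the compact set $L$; in greater generality, this direct approach only yields separate continuity.
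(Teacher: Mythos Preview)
Your argument is correct and complete: the tube-lemma step combined with local compactness of $G$ is the standard route to joint continuity of the left-translation action on $\cC(G,V)$ with the compact-open topology, and you are right to flag that without local compactness this direct approach only yields separate continuity.

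Your proof is considerably more careful than the paper's. The paper reduces to the ambient action on $\cC(G,V)$ and then asserts in one line that for any open $U\subset V$ the preimage of $\cC(G,U)$ under the action map is $G\times\cC(G,U)$, hence open. That set-theoretic identity is of course true (left translation preserves the condition $f(G)\subset U$), but as written it does not close the argument: the sets $\cC(G,U)$ do not form a subbase for the compact-open topology, and when $G$ is noncompact they are not even obviously open. Your argument, by contrast, works with the genuine subbasic opens $\cV(K,U)$ and uses the tube lemma plus local compactness to produce the required product neighborhood. So your proof actually establishes the proposition in the setting the paper cares about (locally $L$-analytic $G$, hence locally compact), whereas the paper's sketch should be read as an indication rather than a full proof.
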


\begin{proof}
It is reduced to the fact that the group action $G \times \cC(G, V) \to \cC(G, V)$ is continuous by \cite[Lem 3.1.1, Prop 3.1.5]{Eme17}.
\end{proof}

\begin{lem}\label{Ind ad}
If $V$ is a $K$-Banach representation of $P$, then $\operatorname{ct-Ind}^G_P(V)$ is a $K$-Banach representation of $G$.
Moreover, if both $P$ and $G$ are compact and $V$ is admissible (\cite{ST02I}, \cite{Eme17}), then $\operatorname{ct-Ind}^G_P(V)$ is admissible as a $K$-Banach representation of $G$.
\end{lem}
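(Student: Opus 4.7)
My plan is to reduce the first assertion to the compact case via the decomposition $G = G_0 \cdot P$, and then to deduce admissibility by embedding into a standard representation and invoking a closure argument.

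For the first part, the hypothesis $G_0/P_0 \xrightarrow{\sim} G/P$ gives simultaneously $G = G_0 \cdot P$ and $G_0 \cap P = P_0$. Restricting functions to $G_0$ therefore produces a well-defined bijection
\[ \operatorname{ct-Ind}^G_P(V) \xrightarrow{\sim} \operatorname{ct-Ind}^{G_0}_{P_0}(V), \]
whose inverse sends $f_0$ to $f(g_0 p) := p^{-1} \cdot f_0(g_0)$, well-defined because of $G_0 \cap P = P_0$, and continuous because $G_0 \times^{P_0} P \to G$ is a homeomorphism. Since $G_0$ is compact, $\cC(G_0, V)$ is a $K$-Banach space under the supremum norm, and the $P_0$-equivariance condition is pointwise closed, so the right-hand side is itself a Banach space. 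To identify this Banach topology with the subspace topology from $\cC(G, V)$, I would use that any compact subset of $G$ is contained in $G_0 \cdot C$ for some compact $C \subset P$, and then bound $\sup_{g_0 p \in G_0 C} |f(g_0 p)|_V$ by $\bigl(\sup_{p \in C} \|p^{-1}\|_{\End V}\bigr) \cdot \sup_{G_0} |f|_V$, which is finite by continuity of the $P$-action. Continuity of the $G$-action is then provided by the previous proposition.

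For the admissibility statement, which is in the case $G = G_0$, $P = P_0$ both compact, I would invoke the Schneider–Teitelbaum characterization that a Banach representation $V$ of $P$ is admissible if and only if there is a closed $P$-equivariant embedding $V \hookrightarrow \cC(P, K)^n$ for some $n$; this comes from dualizing a surjection $K[[P]]^n \twoheadrightarrow V'$ of finitely generated Iwasawa modules. The functor $\operatorname{ct-Ind}^G_P$ is left-exact and preserves closed subspaces: if $V \hookrightarrow W$ is closed, then $\{f \in \cC(G, W) : f(G) \subset V\}$ is closed by pointwise continuity, and intersecting with the equivariance condition preserves closure. Applying this, I would obtain a closed $G$-equivariant embedding
\[ \operatorname{ct-Ind}^G_P(V) \hookrightarrow \operatorname{ct-Ind}^G_P \bigl(\cC(P, K)^n\bigr) \simeq \cC(G, K)^n, \]
where the second identification is by evaluation at $e \in P$, provided the regular $P$-action on $\cC(P, K)$ is taken to match our sign convention $f(gp) = p^{-1} f(g)$. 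The target $\cC(G, K)^n$ is admissible since its continuous dual is the free module $K[[G]]^n$, and closed subrepresentations of admissible Banach representations are admissible because quotients of finitely generated $K[[G]]$-modules are finitely generated.

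The step I expect to cost the most care is verifying that the restriction-based Banach norm topology on $\operatorname{ct-Ind}^G_P V$ actually coincides with the subspace topology inherited from $\cC(G, V)$ with the compact-open topology; everything downstream — particularly the closed embedding into $\cC(G,K)^n$ in the admissibility argument — depends on this identification being a topological isomorphism, not merely an algebraic one. Beyond that, the argument is largely a matter of matching the conventions for the regular representation in the identification $\operatorname{ct-Ind}^G_P \cC(P, K) \simeq \cC(G, K)$ and then appealing to standard closure properties of the admissible category.
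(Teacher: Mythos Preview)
Your proposal is correct and follows essentially the same approach as the paper: reduce to the compact situation via the restriction isomorphism $\operatorname{ct-Ind}^G_P(V)\simeq\operatorname{ct-Ind}^{G_0}_{P_0}(V)$, and for admissibility invoke the closed embedding $V\hookrightarrow\cC(P_0,K)^n$ from \cite[Prop/Def~6.2.3]{Eme17} and the identification $\operatorname{ct-Ind}^{G_0}_{P_0}\cC(P_0,K)\simeq\cC(G_0,K)$. Your write-up is in fact more careful than the paper's, which simply asserts the topological isomorphism without the compact-open versus sup-norm comparison you flag.
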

\begin{proof}
There is a topological isomorphism $\operatorname{ct-Ind}^{G_0}_{P_0}(V) \xrightarrow{\sim} \operatorname{ct-Ind}^G_P(V)$.
When $V$ is Banach, the topology of $\cC(G_0, V)$ coincides with the maximal norm, where $\cC(G_0, V)$ is Banach.
By \cite[Prop/Def 6.2.3]{Eme17}, if $V$ is admissible then there exists a $P_0$-equivariant closed embedding $V \to \cC(P_0, K)^n$ for some $n$.
The admissibility is therefore guaranteed by the $G_0$-equivariant closed embedding $\operatorname{ct-Ind}^{G_0}_{P_0}(V) \to \operatorname{ct-Ind}^{G_0}_{P_0} (\cC(P_0, K)^n) \simeq \cC(G_0, K)^n$. 
\end{proof}

\begin{prop}\label{ind projlim}
The continuous induction functor commutes with projective limits. Equivalently, 
$$\operatorname{ct-Ind}^G_P(\varprojlim\limits_{i} V_i) \simeq \varprojlim\limits_{i} \operatorname{ct-Ind}^G_P(V_i)$$ for a projective system $(V_i)$ of topological $\cO$-modules with continuous $P$ action.
\end{prop}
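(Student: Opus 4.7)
The plan is to deduce this from Lemma~\ref{top projlim} by checking that the subspace cut out by the induction condition is preserved under the identification, both as a set and as a topological space.

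First, I would apply Lemma~\ref{top projlim} with $X = G$ to obtain a canonical topological isomorphism $\cC(G, \varprojlim_i V_i) \xrightarrow{\sim} \varprojlim_i \cC(G, V_i)$. By definition, $\operatorname{ct-Ind}^G_P(V)$ sits as a subspace of $\cC(G,V)$, so the task reduces to matching subspaces on both sides under this isomorphism.

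Next, I would verify the set-theoretic identification. Let $\pi_i : \varprojlim_j V_j \to V_i$ denote the canonical projection, which by the definition of a projective system in our category is a continuous $P$-equivariant map. I claim a continuous function $f : G \to \varprojlim_i V_i$ satisfies $f(gp) = p^{-1}\cdot f(g)$ for all $g \in G, p \in P$ if and only if each composite $\pi_i \circ f$ satisfies the same rule. The forward direction is immediate from $P$-equivariance of $\pi_i$; the reverse follows because the family $\{\pi_i\}$ jointly separates points of $\varprojlim_i V_i$, so the identity $f(gp) = p^{-1}\cdot f(g)$ can be tested after projecting to each index. Thus the bijection of Lemma~\ref{top projlim} restricts to a bijection $\operatorname{ct-Ind}^G_P(\varprojlim_i V_i) \leftrightarrow \varprojlim_i \operatorname{ct-Ind}^G_P(V_i)$.

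Finally, I would compare topologies. The left-hand side carries the subspace topology from $\cC(G, \varprojlim_i V_i) \simeq \varprojlim_i \cC(G, V_i)$; a basis of opens is given by $\cV(K, \pi_i^{-1}(U_i)) \cap \operatorname{ct-Ind}^G_P(\varprojlim V_i)$ for $K \subset G$ compact and $U_i \subset V_i$ open, which is precisely the preimage under the map to $\operatorname{ct-Ind}^G_P(V_i)$ of $\cV(K, U_i) \cap \operatorname{ct-Ind}^G_P(V_i)$. This matches the projective limit topology on the right-hand side, since projective limits of topological $\cO$-modules are realized as subspaces of products and hence are compatible with further subspace formation. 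I do not expect a real obstacle here; the argument is formal once Lemma~\ref{top projlim} is in place, and the only point requiring any care is the $P$-equivariance of the transition/projection maps in Step~2, which is built into the definition of a projective system of $P$-representations.
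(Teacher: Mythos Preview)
Your argument is correct, but it proceeds differently from the paper's. The paper invokes the analytic section $G/P \times P \xrightarrow{\sim} G$ from \cite[4.1.1]{Fea99} to identify $\operatorname{ct-Ind}^G_P(V) \simeq \cC(G/P, V)$ as topological $\cO$-modules for every $V$, and then applies Lemma~\ref{top projlim} with $X = G/P$ directly, so that the whole statement becomes a one-line consequence. You instead apply Lemma~\ref{top projlim} with $X = G$ and then check by hand that the induction condition cuts out matching subspaces on both sides, with matching topologies. Your route avoids any use of the locally analytic structure and the F\'eaux de Lacroix splitting, so it actually proves the statement in the generality of an arbitrary topological group $G$ with closed subgroup $P$; the price is the extra bookkeeping in your last paragraph about compatibility of subspace and limit topologies. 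The paper's route is shorter and makes the isomorphism transparently functorial in $V$, but it genuinely uses the ambient hypotheses on $G$ and $P$ that were set up just before the proposition.
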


\begin{proof}
By \cite[4.1.1]{Fea99}, the map $G \to G/P$ admits an analytic splitting $\imath: G/P \times P \to G$ as locally analytic manifolds.
If $V$ is a topological $\cO$-module with continuous $P$ action, $\operatorname{ct-Ind}^G_P(V) \simeq \cC(G/P, V)$ as topological $\cO$-modules.
The claim then follows from Lemma \ref{top projlim}.
\end{proof}

\begin{lem}
Any locally analytic representation of $G$ on a vector space of compact type over $K$ restricts to a locally analytic representation of $P$.
\end{lem}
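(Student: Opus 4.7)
The plan is to use the characterization of a locally analytic representation on a space of compact type in terms of its orbit maps, and then observe that this condition is preserved under restriction along the analytic inclusion $P \hookrightarrow G$.

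First I would recall that by \cite[Def.~3.6.9]{Eme17}, a separately continuous representation of $G$ on a compact type $K$-vector space $V$ is locally analytic precisely when, for every $v \in V$, the orbit map $o_v \colon G \to V$, $g \mapsto g \cdot v$, is a $V$-valued locally analytic function on $G$, i.e.\ lies in $C^{\la}(G,V)$. The claim to be proved thus reduces to verifying the analogue of this condition for $P$ in place of $G$.

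Next, since $P$ is an analytic subgroup of $G$, the inclusion $\iota \colon P \hookrightarrow G$ is a morphism of locally $L$-analytic manifolds. Pullback of $V$-valued locally analytic functions along any morphism of locally $L$-analytic manifolds preserves local analyticity, which is immediate from the definition of $C^{\la}(X,V)$ recalled in \S\ref{notation} (compare \cite[\S 2.1]{Eme17}). Applying this to each orbit map, the composition $o_v \circ \iota \colon P \to V$ is locally analytic, and this composition is exactly the $P$-orbit map $p \mapsto p \cdot v$. Separate continuity of the restricted action $P \times V \to V$ is inherited from the continuous $G$-action, and $V$ is unchanged as a compact type $K$-vector space. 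Together these verify all of the defining conditions of \cite[Def.~3.6.9]{Eme17} for the restricted $P$-action, so $V$ is a locally analytic $P$-representation.

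The main obstacle: there is essentially none, as the statement is formal once the definitions are unpacked. The only point that would merit a careful check is that ``analytic subgroup'', as used in the paper, genuinely guarantees that $\iota \colon P \hookrightarrow G$ is a morphism of locally $L$-analytic manifolds (rather than merely a continuous inclusion of topological groups), so that pullback indeed preserves local analyticity; this is exactly how the term is used in the setup following Lemma \ref{top projlim}, and is consistent with the analytic splitting $G/P \times P \xrightarrow{\sim} G$ from \cite[4.1.1]{Fea99} invoked in Proposition \ref{ind projlim}.
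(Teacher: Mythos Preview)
Your proposal is correct and follows essentially the same route as the paper: the paper's proof also argues via the orbit map and the fact that locally analytic functions pull back along the analytic inclusion $P\hookrightarrow G$. The only cosmetic difference is that the paper phrases the orbit-map criterion as a single embedding $V\hookrightarrow C^{\la}(G,K)\,\hat{\otimes}\,V$ (using $C^{\la}(G,V)\simeq C^{\la}(G,K)\,\hat{\otimes}\,V$ for $V$ of compact type) rather than checking each $o_v$ individually, but the content is identical.
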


\begin{proof}
If $V$ is such a representation of $G$, then the orbit map gives us $V \hookrightarrow C^{\la}(G, K) \hat{\otimes} V$.
Locally analytic functions on $G$ pull back to locally analytic functions on $P$, there is a induced orbit map $V \hookrightarrow C^{\la}(P, K) \hat{\otimes} V$ for $P$.
\end{proof}

For a locally analytic representation $V$ of $P$, we have the locally analytic induction as follows (\cite[\S 4.1.1]{Fea99}):
$$\operatorname{la-Ind}^G_P(V):=\{f: G \to V | f ~ \text{locally analytic}, ~ f(gp)=p^{-1}\cdot f(g), ~ \forall p \in P, g\in G\}.$$
\begin{thm}\label{Ind la}
Let $V$ be a $K$-Banach space equipped with a continuous $P$-action.
The locally analytic vectors of $\operatorname{ct-Ind}^G_P(V)$ are identified with the locally analytic induction of locally analytic vectors of $V$ as a topological isomorphism:
$$(\operatorname{ct-Ind}^G_P(V))^{\la} \simeq \operatorname{la-Ind}^G_P(V^{\la}).$$
\end{thm}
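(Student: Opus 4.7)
The plan is to reduce to the compact case, convert the continuous induction to a twisted function space on $G/P$ via an analytic section, construct the natural bijection, and then upgrade it to a topological isomorphism by working with the filtration of $V^{\la}$ by rigid analytic subgroups.

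First I would reduce to the case $G = G_0$, $P = P_0$. Since $G_0/P_0 \xrightarrow{\sim} G/P$, restriction of functions gives $G_0$-equivariant topological isomorphisms $\operatorname{ct-Ind}^G_P(V) \simeq \operatorname{ct-Ind}^{G_0}_{P_0}(V)$ and $\operatorname{la-Ind}^G_P(V^{\la}) \simeq \operatorname{la-Ind}^{G_0}_{P_0}(V^{\la})$, and the formation of locally analytic vectors is intrinsic to the $G_0$-action. I would then use the locally analytic section $s : G/P \to G$ from \cite[4.1.1]{Fea99} to identify $\operatorname{ct-Ind}^G_P(V)$ with $\cC(G/P, V)$ via $f \mapsto (x \mapsto f(s(x)))$, where the $G$-action becomes $(g \cdot \tilde f)(x) = c(g,x)^{-1}\tilde f(g^{-1}x)$ for the locally analytic $P$-valued cocycle $c(g,x) := s(g^{-1}x)^{-1}g^{-1}s(x)$. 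Analogously, $\operatorname{la-Ind}^G_P(V^{\la}) \simeq C^{\la}(G/P, V^{\la})$.

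Next I would construct the two arrows. For the forward direction, the continuous inclusion $V^{\la} \hookrightarrow V$ sends $C^{\la}(G/P, V^{\la})$ continuously into $\cC(G/P, V)$; the image consists of locally analytic vectors because the twisted action is a composition of the locally analytic cocycle $c$, the locally analytic $P$-action on $V^{\la}$, and translation on $G/P$. For the reverse direction, given $\tilde f \in (\operatorname{ct-Ind}^G_P(V))^{\la}$, composition of the locally analytic orbit map $G \to \operatorname{ct-Ind}^G_P(V)$ with evaluation at the identity produces a locally analytic $V$-valued function, so $\tilde f$ itself is locally analytic as a $V$-valued function on $G$. The covariance $\tilde f(gp) = p^{-1}\tilde f(g)$ then implies that for each fixed $g$, the map $p \mapsto p \cdot \tilde f(g) = \tilde f(gp^{-1})$ is locally analytic on $P$, which says $\tilde f(g) \in V^{\la}$.

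The hard part will be upgrading this set-theoretic bijection to a topological isomorphism, in particular showing that $\tilde f$ is locally analytic with values in $V^{\la}$ for its inductive limit topology, rather than merely pointwise. I plan to handle this via the description $V^{\la} = \varinjlim_{\mathbb H} V_{\mathbb H\text{-}\an}$ indexed by good rigid analytic open subgroups $\mathbb H$ of $P$, and an analogous filtration on the induction side. For a good rigid analytic open subgroup $\mathbb G \subset G$ chosen small enough that $c(g,x) \in \mathbb H$ whenever $g \in \mathbb G$ and $x$ is in a suitable neighborhood of a base coset, I would match the $\mathbb G$-analytic vectors of $\operatorname{ct-Ind}^G_P(V)$ with $\mathbb G$-analytic functions $G/P \to V_{\mathbb H\text{-}\an}$ satisfying the covariance relation. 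Passing to the inductive limit as $\mathbb G$ (and hence $\mathbb H$) shrinks yields the theorem; the delicate point throughout is matching norms on both sides so that the resulting isomorphism of inductive limits is strict.
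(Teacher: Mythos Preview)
Your proposal is correct and follows essentially the same strategy as the paper: both arguments reduce the topological isomorphism to matching $\HH$-analytic vectors on each side for rigid analytic open subgroups $\HH$ of $G$, then pass to the inductive limit. The presentational difference is that you trivialize via an analytic section and track a $P$-valued cocycle on $G/P$, whereas the paper stays inside $C(G,V)$ and uses the tensor decomposition
\[
C^{\an}(\HH \times \HH_P, V) \;\simeq\; C^{\an}(\HH, C^{\an}(\HH_P, V))
\]
(with $\HH_P$ the rigid group underlying $H \cap P$) to see directly that an $\HH$-analytic $f \in \cI$ has values in $V \cap C^{\an}(\HH_P,V) = V_{\HH_P-\an}$. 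The paper then concludes via $V^{\la}_{\HH_P-\an} \simeq V_{\HH_P-\an}$ \cite[Prop.~3.4.14]{Eme17} and $(\operatorname{la-Ind}^G_P(V^{\la}))^{\la} \simeq \operatorname{la-Ind}^G_P(V^{\la})$ \cite[Thm.~3.6.12]{Eme17}. Your ``matching norms'' step is precisely where these two references do the work, and you should invoke them explicitly; the cocycle formulation buys you nothing extra here, while the tensor decomposition avoids having to control how the section interacts with the rigid analytic charts.
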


\begin{proof}
We adapt some notation in \cite{Eme17}, and use $\cI$ to denote $\operatorname{ct-Ind}^G_P(V)$.
By \cite[\S 3.5]{Eme17}, there is a continuous $P$-equivariant injection of locally convex Hausdorff $K$-spaces $V^{\la} \hookrightarrow V$.
We know $\cI$ is a $K$-Banach closed subrepresentation of the space of continuous $V$-valued functions $C(G, V)$ on $G$.
By \cite[\S 4.1.1]{Fea99}, $\operatorname{la-Ind}^G_P(V^{\la})$ is a closed subspace of $C^{la}(G, V^{\la})$.
There is a natural continuous injection $\operatorname{la-Ind}^G_P(V^{\la}) \hookrightarrow \cI$, following from continuous injections by \cite[Prop. 2.1.7, Prop. 2.1.26]{Eme17}, $$C^{\la}(G,V^{\la}) \hookrightarrow C(G,V^{\la}) \hookrightarrow C(G,V).$$
Let $H$ be an analytic open subgroup of $G$, and let $\HH$, $\HH_P$ be rigid analytic group corresponding to $H$ and $H \cap P$.
Any group element $g \in G$ defines a map $E_g : \cI \to V$ sending $f \in \cI$ to $f(g) \in V$.
Consider $\cI_{\HH-\an} \to C^{\an}(\HH, \cI) \xrightarrow{E_g} C^{\an}(\HH, V)$, for any $\HH$-analytic vector $f \in \cI_{\HH-\an}: G \to V$, as a map it is $\HH$-analytic by evaluating at different points.
Therefore the map $H \times (P \cap H) \xrightarrow{m} H \xrightarrow{f} V$ is $\HH \times \HH_P$-analytic. 
Such a map is represented as an element in 
\begin{equation}\label{aniso}
\begin{split}
C^{\an}(\HH \times \HH_P, V) & \simeq C^{\an}(\HH \times \HH_P, K) \hat{\otimes} V \\
& \simeq C^{\an}(\HH, K) \hat{\otimes} C^{\an}(\HH_P, K) \hat{\otimes} V\\
& \simeq C^{\an}(\HH, C^{\an}(\HH_P, V)), 
\end{split}
\end{equation}
we know that $V_{\HH_P - \an}$ is a closed subspace of $C^{\an}(\HH_P, V)$.
Both $V$ (with orbit map) and $C^{\an}(\HH_P, V)$ are embedded into $C(\HH_P, V)$.
For $f \in \cI_{\HH-\an}: G \to V$, we have $f(gp)=p^{-1}f(g)$ for $p \in P$ from definition. 
Evaluating at identity of $P \cap H$, the target of $f$ lies in $V \cap C^{\an}(\HH_P, V) = V_{\HH_P - \an}$ by \cite[Thm 3.3.16]{Eme17}.
$f$ induces an analytic map $H \to V_{\HH_p - \an}$ by (\ref{aniso}). 
We can claim the topological isomorphism $V^{\la}_{\HH_P - \an} \simeq V_{\HH_P - \an}$ form \cite[Prop 3.4.14]{Eme17}.
Then by the same argument we know $(\operatorname{la-Ind}^G_P(V^{\la}))_{\HH - \an} \xrightarrow{\sim} \cI_{\HH - \an}$, hence $(\operatorname{la-Ind}^G_P(V^{\la}))^{\la} \xrightarrow{\sim} \cI^{\la}$.
We finally win by applying \cite[Thm 3.6.12]{Eme17}.
\end{proof}

Recall that from notation $\mathrm{Ban}_G(K)$ is the category of $K$-Banach representation of $G$, and $\mathrm{Ban}^\ad_G(K)$ is the abelian category of admissible continues $K$-Banach representations of $G$ considered in \cite[\S 6.2]{Eme17}.
We use $\mathrm{Rep}^\mathrm{a}_K(G)$ to denote the abelian category of admissible locally analytic representations over $K$ in \cite[\S 6.1]{Eme17} (and equivalently in \cite[\S 6]{ST03} in the compact case).
We use $\replag$ to denote the category of locally analytic $G$ representations on compact type locally convex topological $K$-vector spaces.

\begin{lem}\label{en inj}
If $G$ is compact, $\mathrm{Ban}^\ad_G(K)$ has enough injective objects.
\end{lem}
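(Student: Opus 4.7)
The plan is to invoke the Schneider--Teitelbaum duality, which identifies $\mathrm{Ban}^{\mathrm{ad}}_G(K)$ with the opposite of the abelian category $\mathrm{Mod}^{\mathrm{fg}}(K[[G]])$ of finitely generated modules over the completed group algebra $K[[G]] := \cO_K[[G]]\otimes_{\cO_K} K$, via the passage-to-dual functor $V \mapsto V'_b$. Since $G$ is compact $p$-adic analytic, Lazard's theorem guarantees that $\cO_K[[G]]$ (and hence $K[[G]]$) is Noetherian, so the subcategory $\mathrm{Mod}^{\mathrm{fg}}(K[[G]])$ is stable under kernels, cokernels, and extensions. Producing enough injectives in $\mathrm{Ban}^{\mathrm{ad}}_G(K)$ is then the same as producing enough projectives in $\mathrm{Mod}^{\mathrm{fg}}(K[[G]])$.

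The latter is automatic: any finitely generated module $M$ admits a surjection $K[[G]]^n \twoheadrightarrow M$ from a finite free module, and $K[[G]]^n$ is projective inside $\mathrm{Mod}^{\mathrm{fg}}(K[[G]])$ since Noetherianity keeps the lifting problem within this subcategory. Unwinding this at the level of representations, under the duality the free module $K[[G]]$ corresponds to the continuous regular representation $\cC(G, K) \simeq \operatorname{ct-Ind}^G_{\{e\}} K$, which is an admissible Banach representation by Lemma \ref{Ind ad}. A choice of generators of $V'_b$ dualizes to a closed $G$-equivariant embedding $V \hookrightarrow \cC(G, K)^n$, which is exactly the characterization of admissibility from \cite[Prop/Def 6.2.3]{Eme17} already invoked in Lemma \ref{Ind ad}. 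The target $\cC(G, K)^n$ is injective in $\mathrm{Ban}^{\mathrm{ad}}_G(K)$ because under the anti-equivalence it corresponds to the projective $K[[G]]^n$.

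The main point that needs care is the precise content of the Schneider--Teitelbaum anti-equivalence: one must confirm it exchanges monomorphisms with epimorphisms and preserves exactness, so that projectivity on the module side really does transport to injectivity on the representation side. This is the statement of \cite{ST02I}, so nothing new is needed. Alternatively, one can bypass the duality and prove injectivity of $\cC(G, K)$ directly: Frobenius reciprocity reduces the lifting problem to extending a continuous linear functional through a closed embedding of $K$-Banach spaces, which is the content of the non-archimedean Hahn--Banach theorem.
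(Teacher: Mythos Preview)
Your proof is correct and follows the same approach as the paper: invoke the Schneider--Teitelbaum anti-equivalence between $\mathrm{Ban}^{\mathrm{ad}}_G(K)$ and finitely generated $K[[G]]$-modules, then use that the latter category has enough projectives. The paper states this in a single line citing \cite[Thm 3.5]{ST02I}; your additional remarks on Noetherianity, the explicit identification of the injectives with $\cC(G,K)^n$, and the alternative Hahn--Banach route are correct elaborations but not needed for the argument.
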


\begin{proof}
If $G$ is compact, $\mathrm{Ban}^\ad_G(K)$ is anti-equivalent to the category of finitely generated (left unital) $K[[G]]$-modules via duality by \cite[Thm 3.5]{ST02I}, which has enough projective objects.
\end{proof}

As it is remarked in the (end of) proof of \cite[Prop. 2.1.2]{Eme07}, continuous parabolic induction preserves admissibility for $p$-adic reductive groups.

Now fix a connected quasi-split reductive linear algebraic group $\GG$ over a $p$-adic field $L$, as well as a Borel pair $(\BB, \NN)$ with Levi decomposition $\BB = \TT \NN$. Let $\overline{\NN}$ be the opposite unipotent of $\NN$.
Write $G = \GG(L)$, $B = \BB(L)$, $T = \TT(L)$, and $N = \NN(L)$, $\overline{N} = \overline{\NN}(L)$.
Let $G_0$ be a compact open subgroup of $G$ and $N_0 := G_0 \cap N$, $\overline{N}_0 := G_0 \cap \overline{N}$, $T_0 := G_0 \cap T$. 
Let $T^+$ be the submonoid of $T$ which stabilizes the closed subgroup $N_0 \subset G$.

Let $\reptopt$ be the additive category whose objects are Hausdorff locally convex $K$-vector spaces of compact type, equipped with a topological action of $T^+$, and whose morphisms are continuous $T^+$-equivariant $K$-linear maps (\cite[\S 3.1]{Eme06A}, by a “topological action” of $T^+$ on a
topological vector space $V$ we mean an action of $T^+$ on $V$ by continuous endomorphisms).
Consider the following left exact functor $\Gamma^{\la}_{N_0}: \mathrm{Ban}^\ad_G(K) \to \mathrm{Rep}^\mathrm{top.c}_K(T^+)$ defined as $\Gamma_{N_0} \circ \mathrm{la}$, where 
\begin{eqnarray*}
\mathrm{la}: \mathrm{Ban}^\ad_G(K) & \to & \mathrm{Rep}^\mathrm{a}_K(G) \\
V & \mapsto & V^{\la}
\end{eqnarray*}
is the exact functor of passing to locally analytic vectors (\cite[Thm 7.1]{ST03}). 
We define Hecke action $T^+$ on $V^{N_0}$ to be 
\[ t\cdot v := [N_0:tN_0t^{-1}]^{-1} \sum_{n \in [N_0:tN_0t^{-1}]} ntv, ~\for ~ \forall t \in T^+, \forall v \in V^{N_0}.\]
We refer to \cite[\S 3.4]{Eme06A} for the Hecke action of $T^+$.
\begin{eqnarray*}
\Gamma_{N_0}: \mathrm{Rep}^\mathrm{a}_K(G) & \to & \reptopt \\
V & \mapsto & V^{N_0}
\end{eqnarray*} is the left exact functor of taking $N_0$-invariants.

If $V \in \mathrm{Ban}^\ad_{G_0}(K)$, then $V^\la$ is a $D(G_0, K)$ module by \cite[Prop 3.2]{ST02J}.
\begin{lem}\label{la acyc}
Let $\frakn$ be the Lie algebra of $N_0$ over $L$.
If $I$ is an injective object in $\mathrm{Ban}^\ad_{G_0}(K)$, then $H^i(\frakn, I^\la)$ vanish for $i \geq 1$.
\end{lem}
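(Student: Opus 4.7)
\emph{Plan.} By \cite[Thm 3.5]{ST02I} (as already invoked in Lemma \ref{en inj}), $\mathrm{Ban}^\ad_{G_0}(K)^{op}$ is equivalent to the category of finitely generated $K[[G_0]]$-modules via the continuous dual. Under this equivalence the injective object $I$ corresponds to a projective finitely generated $K[[G_0]]$-module, which is a direct summand of some $K[[G_0]]^n$. Dualizing back, $I$ is a direct summand of the admissible Banach representation $\cC(G_0,K)^n$. Since passage to locally analytic vectors is an exact additive functor and $\frakn$-cohomology commutes with finite direct sums and preserves summands, it suffices to prove $H^i(\frakn, C^\la(G_0,K)) = 0$ for $i \geq 1$.

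The Lie algebra $\frakn$ acts on $C^\la(G_0,K)$ by invariant derivations coming from the $G_0$-action (left-invariant, if we dualize the left regular action on $K[[G_0]]$), and the quotient map $G_0 \to G_0/N_0$ is an $N_0$-torsor of locally $L$-analytic manifolds. Since $G_0/N_0$ is compact and totally disconnected, we may choose a finite disjoint open analytic cover $G_0/N_0 = \coprod_j V_j$ together with analytic sections $V_j \to G_0$, producing an analytic isomorphism $G_0 \simeq \coprod_j V_j \times N_0$ compatible with the $N_0$-action. This yields a topological isomorphism
\[ C^\la(G_0,K) \simeq \bigoplus_j C^\la(V_j,K) \hat\otimes_K C^\la(N_0,K) \]
as topological $\frakn$-modules, with $\frakn$ acting only on the second tensor factor. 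Hence the Chevalley-Eilenberg complex factors as $\bigoplus_j C^\la(V_j,K) \hat\otimes_K \CE^\bullet(\frakn, C^\la(N_0,K))$, further reducing the claim to the vanishing $H^i(\frakn, C^\la(N_0,K)) = 0$ for $i \geq 1$.

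Since $N_0$ is open compact in the unipotent group $N$, the exponential map identifies it analytically with a polydisk in $\frakn$ over some $\cO_L$-lattice. Under this identification $\CE^\bullet(\frakn, C^\la(N_0,K)) \simeq \Lambda^\bullet \frakn^* \otimes_K C^\la(N_0,K)$ is the complex of left-invariant analytic forms on $N_0$ with locally analytic coefficients, and its positive-degree cohomology vanishes by the $p$-adic analytic Poincar\'e lemma (applied level by level along the LB presentation of $C^\la(N_0,K)$ by $\an$-functions of increasing radius). The main technical point to check is that the decomposition in the second paragraph does give a genuine isomorphism of Chevalley-Eilenberg complexes of topological $\frakn$-modules, compatibly with completed tensor products; once this is verified, the Poincar\'e lemma step is classical.
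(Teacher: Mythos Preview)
Your proposal is correct and follows essentially the same route as the paper: reduce via Schneider--Teitelbaum duality to $I$ being a summand of $C(G_0,K)^r$, pass to locally analytic vectors, split $C^\la(G_0,K)$ as $C^\la(N_0,K)$ tensored with functions on the base, and conclude by the $p$-adic Poincar\'e lemma (the paper cites \cite[Prop.~3.1]{ST05} and \cite[VII.1.1]{BW80} for this last step). The only cosmetic difference is that the paper asserts a global analytic splitting $G_0 \simeq N_0 \times (N_0\backslash G_0)$ directly, whereas you more cautiously work with a finite disjoint cover of $G_0/N_0$ and local analytic sections; either is fine here, and your technical caveat about compatibility with completed tensor products is harmless since the Poincar\'e lemma furnishes an explicit continuous contracting homotopy on $\CE^\bullet(\frakn, C^\la(N_0,K))$, which survives $\hat\otimes_K$ with any space of compact type.
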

\begin{proof}
Let $I^\ast$ be the corresponding finitely generated $K[[G_0]]$-module, $I^\ast$ is projective.
There exists a complement $M^\ast$ such that $I^\ast \oplus M^\ast \simeq K[[G_0]]^r$ for some rank $r \in \ZZ_{\geq 1}$.
As the dual of $K[[G_0]]$ is space of continuous functions $C(G_0, K)$ on $G$, there exists a closed complement $M$ such that $I \oplus M \simeq C(G_0,K)^r$.
By taking locally analytic vectors, $I^\la \oplus M^\la \simeq C^\la(G_0, K)^r$.

As locally analytic manifolds, $G_0$ decomposes as $N_0 \times N_0 \backslash G_0$, $$C^\la(G_0, K) \simeq C^\la(N_0, K) \hat{\otimes}_K C^\la(N_0 \backslash G_0, K).$$
Since $\bullet \otimes_K C^\la(N_0 \backslash G_0, K)$ preserves exactness of locally convex $K$-vector spaces, so does $\bullet \hat{\otimes}_K C^\la(N_0 \backslash G_0, K)$ by Prop \ref{HS comp exact}.
It suffices to prove $H^i(\frakn, C^\la(G_0, K))$ vanish for $i \geq 1$, which turns out to be the Poincar\'e lemma (Proof of \cite[Prop 3.1]{ST05} as a $p$-adic analogue of \cite[VII.1.1]{BW80}).
\end{proof}

\begin{cor}
Let $\for$ be the forgetful functor \[ \for: \reptopt \to \mathrm{Mod}(K[T^+]). \] For a complex $C_\bullet \in \bK^b(\mathrm{Ban}^\ad_{G_0}(K))$, we choose an injective resolution $C_\bullet \to \cI_\bullet$ (the existence follows from Lem \ref{en inj}). 
We have $R^i (\for \circ \Gamma^\la_{N_0}) (\cI_\bullet)$ (derived functors for $\for \circ \Gamma^\la_{N_0}$) coincide with $\Ext^i_{D(N_0, K)}(\mathds{1}, \cI^\la_\bullet)$ (derived functors of Hom functor) as abstract $K[T^+]$-modules.
\end{cor}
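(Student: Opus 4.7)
The plan is to reduce everything to an acyclicity statement that can be read off from Lemma \ref{la acyc}. Since $\mathrm{la}: \contadg \to \repag$ is exact by \cite[Thm 7.1]{ST03}, the chosen injective resolution $C_\bullet \to \cI_\bullet$ in $\contadg$ yields, upon termwise application of $\mathrm{la}$, a quasi-isomorphism $C^\la_\bullet \to \cI^\la_\bullet$ of bounded complexes of locally analytic $G_0$-representations. Consequently the hyper-derived functor $R^i(\for \circ \Gamma^\la_{N_0})(C_\bullet)$ computed via $\cI_\bullet$ agrees with the $i$-th hyper-derived functor of $\for \circ \Gamma_{N_0}$ applied to $\cI^\la_\bullet$, viewed in the category of locally analytic representations.

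Under the Schneider--Teitelbaum anti-equivalence \cite{ST02J}, locally analytic representations of $N_0$ on compact type spaces correspond to separately continuous $D(N_0, K)$-modules, and under this correspondence the functor of $N_0$-invariants is naturally identified with $\Hom_{D(N_0, K)}(\mathds{1}, -)$, whose right-derived functor is by definition $\Ext^\bullet_{D(N_0, K)}(\mathds{1}, -)$. It therefore suffices to prove that, for each injective $\cI$ in $\contadg$, the locally analytic vectors $\cI^\la$ are acyclic for $\Ext^\bullet_{D(N_0, K)}(\mathds{1}, -)$; once this is established, both sides of the claimed identity are computed by the single complex $(\cI^\la_\bullet)^{N_0}$ equipped with its $T^+$-Hecke action, so the answers must agree as abstract $K[T^+]$-modules.

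For the acyclicity, choose $N_0$ to be a good (uniform pro-$p$) analytic subgroup, so that $D(N_0, K)$ is Fr\'echet--Stein and the Chevalley--Eilenberg complex $D(N_0, K) \otimes_K \wedge^\bullet \frakn$ provides a free $D(N_0, K)$-resolution of $\mathds{1}$. Computing $\Ext^i_{D(N_0, K)}(\mathds{1}, \cI^\la)$ using this resolution identifies it with $H^i(\frakn, \cI^\la)$, which vanishes for $i \geq 1$ by Lemma \ref{la acyc}.

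The step I expect to be the main obstacle is verifying that all of these identifications are equivariant for the $T^+$-Hecke action, since that action is defined by the averaging formula involving $[N_0 : t N_0 t^{-1}]$ rather than being intrinsic to the $D(N_0, K)$-module structure alone. One checks this by noting that the Hecke operator $[N_0 t N_0]$ is given on any locally analytic representation $V$ by the same averaged formula, that this formula commutes with the passage to $\la$-vectors and with $N_0$-invariants, and that on $\Ext^\bullet_{D(N_0, K)}(\mathds{1}, -)$ it is induced by conjugation of the Chevalley--Eilenberg resolution by the adjoint action of $t$ on $\frakn$ together with the correction factor $[N_0 : t N_0 t^{-1}]^{-1}$; the resulting actions coincide by functoriality of the derived functor construction in both presentations.
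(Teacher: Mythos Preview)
Your argument follows essentially the same route as the paper: both reduce the claim to showing that $\Ext^i_{D(N_0,K)}(\mathds{1}, I^{\la})$ vanishes for $i\geq 1$ whenever $I$ is injective in $\contadg$, and both deduce this vanishing from the identification with Lie algebra cohomology together with Lemma~\ref{la acyc}. The paper packages the identification $\Ext^i_{D(N_0,K)}(\mathds{1}, V^{\la}) \simeq H^i(\frakn, V^{\la})^{N_0}$ as a citation to Kohlhaase \cite[Thm~4.8, Thm~4.10]{Koh11}, whereas you unpack it via the Chevalley--Eilenberg resolution; and you add an explicit discussion of $T^+$-equivariance that the paper leaves implicit in the phrase ``both delta-functors calculate cohomology of $(\cI_\bullet^{\la})^{N_0}$.''

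One point to tighten: you write ``choose $N_0$ to be a good (uniform pro-$p$) analytic subgroup,'' but $N_0 = G_0 \cap N$ is fixed by the setup and need not itself be uniform. The Chevalley--Eilenberg complex $D(N_0,K)\otimes_K \wedge^\bullet\frakn$ is a free resolution of $\mathds{1}$ only under such hypotheses on $N_0$; in general one gets the Hochschild--Serre-type spectral sequence of \cite[Thm~4.8]{Koh11}, and it is the collapse of that spectral sequence (Thm~4.10 there) that yields the identification with $H^i(\frakn,-)^{N_0}$. This is exactly what the paper's citation is doing for you, so either invoke Kohlhaase directly or reduce to a uniform open subgroup of $N_0$ and then run a group-cohomology spectral sequence for the finite quotient (which is harmless for the vanishing in degrees $\geq 1$).
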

\begin{proof}
By \cite[Thm 4.8, Thm 4.10]{Koh11}, $$\Ext^i_{D(N_0, K)}(\mathds{1}, V^\la) \simeq H^i(\frakn, V^\la)^{N_0}$$ for any $V \in \mathrm{Ban}^\ad_{G_0}(K)$. 
Therefore  $\Ext^i_{D(N_0, K)}(\mathds{1}, I^\la)$ vanish for any injective object $I$ in $\mathrm{Ban}^\ad_{G_0}(K)$ and $i \geq 1$ by Lem \ref{la acyc}.
Both delta-functors are effaceable and calculate cohomology of $(\cI_\bullet^\la)^{N_0}$.
\end{proof}

\begin{df}\label{Gaction}
Suppose $C_\bullet \in \bK^b(\contadg)$. 
By a $G$-extension of $C$ we mean the data $(C', i, p)$ such that $C'_\bullet \in \bK^b(\mathrm{Ban}_G(K))$, with a $G_0$-equivariant homotopy equivalence $C_\bullet \stackrel[i]{p}{\leftrightarrows} C'_\bullet$ in $\bK^b(\mathrm{Ban}_{G_0}(K))$. 
\end{df}

In particular, $H^\ast(p \circ i) = \mathrm{id}$, $H^\ast(i \circ p) = \mathrm{id}$.

If $C_{1,\bullet}, C_{2,\bullet}$ are two homotopic equivalent objects in $\bC\bh^b(\contadg)$ and $C'_{1,\bullet}, C'_{2,\bullet}$ are two homotopic equivalent objects in $\bC\bh^b(\mathrm{Ban}_G(K))$, then a given $G$-extension $C'_{2,\bullet}$ of $C_{2,\bullet}$ transfer to a $G$-extension $C'_{1,\bullet}$ of $C_{1,\bullet}$.

\begin{df}\label{ext of mor}
For a morphism $f : C_{1,\bullet} \to C_{2,\bullet}$ in $\bK^b(\contadg)$, by a $G$-extension $f'$ of $f$ we mean $G$-extensions $(C_{1,\bullet}, i, p)$ of $C_{1,\bullet}$, $(C_{2,\bullet}, j, q)$ of $C_{2,\bullet}$ and $f' : C'_{1,\bullet} \to C'_{2,\bullet}$ in $\bK^b(\mathrm{Ban}_G(K))$, with the following diagrams in $\bK^b(\mathrm{Ban}_{G_0}(K))$.
$$\xymatrix{
C_{1,\bullet} \ar[r]_i \ar[d]^f & C'_{1,\bullet} \ar[d]^{f'} \\
C_{2,\bullet} \ar[r]_j & C'_{2,\bullet} \\}
\xymatrix{
C_{1,\bullet} \ar[r]_i \ar[d]^f & C'_{1,\bullet} \ar[d]^{f'} \\
C_{2,\bullet} & C'_{2,\bullet} \ar[l]_q\\}
\xymatrix{
C_{1,\bullet} \ar[d]^f & C'_{1,\bullet} \ar[d]^{f'} \ar[l]_p\\
C_{2,\bullet} \ar[r]_j & C'_{2,\bullet} \\}
\xymatrix{
C_{1,\bullet} \ar[d]^f & C'_{1,\bullet} \ar[d]^{f'} \ar[l]_p\\
C_{2,\bullet} & C'_{2,\bullet} \ar[l]_q\\}$$ 
for $G$-extension $C'_{1,\bullet}$ of $C_{1,\bullet}$ and $C'_{2,\bullet}$ of $C_{2,\bullet}$, commutative up to homotopy equivalences (these diagrams become commutative in the homotopy category).
\end{df}

Commutativity for these four diagrams are equivalent. 
For example, if $j \circ f$ is homotopic equivalent to $f' \circ i$, then $f$ is homotopic equivalent to $q \circ f' \circ i$.

Any $G$-extension $(C', i, p)$ of $C_\bullet \in \bK^b(\contadg)$ defines a map $G \to \mathrm{Aut}_{\bK^b(\mathrm{Ban}_K)} C_\bullet$ compatible with the natural action of $G_0$ (a possible misunderstanding is that a $G$-extension induces $G \to \mathrm{Aut}_{\bK^b(\contadg)} C_\bullet$, this is not true as a general element of $G$ does not commute with $G_0$).
If we fix a choice of representative $(C'_\bullet, i, p)$, then 
for any group element $g \in G$, we define a representative $\tilde{g} : C_\bullet \to C_\bullet$ to be $p \circ g \circ i$ lifting $g \in \mathrm{Aut}_{\bK^b(\mathrm{Ban}_K)} C_\bullet$.
If $g \in G_0$, this lift is given by $G_0$ structure regarding $C_\bullet$ as a complex of $G_0$ Banach representations.

It is easy to establish some first properties of such an extension structure.
\begin{prop}\label{first props of Gext}
We fix representatives of $G$-extension $(C'_\bullet, i, p)$ of $C_\bullet \in \bC\bh^b(\contadg)$.
\begin{itemize}
\item For any two $g_1, g_2 \in G$, $\tilde{g}_1 \circ \tilde{g}_2$ is homotopic equivalent to $\widetilde{g_1 g_2}$.
\item Different choices of $p : C'_\bullet \to C_\bullet$ with $i$ fixed or different choices of $i : C_\bullet \to C'_\bullet$ with $p$ fixed induce different $\tilde{g}$, but they all agree up to homotopy equivalences.
\item We have $\tilde{g}_1 \circ \tilde{g}_2 = \widetilde{g_1 g_2}$ if either $g_1$ or $g_2$ belongs to $G_0$.
\end{itemize}
In particular, a $G$-extension $(C'_\bullet, i, p)$ of $C_\bullet$ induces admissible Banach representations of $G$ on cohomology.
In other words, $H^i(C_\bullet) \in \Ban^{\ad}_{G}(K)$ for any $i$, extending the natural $G_0$ action to a $G$ action.
\end{prop}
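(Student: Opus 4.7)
The plan is to verify the three bullets in turn by manipulating the $G_0$-equivariant homotopy equivalence $(i,p)$, and then extract the admissibility statement for cohomology from them. The crucial inputs are the strict $G_0$-equivariance of $i$ and $p$ together with the $G_0$-equivariant homotopies $i \circ p \simeq \id_{C'_\bullet}$ and $p \circ i \simeq \id_{C_\bullet}$; these are exactly what a $G$-extension packages via Definition \ref{Gaction}.

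For the first bullet I would compute
\[ \tilde{g}_1 \circ \tilde{g}_2 = (p \circ g_1 \circ i) \circ (p \circ g_2 \circ i) = p \circ g_1 \circ (i \circ p) \circ g_2 \circ i \simeq p \circ (g_1 g_2) \circ i = \widetilde{g_1 g_2}, \]
inserting the homotopy $i \circ p \simeq \id_{C'_\bullet}$ in the middle and using that composition of chain maps respects homotopy. When $g_1 g_2 \in G_0$, the last equality uses the consistency $p \circ (g_1 g_2) \circ i = p \circ i \circ (g_1 g_2) \simeq g_1 g_2$ coming from $G_0$-equivariance of $i$ and $p \circ i \simeq \id_{C_\bullet}$. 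For the second bullet, given two $G_0$-equivariant quasi-inverses $p_1, p_2$ of a fixed $i$, the standard argument
\[ p_1 \simeq p_1 \circ (i \circ p_2) = (p_1 \circ i) \circ p_2 \simeq p_2 \]
yields $p_1 \circ g \circ i \simeq p_2 \circ g \circ i$, and the symmetric argument handles varying $i$ with $p$ fixed.

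For the third bullet the key is that when one of $g_1, g_2$ lies in $G_0$ we can shuffle it through $p$ or $i$ by strict $G_0$-equivariance, avoiding the homotopy $i \circ p \simeq \id_{C'_\bullet}$ altogether. If both $g_1, g_2 \in G_0$, everything reduces to the $G_0$-action on $C_\bullet$ and the conclusion is immediate. If $g_1 \in G_0$ and $g_2 \notin G_0$ (whence $g_1 g_2 \notin G_0$ as well), then
\[ \tilde{g}_1 \circ \tilde{g}_2 = g_1 \circ (p \circ g_2 \circ i) = p \circ (g_1 \circ g_2) \circ i = \widetilde{g_1 g_2} \]
by strict $G_0$-equivariance of $p$; the symmetric case $g_1 \notin G_0$, $g_2 \in G_0$ uses $G_0$-equivariance of $i$.

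For admissibility, I first observe that $H^i(C_\bullet)$ is computed in the abelian category $\contadg$, so it is automatically an admissible continuous Banach $G_0$-representation (in particular a Banach space). By the first three bullets, $G$ acts on $H^i(C_\bullet)$ by continuous $K$-linear automorphisms extending the given $G_0$-action. Joint continuity of this $G$-action follows by the standard reduction: a neighborhood of $g_0 \in G$ may be taken in the form $g_0 V_e$ with $V_e \subset G_0$ a neighborhood of the identity, and then $g_0 h v - g_0 v_0 = \tilde{g}_0(hv - v_0)$ is made small using continuity of the $G_0$-action at $(e, v_0)$ together with boundedness of $\tilde{g}_0$. Admissibility as a Banach $G$-representation then reduces to $G_0$-admissibility since $G_0$ is compact open in $G$. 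The main obstacle I anticipate is purely bookkeeping: carefully keeping straight when the strict convention $\tilde{g} = g$ for $g \in G_0$ is in force versus the generic formula $p \circ g \circ i$, so that the third bullet's manipulations genuinely produce strict equality of chain maps and not merely homotopy classes.
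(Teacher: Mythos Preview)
Your proposal is correct and follows essentially the same approach as the paper's proof, which simply says that the first two bullets are clear from standard facts about the homotopy category (citing \cite[Tag 010W]{Sta}) and that the third holds because both $i$ and $p$ are $G_0$-equivariant. You have unpacked these claims in exactly the way the paper intends, and your additional verification of joint continuity and admissibility of the resulting $G$-action on cohomology fills in the ``in particular'' clause that the paper leaves implicit.
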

\begin{proof}
The first two claims are clear by \cite[Tag 010W]{Sta}. The last one holds since both $i$ and $p$ are $G_0$-equivariant.
\end{proof}

\begin{prop}\label{Ind ext}
If $\widetilde{\GG}$ is a quasi-split reductive linear algebraic group over $L$, with a parabolic subgroup $\PP$, such that $\GG$ is a Levi of $\PP$.
Let $\widetilde{G} = \widetilde{\GG}(L)$ with compact open subgroup $\widetilde{G}_0$.
And let $P = \PP(L)$, $P_0 = \widetilde{G}_0 \cap P$, $G = \GG(L)$, $G_0 = \widetilde{G}_0 \cap G$.
Let $C_\bullet \in \bK^b(\contadg)$ with a $G$-extension $(C'_\bullet, i, p)$. By inflation we view $C_\bullet \in \bK^b(\Ban^{\ad}_{P_0}(K))$ with a $P$-extension $(C'_\bullet, i, p)$.
Suppose that the projection of $\PP$ onto $\GG$ sends $P_0$ inside $G_0$ and $\wtG_0 P = \wtG$, then there is a natural $\widetilde{G}$-extension ($\Ind^{\widetilde{G}}_{P} C'_\bullet, \tilde{i}, \tilde{p}) \in \bK^b(\Ban_{\widetilde{G}}(K))$ of $\Ind^{\widetilde{G}_0}_{P_0} C_\bullet \in \bK^b(\Ban^\ad_{\widetilde{G}_0}(K))$.
\end{prop}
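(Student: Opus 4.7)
The plan is to build the $\widetilde{G}$-extension by applying the continuous induction functor to the entire data $(C'_\bullet, i, p)$, and then to identify the restriction to $\widetilde{G}_0$ with $\Ind^{\widetilde{G}_0}_{P_0} C'_\bullet$ via the hypothesis $\widetilde{G}_0 P = \widetilde{G}$, $P_0 = \widetilde{G}_0 \cap P$.

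First, apply $\operatorname{ct-Ind}^{\widetilde{G}}_{P}$ termwise to $C'_\bullet$; by the first part of Lemma \ref{Ind ad} this yields a complex $\Ind^{\widetilde{G}}_{P} C'_\bullet \in \bK^b(\Ban_{\widetilde{G}}(K))$. Similarly, apply $\operatorname{ct-Ind}^{\widetilde{G}_0}_{P_0}$ termwise to $C_\bullet$ (viewed as a complex in $\bK^b(\Ban^\ad_{P_0}(K))$ via inflation along $P_0 \twoheadrightarrow G_0$); by the second part of Lemma \ref{Ind ad} (which applies since both $\widetilde{G}_0$ and $P_0$ are compact) this yields a complex $\Ind^{\widetilde{G}_0}_{P_0} C_\bullet \in \bK^b(\Ban^\ad_{\widetilde{G}_0}(K))$. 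I will define $\tilde{i}$ and $\tilde{p}$ as the termwise induction of $i$ and $p$, composed with a natural identification of $\widetilde{G}_0$-representations described next.

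The key identification is the restriction isomorphism
\[ \Ind^{\widetilde{G}}_{P} C'_\bullet \big|_{\widetilde{G}_0} \xrightarrow{\sim} \Ind^{\widetilde{G}_0}_{P_0} \bigl(C'_\bullet\big|_{P_0}\bigr) \]
obtained by restricting functions from $\widetilde{G}$ to $\widetilde{G}_0$. The surjectivity of $\widetilde{G}_0 \to \widetilde{G}/P$ (equivalent to $\widetilde{G}_0 P = \widetilde{G}$) together with $P_0 = \widetilde{G}_0 \cap P$ implies this restriction map is a bijection, and it is a topological isomorphism because both spaces acquire their topology from $\cC(\widetilde{G}_0/P_0, V) \simeq \cC(\widetilde{G}/P, V)$ via the splitting $\widetilde{G}_0/P_0 \xrightarrow{\sim} \widetilde{G}/P$. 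Using this, I replace $\Ind^{\widetilde{G}}_{P} C'_\bullet$ by $\Ind^{\widetilde{G}_0}_{P_0} C'_\bullet$ as a complex of $\widetilde{G}_0$-Banach representations.

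Next, since $i,p$ are $G_0$-equivariant and inflate to $P_0$-equivariant homotopy equivalences $C_\bullet \stackrel[i]{p}{\leftrightarrows} C'_\bullet$ in $\bK^b(\Ban_{P_0}(K))$, the functoriality of $\operatorname{ct-Ind}^{\widetilde{G}_0}_{P_0}$ (applied termwise, and to the homotopy operators) yields $\widetilde{G}_0$-equivariant morphisms $\tilde{i} = \Ind^{\widetilde{G}_0}_{P_0}(i)$ and $\tilde{p} = \Ind^{\widetilde{G}_0}_{P_0}(p)$ with $\tilde{p}\circ\tilde{i}$ homotopic to the identity via $\operatorname{ct-Ind}^{\widetilde{G}_0}_{P_0}$ applied to the original homotopy operator, and similarly for $\tilde{i}\circ\tilde{p}$. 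This produces the desired homotopy equivalence
\[ \Ind^{\widetilde{G}_0}_{P_0} C_\bullet \stackrel[\tilde{i}]{\tilde{p}}{\leftrightarrows} \Ind^{\widetilde{G}_0}_{P_0} C'_\bullet \simeq \Ind^{\widetilde{G}}_{P} C'_\bullet \]
in $\bK^b(\Ban_{\widetilde{G}_0}(K))$, exhibiting $(\Ind^{\widetilde{G}}_{P} C'_\bullet, \tilde{i}, \tilde{p})$ as a $\widetilde{G}$-extension of $\Ind^{\widetilde{G}_0}_{P_0} C_\bullet$.

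The only subtle point is the restriction isomorphism $\Ind^{\widetilde{G}}_{P} C'_\bullet|_{\widetilde{G}_0} \simeq \Ind^{\widetilde{G}_0}_{P_0} C'_\bullet$; I expect this to follow routinely from the hypothesis $\widetilde{G}_0 P = \widetilde{G}$ combined with the analytic splitting of $\widetilde{G} \to \widetilde{G}/P$ invoked in Prop \ref{ind projlim}, which ensures that topologies are compatible. Everything else is pure functoriality of continuous induction applied to a homotopy equivalence, preserving each of the commuting squares of Def \ref{ext of mor}.
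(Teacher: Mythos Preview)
Your proposal is correct and takes essentially the same approach as the paper. The paper carries out explicitly what you phrase functorially: it writes down $\tilde{i}(f)(g) = i(f(g))$ and $\tilde{p}(f')(g) = p(f'(g))$ for $g \in \widetilde{G}_0$, then checks by hand that $\tilde{i}$ is well-defined on $\Ind^{\widetilde{G}}_{P} C'_\ast$ (this computation, using $\widetilde{G}_0 \cap P = P_0$, is exactly your restriction isomorphism), and finally builds the homotopy $\tilde{h}_n$ by inducing $h_n$ through $\Ind^{\widetilde{G}_0}_{P_0}$ and transporting along the same isomorphism --- precisely your ``functoriality applied to the homotopy operators''.
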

\begin{proof}
By Lem \ref{Ind ad}, $\Ind^{\widetilde{G}}_{P} C'_\bullet \in \bK^b(\Ban_{\widetilde{G}}(K))$ and $\Ind^{\widetilde{G}_0}_{P_0} C_\bullet \in \bK^b(\Ban^\ad_{\widetilde{G}_0}(K))$.
We define 
$$\Ind^{\widetilde{G}_0}_{P_0} C_\bullet  \stackrel[\tilde{i}]{\tilde{p}}{\leftrightarrows}  \Ind^{\widetilde{G}}_{P} C'_\bullet ,$$ where $\tilde{i}$ sends $f$ to an unique function $f'$ in $\Ind^{\widetilde{G}}_{P} C'_\bullet$ such that $f'(g) = i(f(g))$ for $i: C_\bullet \to C'_\bullet$ and any $g \in \widetilde{G}_0$, $\tilde{p}$ sends $f'$ to $f$ satisfying $f(g) = p(f'(g))$ for $p: C'_\bullet \to C_\bullet$ and any $g \in \widetilde{G}_0$.
We check that $\tilde{i}$ is well-defined as if $g_1 p_1 = g_2 p_2$ for $g_1, g_2 \in \widetilde{G}_0$ and $p_1, p_2 \in P$, then $g_2^{-1} g_1 = p_2 p_1^{-1} \in \widetilde{G}_0 \cap P = P_0$,
$$f'(g_1 p_1) = p_1^{-1} i(f(g_1)) = p_2^{-1} g_2^{-1} g_1 i(f(g_1)) = p_2^{-1} i(f(g_2)) = f'(g_2 p_2).$$ 
It can also be checked that $\tilde{i}, \tilde{p}$ are $\widetilde{G}_0$-equivariant, and $\tilde{i} \circ \tilde{p}, \tilde{p} \circ \tilde{i}$ are both homotopic to identity. 
We check for $\tilde{i} \circ \tilde{p}$ as follows:
note that for each $n \in \ZZ$ we have a $G_0$-equivariant map $h_n: C'_n \to C'_{n-1}$ such that $$i \circ p - 1 = d_{n-1} \circ h_n + h_{n+1} \circ d_n,$$ where $d_n : C'_n \to C'_{n+1}$ is the differential map for $C'_\bullet$ at degree $n$.
Let $\tilde{d}_n : \Ind^{\widetilde{G}_0}_{P_0} C'_n \to \Ind^{\widetilde{G}_0}_{P_0} C'_{n+1}$ and $\tilde{h}_n : \Ind^{\widetilde{G}_0}_{P_0} C'_n \to \Ind^{\widetilde{G}_0}_{P_0} C'_{n-1}$ be the natural maps induced from $d_n$ and $h_n$.
We therefore define $\tilde{d}_n$ and $\tilde{h}_n$ on $\Ind^{\widetilde{G}}_{P} C'_\bullet$ via the natural $\widetilde{G}_0$-equivariant isomorphism $\Ind^{\widetilde{G}}_{P} C'_\bullet \xrightarrow{\sim} \Ind^{\widetilde{G}_0}_{P_0} C'_\bullet$, with the identity 
$$\tilde{i} \circ \tilde{p} - 1 = \tilde{d}_{n-1} \circ \tilde{h}_n + \tilde{h}_{n+1} \circ \tilde{d}_n.$$
\end{proof}
We do not assert functoriality for induction of $G$-extension of the above proposition.

\begin{lem}\label{Ind Inj}
Let notation be as in Prop \ref{Ind ext}.
If $I$ is an injective object in the category of admissible continuous Banach representations of $P_0$, then $\Ind^{\wtG_0}_{P_0} I$ is an injective object in the category of admissible continuous Banach representations of $\wtG_0$.
\end{lem}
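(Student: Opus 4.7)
The plan is to mirror the argument in the proof of Lemma~\ref{la acyc} by transporting the problem through the Schneider–Teitelbaum anti-equivalence and exploiting that $\operatorname{ct-Ind}^{\wtG_0}_{P_0}$ is an additive functor which sends the free admissible object $\cC(P_0,K)$ to $\cC(\wtG_0,K)$ (as already invoked in the proof of Lemma~\ref{Ind ad}).

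First I would dualize. By \cite[Thm.~3.5]{ST02I}, injectivity of $I$ in $\mathrm{Ban}^{\ad}_{P_0}(K)$ is equivalent to projectivity of the continuous dual $I^\ast$ in the category of finitely generated $K[[P_0]]$-modules. Since $P_0$ is a compact $p$-adic analytic group, $K[[P_0]]$ is (left) Noetherian, so every finitely generated projective module is a direct summand of a finitely generated free module. Thus there exist $M^\ast$ and $n\geq 1$ with $I^\ast \oplus M^\ast \simeq K[[P_0]]^n$. Transposing through the anti-equivalence (exactly as in the second half of the proof of Lemma~\ref{la acyc}) produces a decomposition
$$ I \oplus M \ \simeq \ \cC(P_0,K)^n $$
in $\mathrm{Ban}^{\ad}_{P_0}(K)$, where $M$ is the admissible Banach representation corresponding to $M^\ast$.

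Next I would apply the additive functor $\operatorname{ct-Ind}^{\wtG_0}_{P_0}$. Using the identification $\operatorname{ct-Ind}^{\wtG_0}_{P_0}\cC(P_0,K)^n \simeq \cC(\wtG_0,K)^n$ already recorded in the proof of Lemma~\ref{Ind ad}, together with the additivity of $\operatorname{ct-Ind}^{\wtG_0}_{P_0}$, we obtain
$$ \operatorname{ct-Ind}^{\wtG_0}_{P_0} I \ \oplus \ \operatorname{ct-Ind}^{\wtG_0}_{P_0} M \ \simeq \ \cC(\wtG_0,K)^n \qquad \text{in } \mathrm{Ban}^{\ad}_{\wtG_0}(K), $$
where admissibility of each summand is furnished by Lemma~\ref{Ind ad}. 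Applying the Schneider–Teitelbaum anti-equivalence for $\wtG_0$ then shows that $(\operatorname{ct-Ind}^{\wtG_0}_{P_0} I)^\ast$ is a direct summand of the free module $K[[\wtG_0]]^n$, hence projective in finitely generated $K[[\wtG_0]]$-modules; transposing once more gives injectivity of $\operatorname{ct-Ind}^{\wtG_0}_{P_0} I$ in $\mathrm{Ban}^{\ad}_{\wtG_0}(K)$.

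There is no real obstacle here: the only nontrivial inputs are the Noetherianity of the Iwasawa algebra (standard for compact $p$-adic analytic groups) and the formula $\operatorname{ct-Ind}^{\wtG_0}_{P_0}\cC(P_0,K)\simeq \cC(\wtG_0,K)$, both already in use in the text. A direct adjunction approach is tempting but fails, since restriction $\mathrm{Ban}^{\ad}_{\wtG_0}(K)\to \mathrm{Ban}^{\ad}_{P_0}(K)$ does not preserve admissibility (the free module $K[[\wtG_0]]$ is not finitely generated over $K[[P_0]]$ because $\wtG_0/P_0 \simeq \wtG/P$ is typically infinite); routing through the finitely generated side of the duality circumvents this issue entirely.
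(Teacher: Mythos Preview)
Your proposal is correct and follows essentially the same route as the paper: split $I$ off from $\cC(P_0,K)^n$ via the Schneider--Teitelbaum anti-equivalence, induce, and use $\operatorname{ct-Ind}^{\wtG_0}_{P_0}\cC(P_0,K)\simeq \cC(\wtG_0,K)$ to exhibit $\operatorname{ct-Ind}^{\wtG_0}_{P_0} I$ as a summand of $\cC(\wtG_0,K)^n$. The only difference is cosmetic: the paper spells out the isomorphism $\operatorname{ct-Ind}^{\wtG_0}_{P_0}\cC(P_0,K)\simeq \cC(\wtG_0,K)$ explicitly here (via evaluation at the identity), whereas you cite its earlier appearance in the proof of Lemma~\ref{Ind ad}.
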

\begin{proof}
As in the proof of Lem \ref{la acyc}, there exists a complement $M$ such that $I \oplus M \simeq C(P_0,K)^r$.
It suffices to prove $\operatorname{ct-Ind}^{\wtG_0}_{P_0} C(P_0,K) \simeq C(\wtG_0, K)$ as $\wtG_0$-representations.
There is a natural $\wtG_0$-equivariant map $\operatorname{ct-Ind}^{\wtG_0}_{P_0} C(P_0,K) \to C(\wtG_0, K)$ by sending $\tilde{f} \mapsto f$ such that $f(g) = \tilde{f}(g)(\mathrm{identity})$ for any $g \in \wtG_0$.
Any $f$ defines a $P_0$-equivariant function on $\wtG_0$ valued in $C(P_0, K)$ and vice versa, we get the desired isomorphism. 
\end{proof}

Suppose we have a $G$-extension (with fixed representatives) $(C'_\bullet, i, p)$ of $C_\bullet \in \bC\bh^b(\contadg)$.
For any $t \in T^+$, we define (derived) Hecke operators as follows. 

\begin{lem}\label{stab la}
For any $g \in G$, $\tilde{g}$ stabilizes $C^\la_\bullet$ and acts continuously on it.
\end{lem}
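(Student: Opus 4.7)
The plan is to analyze $\tilde{g} = p \circ g \circ i$ degree by degree, using functoriality of the locally analytic vectors functor combined with the fact that locally analytic vectors in a Banach $G$-representation are stable under the full $G$-action.

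First, in each degree $n$, the maps $i_n \colon C_n \to C'_n$ and $p_n \colon C'_n \to C_n$ are continuous and $G_0$-equivariant between the admissible Banach $G_0$-representation $C_n$ and the Banach $G_0$-representation obtained by restricting the $G$-action on $C'_n$. Post-composition of a locally analytic orbit map with a continuous $K$-linear map is again locally analytic, so any continuous $G_0$-equivariant map restricts to a continuous map on locally analytic vectors (cf.\ \cite[\S 3.5, Thm.\ 3.5.7]{Eme17}). Hence $i_n$ and $p_n$ induce continuous maps $i_n \colon C_n^{\la} \to (C'_n)^{\la}$ and $p_n \colon (C'_n)^{\la} \to C_n^{\la}$.

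Next, because $C'_n$ is a Banach $G$-representation (not merely a $G_0$-representation), any $g \in G$ acts continuously on $C'_n$, and this action preserves $(C'_n)^{\la}$: for $v \in (C'_n)^{\la}$, the orbit map $h \mapsto (hg)v$ is the right translate by $g$ of the orbit map of $v$, and right translation by $g$ is a locally analytic automorphism of $G$, so the map remains locally analytic on a neighborhood of the identity. Moreover, the restricted action on $(C'_n)^{\la}$ is continuous for its natural compact-type topology. Concatenating, we obtain the factorization
$$\tilde{g}_n \colon C_n^{\la} \xrightarrow{i_n} (C'_n)^{\la} \xrightarrow{g} (C'_n)^{\la} \xrightarrow{p_n} C_n^{\la},$$
so $\tilde{g}_n$ restricts to a continuous endomorphism of $C_n^{\la}$.

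Finally, the differentials of $C'_\bullet$ are $G$-equivariant, and $i, p$ are chain maps, so $\tilde{g}$ is a chain map on $C_\bullet$, and its restriction to the subcomplex $C^\la_\bullet$ is again a chain map. The only nontrivial input is the functoriality of locally analytic vectors under continuous $G_0$-equivariant maps and their stability under the full $G$-action on a Banach $G$-representation, both standard from \cite{Eme17}; there is no genuine obstacle.
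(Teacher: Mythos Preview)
Your proof is correct and follows the same approach as the paper. The paper's proof is the one-liner ``Taking locally analytic vectors is functorial, each of $i$, $g$, $p$ preserves locally analytic vectors (\cite[\S 3]{Eme17})''; you have simply unpacked this into its three constituent steps (functoriality for the $G_0$-equivariant maps $i,p$, and $G$-stability of locally analytic vectors in the Banach $G$-representation $C'_n$), which is exactly the intended argument.
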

\begin{proof}
Taking locally analytic vectors is functorial, each of $i$, $g$, $p$ preserves locally analytic vectors (\cite[\S 3]{Eme17}).
\end{proof}
\begin{lem}\label{Ut action}
The operator $$U_t := \frac{1}{[N_0:tN_0 t^{-1}]} \sum_{n \in N_0 / tN_0 t^{-1}} \widetilde{nt} \in \End_K ((C_\bullet^{\la})^{N_0}),$$ where $\widetilde{nt} = p \circ nt \circ i$, is a well-defined endomorphism of $(C_\bullet^{\la})^{N_0}$.
\end{lem}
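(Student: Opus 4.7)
The plan is to verify three things in turn: (i) the summand $\widetilde{nt}$ restricted to $(C^{\la}_\bullet)^{N_0}$ depends only on the coset $n(tN_0t^{-1}) \in N_0/tN_0t^{-1}$, so the sum makes sense; (ii) the averaged operator $U_t$ actually lands in the $N_0$-invariants; and (iii) $U_t$ is a continuous $K$-linear endomorphism of the complex (so in particular preserves each degree and commutes with differentials). Throughout I would use that $i$ and $p$ are $G_0$-equivariant chain maps between complexes of $K$-Banach spaces, that by Lem \ref{stab la} every $\widetilde{g} = p \circ g \circ i$ preserves locally analytic vectors and acts continuously on them, and that $t \in T^+$ stabilizes $N_0$, so $tN_0t^{-1}$ is a finite-index open subgroup of $N_0$ and the sum is finite.

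For (i), fix a coset representative $n \in N_0$ and let $n' = nn_0$ with $n_0 \in tN_0t^{-1}$. Writing $n_0 = tmt^{-1}$ for some $m \in N_0$, we get $n't = ntm$. For any $v \in (C^{\la}_\bullet)^{N_0}$, the $G_0$-equivariance of $i$ and the $N_0$-invariance of $v$ give $m \cdot i(v) = i(m \cdot v) = i(v)$, so
\[ \widetilde{n't}(v) = p(ntm \cdot i(v)) = p(nt \cdot i(v)) = \widetilde{nt}(v). \]
Thus on $(C^{\la}_\bullet)^{N_0}$ the summand is a genuine function of the coset.

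For (ii), let $n' \in N_0$ be arbitrary. Since $p$ is $G_0$-equivariant, for $v \in (C^{\la}_\bullet)^{N_0}$ we have
\[ n' \cdot U_t(v) = \frac{1}{[N_0:tN_0t^{-1}]} \sum_{n} p(n'n \cdot t \cdot i(v)). \]
As $n$ ranges over a set of representatives of $N_0/tN_0t^{-1}$, so does $n'n$; by part (i) the sum over $n'n$ equals the sum over $n$, and we conclude $n' \cdot U_t(v) = U_t(v)$. Hence $U_t(v) \in (C^{\la}_\bullet)^{N_0}$.

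For (iii), continuity and $K$-linearity of $U_t$ follow because it is a finite $K$-linear combination of compositions of the continuous $K$-linear maps $i$, $p$, and the group actions of $nt$ on $C^{\la}_\bullet$ (Lem \ref{stab la}). The compatibility with differentials is immediate because $i$, $p$, and each $nt$ are chain maps. The only point requiring some care is the well-definedness computation in (i), where one must use the $N_0$-invariance of $v$ to absorb the factor $m$ through $i$ before the group element $nt$ acts; this is the single place where the averaging is essential, and is the main (if minor) obstacle to writing the proof.
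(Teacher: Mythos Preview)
Your proof is correct and follows essentially the same approach as the paper's. The paper's argument is more compressed: it combines your steps (i) and (ii) into a single computation using the identities $\tilde{n}\circ\tilde{g}=\widetilde{ng}$ and $\tilde{g}\circ\tilde{n}=\widetilde{gn}$ for $n\in N_0$ (which encode precisely the $G_0$-equivariance of $i$ and $p$ that you invoke), and leaves the chain-map and continuity statements of your step (iii) implicit via the reference to Lem~\ref{stab la}.
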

\begin{proof}
For any $v \in C_i^{N_0}$ and $n' \in N_0$, since $\tilde{n} \circ \tilde{g} = \tilde{ng}$, $\tilde{g} \circ \tilde{n} = \tilde{gn}$ for any $n \in N_0$, $g \in G$, we have
$$n' \cdot \sum_{n \in N_0 / tN_0 t^{-1}} \tilde{nt} (v) = \sum_{n \in N_0 / tN_0 t^{-1}} \tilde{nt} \circ n_k (v) = \sum_{n \in N_0 / tN_0 t^{-1}} \tilde{nt} (v),$$ where the sets $\{n' n t\} = \{ntn_k\}$ are equal (a permutation of elements), and all $n_k \in N_0$.
By the previous lemma, the operator stabilizes the closed subcomplex $(C_\bullet^{\la})^{N_0}$ of $C_\bullet^{\la}$.
\end{proof}
Let $\widehat{T}$ be the rigid analytic space over $L$ of locally $L$-analytic characters on $T$ (\cite[Prop 6.4.5]{Eme17}), and let $C^{\an}(\widehat{T}, K)$ denote the nuclear Fr\'echet algebra of $K$-valued
rigid analytic functions on $\widehat{T}$.
Evaluation induces a natural map $T \to C^{\an}(\widehat{T}, K)$.
Pick $t \in T$ such that $\cap_{n \geq 1} t^n N_0 t^{-n} = \{1\}$, we may replace $t$ by a sufficient large power of $t$ to make it satisfy the assumption in \cite[Prop 4.1.6 (vi)]{Eme06A}.
Let $Y^+$ (resp. $Y$) be the submonoid (subgroup) generated by $T_0$ and $t$.
\begin{df}\label{FSP}
If $V$ is any object of $\mathrm{Rep}^{\mathrm{top.c}}_K(Y^+)$, then we write
\[ V_{\fs} := \cL_{b, Y^+} (C^{\an}(\widehat{Y}, K), V) \in \mathrm{Rep}^{\la}_K(Y). \]
The subscript refers to ``finite slope part" of \cite[Def. 3.2.1]{Eme06A}. 
\end{df}

Given a $G$-extension $(C'_\bullet, i, p)$ of $C_\bullet$, and $t \in T$, there is a $U_t$ action on $(C_\bullet^{\la})^{N_0}$ by Lem \ref{Ut action}.
We prescribe $t^i$ ($i \geq 0$) action as $(U_t)^i$. 
By \cite[Lem 3.4.4]{Eme06A}, $U_{t^i} = (U_t)^i$ for $i \geq 0$ up to homotopy equivalence.
By (3) of Prop \ref{first props of Gext}, $U_t$ operator commutes with $T_0$, therefore an authentic action of $Y^+$ on $(C_\bullet^{\la})^{N_0}$ is defined:
for any $y = t_0 t^i \in Y^+$, $t\cdot v := t_0 (U_t)^i \cdot v$ for any $v \in (C_\bullet^{\la})^{N_0}$.

We define Jacquet functor $J_B$ on the $G$-extension $(C'_\bullet, i, p)$ of $C_\bullet$ to be $$J_B(C_\bullet) := \left((C_\bullet^{\la})^{N_0}\right)_\fs \in \bC\bh(\mathrm{Rep}^{\la}_K(Y)),$$
where $\fs$ is with respect to the semigroup generated by $U_t$. 
By Lem \ref{Ut action}, for any $t \in T^+$, $t$ acts on $(C^{\la}_\bullet)^{N_0}$ via the Hecke operator $U_t$. 
These operators do not commute and are not associative in general, but they commute up to homotopy equivalences.
The Hecke operators associated to the monoid $T^+$ defines a topological action on cohomology $H^\ast(J_B(C_\bullet))$ by \cite[Lem 3.4.4]{Eme06A}.
The fact that cohomology of $J_B(C_\bullet)$ are independent of various choices follows from Prop \ref{GAEA} and Cor \ref{fs indep} proven in the next section.

If $Z$ is a topologically finitely generated abelian locally $L$-analytic group,
let $\Rep_{\mathrm{es}}(Z)$ be the abelian category of essentially admissible locally analytic representations of $Z$ over $K$ (\cite[\S 6.4]{Eme17}).

\begin{lem}\label{la ext}
If $W$ is a locally analytic $T$-representation, then the action of $T$ on $W$ extends to an action of $K[T] \to C^{\an}(\widehat{T}, K)$ such that $C^{\an}(\widehat{T}, K) \times W \to W$ is separately continuous.
\end{lem}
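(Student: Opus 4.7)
The plan is to invoke Fourier duality for the topologically finitely generated abelian group $T$. Since $W$ is locally analytic, the orbit map $t \mapsto t \cdot w$ is locally analytic for each $w \in W$, which endows $W$ with a canonical continuous module structure over the locally analytic distribution algebra $\cD(T, K)$ via integration of orbit maps against distributions (as in \cite[Prop 3.2]{ST02J}). The Schneider--Teitelbaum Fourier transform then identifies $\cD(T, K)$ with $C^{\an}(\widehat{T}, K)$ (at least on the compact part) by sending the Dirac distribution $\delta_t$ to the evaluation-at-$t$ function $\chi \mapsto \chi(t)$, which is precisely the natural map $T \to C^{\an}(\widehat{T}, K)$ that appears in the statement.

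Concretely, I would first recall the Schneider--Teitelbaum isomorphism $\cD(T_0, K) \xrightarrow{\sim} C^{\an}(\widehat{T_0}, K)$ for the compact part $T_0$ of $T$, which gives automatically a $C^{\an}(\widehat{T_0}, K)$-action on $W$ extending the $T_0$-action. For the full $T$, I would decompose $T \cong T_0 \times \Lambda$ with $\Lambda \cong \ZZ^r$ a discrete finitely generated free part (after possibly passing to a finite-index subgroup), so that $\widehat{T} \cong \widehat{T_0} \times (\GG_m^{\an})^r$. The $\Lambda$-action on $W$ is by commuting invertible continuous operators, and one must extend the resulting $K[\Lambda]$-action to an $\cO((\GG_m^{\an})^r)$-action compatible with the analytic structure on the character torus. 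Tensoring the compact and discrete pieces then produces the full $C^{\an}(\widehat{T}, K)$-action, with the compatibility with $T \to C^{\an}(\widehat{T}, K)$ immediate by construction, since $\delta_t$ transforms to evaluation-at-$t$.

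The main obstacle is the non-compact extension from $K[\Lambda] = \cD(\Lambda, K)$ to $\cO((\GG_m^{\an})^r)$, since a naive Fourier-duality statement provides only the former. One must exploit the locally analytic structure on $W$ to control the growth of matrix coefficients in the $\Lambda$-direction and thereby analytically continue the polynomial action of $\Lambda$ to the full ring of rigid analytic functions on the character torus; this is where the hypothesis that $W$ is locally analytic (as opposed to merely continuous) enters in an essential way. Under the stronger hypothesis that $W$ is essentially admissible, this step is automatic from the coherent-sheaf interpretation on $\widehat{T}$ used in \cite{Eme17}.
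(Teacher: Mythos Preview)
Your outline has a genuine gap at precisely the point you flag as the ``main obstacle.'' The Fourier isomorphism $\cD(T_0,K)\cong C^{\an}(\widehat{T_0},K)$ handles the compact part, but for the discrete lattice $\Lambda$ all that local analyticity of the $T$-action gives you is a $K[\Lambda]$-module structure: since $\Lambda$ is $0$-dimensional, the orbit maps in the $\Lambda$-direction carry no analytic constraint whatsoever. There is no mechanism in your proposal by which ``growth of matrix coefficients'' in the $\Lambda$-direction is controlled, and indeed an arbitrary invertible continuous operator on a compact type space need not admit an extension to an $\cO(\GG_m^{\an})$-action. So as written, the argument stops short of the conclusion, and your closing remark that essential admissibility would make it automatic is an admission that the general case is not covered.

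The paper's proof takes a different route that sidesteps this difficulty entirely. Rather than attempting to build the $C^{\an}(\widehat T,K)$-action by hand from a $T_0\times\Lambda$ decomposition, it verifies condition (iii) of \cite[Prop 6.4.7]{Eme17}: it exhibits $W$ as an increasing union of $T$-stable BH-subspaces. The candidate subspaces are the images $W_n$ of $W_{\TT_n-\an}\to W$ for a cofinal sequence of rigid analytic subgroups $\TT_n$ of $T_0$; these exhaust $W$ because $W$ is locally analytic, and the crucial observation is that each $W_n$ is invariant under the \emph{full} group $T$ simply because $T$ is abelian, so the $T$-action commutes with the $T_0$-action defining $\TT_n$-analyticity. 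Emerton's proposition then furnishes the $C^{\an}(\widehat T,K)$-action directly. The point you were missing is that it is this $T$-stable BH-filtration coming from the compact part --- not any growth estimate in the $\Lambda$-direction --- that does the work; the extension across $\Lambda$ is absorbed into the black box of \cite[Prop 6.4.7]{Eme17}, whose proof handles the Banach pieces one at a time.
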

\begin{proof}
Let $\TT_n$ be a sequence of abelian rigid analytic group corresponding to a cofinal sequence of analytic open subgroups $\TT_n(L)$ of $T_0$.
Let $W_n$ be the BH-subspace of $W$ obtained as the image of the natural map $W_{\TT_n-\an} \to W$.
Clearly $W = \bigcup_{n=1}^\infty W_n$, and each $W_n$ is invariant by $T$-action since $T$ is abelian.
$W$ satisfies condition (iii) of \cite[Prop 6.4.7]{Eme17}, and $W$ admits an action of $C^{\an}(\widehat{T}, K)$.
\end{proof}

\begin{lem}\label{sub ea}
If $W$ is a locally analytic $T$-representation over $K$ such that it restricts to an essentially admissible representation of $Y$, then $W$ is an essentially admissible representation of $T$.
\end{lem}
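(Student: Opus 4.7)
The plan is to exploit the dual fibration $\widehat{T} \twoheadrightarrow \widehat{Y}$ arising from the inclusion $Y \hookrightarrow T$ and reduce coadmissibility of $W'_b$ over $C^{\an}(\widehat{T}, K)$ to the hypothesized coadmissibility over $C^{\an}(\widehat{Y}, K)$.

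First, I would apply Lemma \ref{la ext} to extend the $T$-action on $W$ to a $C^{\an}(\widehat{T}, K)$-action, so that $W'_b$ is naturally a $C^{\an}(\widehat{T}, K)$-module. The surjection of rigid analytic groups $\widehat{T} \twoheadrightarrow \widehat{Y}$ dual to $Y \hookrightarrow T$ yields, via pull-back, a continuous algebra embedding $C^{\an}(\widehat{Y}, K) \hookrightarrow C^{\an}(\widehat{T}, K)$; the two module structures on $W'_b$ are compatible under this embedding, and the hypothesis thus provides coadmissibility of $W'_b$ over this subalgebra.

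Next, I would choose a topological splitting $T \simeq Y \times Z$ with $Z \subset T$ free abelian of rank $s = \mathrm{rank}(T/Y)$. If $T/Y$ has torsion, I would first replace $Y$ by the preimage $\widetilde{Y}$ in $T$ of the torsion subgroup of $T/Y$; since $\widetilde{Y}/Y$ is finite, $\widehat{\widetilde{Y}} \to \widehat{Y}$ is finite, from which essential admissibility of $W|_{\widetilde{Y}}$ follows by a coherent base change of coadmissible modules. The splitting then gives $\widehat{T} \simeq \widehat{Y} \times \widehat{Z}$ and $C^{\an}(\widehat{T}, K) \simeq C^{\an}(\widehat{Y}, K) \mathbin{\hat{\otimes}}_K C^{\an}(\widehat{Z}, K)$.

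Then I would build an admissible affinoid cover $\widehat{Y}_n \times D_m$ of $\widehat{T}$ by combining an affinoid cover $(\widehat{Y}_n)$ witnessing coadmissibility of $W'_b$ over $C^{\an}(\widehat{Y}, K)$ with any admissible affinoid cover $(D_m)$ of the Stein space $\widehat{Z}$. Setting $M^Y_n := \cO(\widehat{Y}_n) \mathbin{\hat{\otimes}}_{C^{\an}(\widehat{Y}, K)} W'_b$, associativity of the completed tensor products identifies
\[
\cO(\widehat{Y}_n \times D_m) \mathbin{\hat{\otimes}}_{C^{\an}(\widehat{T}, K)} W'_b \;\simeq\; M^Y_n \mathbin{\hat{\otimes}}_{C^{\an}(\widehat{Z}, K)} \cO(D_m).
\]
Since $M^Y_n$ is finitely generated over $\cO(\widehat{Y}_n)$, a choice of generators $e_1, \ldots, e_r$ produces generators $e_i \otimes 1$ of the right-hand side over the Noetherian affinoid algebra $\cO(\widehat{Y}_n) \mathbin{\hat{\otimes}}_K \cO(D_m) = \cO(\widehat{Y}_n \times D_m)$, since the $\cO(D_m)$-action absorbs factors of the form $1 \otimes f$ into scalars. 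The compatibility between the modules at different $(n,m)$ inherits from the analogous compatibility on the $\widehat{Y}$-factor together with the transparent transitivity on the $\widehat{Z}$-factor, completing the verification. The main obstacle will be rigorously justifying the associativity identification of completed tensor products under the Fréchet-Stein topologies (and cleanly handling torsion in $T/Y$ via the finite-index extension trick); the rest is essentially a clean bookkeeping of the product structure.
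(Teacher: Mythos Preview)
Your approach is correct and reaches the same conclusion as the paper, but the route differs in emphasis. The paper's proof does not split $T \simeq Y \times Z$; instead it observes that since $T_0$ is a common compact open subgroup of both $Y$ and $T$, essential admissibility over $Y$ unwinds (via $\cD^{\la}(T_0,K) \simeq C^{\an}(\widehat{T_0},K) \subset C^{\an}(\widehat{Y},K)$) to coadmissibility of $W'_b$ over $C^{\an}(\widehat{Y},K)$ alone. It then takes a Fr\'echet--Stein presentation $C^{\an}(\widehat{Y},K) = \varprojlim_n A_n$, notes that finite generation of $A_n \hat{\otimes} W'_b$ over $A_n$ persists after base-change to $A_n \hat{\otimes}_{C^{\an}(\widehat{Y},K)} C^{\an}(\widehat{T},K)$, and concludes. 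Your argument makes the implicit geometry of that base change explicit: you build an honest admissible affinoid cover $\widehat{Y}_n \times D_m$ of $\widehat{T}$ and verify finite generation slice by slice. This buys you a concrete Fr\'echet--Stein structure on $C^{\an}(\widehat{T},K)$ witnessing coadmissibility, at the cost of handling a splitting and the torsion in $T/Y$ separately; the paper's version is shorter but leaves the passage from the $A_n$-level statement to a genuine Fr\'echet--Stein structure on $C^{\an}(\widehat{T},K)$ more implicit. One small point: where you assert ``the hypothesis thus provides coadmissibility of $W'_b$ over $C^{\an}(\widehat{Y},K)$,'' you should cite the reduction the paper spells out (that the $\cD^{\la}(T_0,K)$ factor is absorbed into $C^{\an}(\widehat{Y},K)$), since the raw definition of essential admissibility involves the tensor product with a distribution algebra.
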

\begin{proof}
By definition, $W'_b$ is coadmissible as a $C^{\an}(\widehat{Y}, K) \hat{\otimes}_K \cD^{\la}(T_0, K)$-module.
By \cite[Prop. 6.4.6]{Eme17}, $\cD^{\la}(T_0, K)$ is identified with $C^{\an}(\widehat{T_0}, K)$ as a subalgebra of $C^{\an}(\widehat{Y}, K)$ for $T_0 \hookrightarrow Y$.
We have that $W'_b$ is coadmissible as a $C^{\an}(\widehat{Y}, K)$-module (\cite[Prop 2.3.2]{Eme06I}, remarks before \cite[Prop. 6.4.10]{Eme17}).

Let $C^{\an}(\widehat{Y}, K) \xrightarrow{\sim} \varprojlim_n A_n$ be a Fr\'echet-Stein structure of $C^{\an}(\widehat{Y}, K)$ rising from an admissible cover $\{\widehat{Y_n}\}$ of $\widehat{Y}$ with $A_n := \cO(Y_n)$.
Then each $A_n \hat{\otimes}_{C^{\an}(\widehat{Y}, K)} W'_b$ is finitely generated as an $A_n$-module.

By \cite[Prop 6.4.7]{Eme17} and the previous Lem \ref{la ext}, the $C^{\an}(\widehat{Y}, K)$ action on $W'_b$ extends (separately) continuously to $C^{\an}(\widehat{T}, K)$.
Therefore $A_n \hat{\otimes}_{C^{\an}(\widehat{Y}, K)} W'_b$ is finitely generated as a separately continuous $A_n \hat{\otimes}_{C^{\an}(\widehat{Y}, K)} C^{\an}(\widehat{T}, K)$-module.
We have \[ A_n \hat{\otimes}_{C^{\an}(\widehat{Y}, K)} C^{\an}(\widehat{T}, K) \to \End_{A_n} (A_n \hat{\otimes}_{C^{\an}(\widehat{Y}, K)} W'_b), \] factoring through affinoid algebras $B_n$ which are finite over $A_n$, defining coherent sheafs on $\widehat{Y_n} \times_{\widehat{Y}} \widehat{T}$ supported on affinoid spaces $\Sp(B_n) \hookrightarrow \widehat{Y_n} \times_{\widehat{Y}} \widehat{T}$ (as $B_n$ is a quotient of a Fr\'echet-Stein algebra by a closed ideal, one can prove this by proving that any continuous map of $K\{\{z_1,z_1^{-1},\cdots,z_m,z_m^{-1}\}\} \to B_n$ factors through an affinoid algebra of a $p$-adic annulus of certain radii).
These sheafs are compatible with respect to $n$, hence we obtain a coherent sheaf on $\widehat{T}$ whose global section agrees with $W'_b$ by varying $n$.
The claim then follows from coadmissibility of $W'_b$ with respect to the Fr\'echet-Stein algebra $C^{\an}(\widehat{T}, K)$.
\end{proof}

\begin{prop}\label{ES sketch}
We use notation after Def \ref{FSP}.
For each $i$, $J_B(C_i) = \left((C^{\la}_i)^{N_0}\right)_{\fs}$ is an essentially admissible locally analytic representation of $Y$, equivalently, \[ J_B(C_\bullet) \in \bC\bh(\Rep_{\mathrm{es}}(Y)).\]  
\end{prop}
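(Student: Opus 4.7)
The plan is to reduce to Emerton's essential admissibility theorem for Jacquet modules by carefully unpacking the structure on each term $C_i$ coming from the $G$-extension. First, since $C_i$ is an admissible continuous Banach representation of $G_0$, the passage to locally analytic vectors (\cite[Thm 7.1]{ST03}) shows that $C_i^{\la}$ is an admissible locally analytic representation of $G_0$, hence of compact type, with continuous action of the locally analytic distribution algebra $\cD(G_0, K)$. The closed subspace $(C_i^{\la})^{N_0}$ is then of compact type and inherits a continuous $T_0$-action from $G_0$.

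Next, I would check that the Hecke operator $U_t$ of Lem \ref{Ut action} upgrades this to a genuine compact-type action of the monoid $Y^+ = \langle T_0, t\rangle_+$ on $(C_i^{\la})^{N_0}$. Continuity of $U_t$ is immediate from Lem \ref{stab la}, since $\tilde{nt} = p \circ nt \circ i$ is a composition of continuous maps that preserve locally analytic vectors; and $U_t$ commutes with $T_0$ by part (3) of Prop \ref{first props of Gext}, so the prescription $t_0 t^i \cdot v := t_0 (U_t)^i v$ gives a well-defined action of $Y^+$ by continuous endomorphisms, placing $(C_i^{\la})^{N_0}$ in $\mathrm{Rep}^{\mathrm{top.c}}_K(Y^+)$.

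With this in place, the finite slope part $((C_i^{\la})^{N_0})_{\fs} = \cL_{b,Y^+}(C^{\an}(\widehat{Y}, K), (C_i^{\la})^{N_0})$ is a locally analytic $Y$-representation by construction (Def \ref{FSP}). To obtain essential admissibility, I would invoke Emerton's theorem \cite[Thm 0.5, Prop 4.2.36]{Eme06A}, whose conclusion is precisely that the finite slope part of the $N_0$-invariants of an admissible locally analytic representation is an essentially admissible $Y$-representation. Dualizing, this amounts to showing that the strong dual $(((C_i^{\la})^{N_0})_{\fs})'_b$ is a coadmissible module over the Fr\'echet-Stein algebra $C^{\an}(\widehat{Y}, K)$, which follows from Emerton's analysis by combining coadmissibility of $(C_i^{\la})'_b$ over $\cD(G_0, K)$ with the slope decomposition for the compact operator $U_t$.

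The main obstacle is that Emerton's setup takes an admissible locally analytic $G$-representation as input, whereas here we only have an admissible $G_0$-representation together with a Hecke action from a $G$-extension that is associative merely up to homotopy. I expect this to cause no difficulty at the level of a single term $C_i$: the finite slope part functor only sees the genuine $Y^+$-action on $(C_i^{\la})^{N_0}$, and Emerton's coadmissibility argument uses only this data together with $\cD(G_0, K)$-coadmissibility of the dual, both of which are intrinsic to $C_i$ and do not require the ambient $G$-action to be strictly (as opposed to homotopically) defined. The homotopy-theoretic subtleties surface only when one passes to cohomology and asks that the answer be independent of the choice of $G$-extension, which is deferred to Prop \ref{GAEA} and Cor \ref{fs indep}.
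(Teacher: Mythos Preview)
Your overall strategy is right and matches the paper's: reduce to Emerton's machinery in \cite{Eme06A}, using that $C_i^{\la}$ is admissible over $\cD(G_0,K)$. But your dismissal of the obstacle is too quick, and the justification you give is not correct.

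You assert that Emerton's coadmissibility argument ``uses only'' the $Y^+$-action on $(C_i^{\la})^{N_0}$ together with $\cD(G_0,K)$-coadmissibility of the dual. This is false. The heart of Emerton's proof is the factorization diagram \cite[(4.2.29)]{Eme06A} exhibiting the $t$-action on the spaces $U_n$ as factoring through a lift $U_n \to U(t)_n$; and Emerton's formula \cite[(4.2.16)]{Eme06A} for that lift reads $\mu \otimes v' \mapsto \delta(z)\sum_x s'_{n,k}(\mu)\otimes z^{-1}x^{-1}v'$, which applies $z^{-1}$ (i.e.\ $t^{-1}$) to an arbitrary element of $V'_b$, not merely to the $N_0$-coinvariants. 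In your setting $t$ does not act on $C_i^{\la}$ at all---only the composite $\tilde t = p\circ t\circ i$ does---so Emerton's formula is not even well-defined as written. The paper's proof therefore explicitly modifies the map \cite[(4.2.9)]{Eme06A} to use $i^\ast \circ t^{-1}n^{-1}\circ p^\ast$ in place of $t^{-1}n^{-1}$, and then re-verifies that the resulting maps are well-defined on $N_0$-coinvariants, that they induce the dual Hecke operator, and that the commutativity of the factorization diagram (in particular the upper-left triangle) still holds. Only after that is \cite[Prop.~3.2.24]{Eme06A} applicable.

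In short: the fix exists and is exactly the sort of thing you suspected, but it is not automatic from the $Y^+$-action on invariants alone; you must go back into Emerton's construction at the level of $V'_b$ and replace every appearance of the $G$-action by the transported action through $i$ and $p$, then check the diagrams survive. The properties you need for this are that $i^\ast, p^\ast$ are continuous and $G_0$-equivariant.
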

\begin{proof}
This follows from Emerton's strategy \cite{Eme06A}, we give a summary (and a simplification in our setting) as a proof.

We refer to \cite{Eme06A}, \cite{Eme17} for related notation.
Let $V$ be $C^{\la}_i$.
Let $\{H_n\}_{n \geq 0}$ be a decreasing cofinal sequence of good analytic
open subgroups of $G_0$ satisfying conditions in \cite[Prop 4.1.6]{Eme06A}, with rigid analytic groups $\HH_n$ underlying $H_n$.
Let $T_n := H_n \cap T$, and $\TT_n$ be the rigid analytic group underlying $T_n$.
Let $\HH^\circ_n$, $\TT^\circ_n$ be the strictly $\sigma$-affinoid analytic open subgroup of $\HH_n$, $\TT_n$ (\cite[\S 4.1]{Eme06A}).
Let $D(\HH^\circ_n, H_0)$ be the strong dual to the nuclear Fr\'echet space $C^{\la}(H_0, K)_{\HH^\circ_n-\an}$.

For each $n \geq 0$, we define 
\[ U_n := \left(D(\HH^\circ_n, H_0) \hat{\otimes}_{D^{\la}(H_0, K)} V'_b\right)_{N_0},\]
as the Hausdorff $N_0$-coinvariants of the completed tensor product  \[D(\HH^\circ_n, H_0) \hat{\otimes}_{D^{\la}(H_0, K)} V'_b, ~ \mathrm{with} ~\mathrm{kernel} ~ K_n ~\mathrm{fitting ~ into} \] \begin{eqnarray}\label{SS Un} 0 \to K_n \to D(\HH^\circ_n, H_0) \hat{\otimes}_{D^{\la}(H_0, K)} V'_b \to U_n \to 0. \end{eqnarray}

Since $V$ consists of locally analytic vectors in an admissible Banach representation $C_i$ of $G_0$, $(C_i)'_b$ is a finitely presented $K[[H_0]]$-module over the Iwasawa algebra $K[[H_0]]$. 
By \cite[Thm 7.1]{ST03}, $V'_b \simeq D^{\la}(H_0, K) \otimes_{K[[H_0]]} (C_i)'_b$ is a finitely presented $D^{\la}(H_0, K)$-module.

The inverse system $\varprojlim_{n} K_n$ has dense transition maps as $(N_0 - 1) V'_b$ is dense in $D(\HH^\circ_n, H_0) \hat{\otimes}_{D^{\la}(H_0, K)} V'_b$ (and $D^{\la}(H_0, K)$ is dense in $D(\HH^\circ_n, H_0)$), we can apply a topological version of Mittag-Leffler condition used by \cite[\S Thm(B)]{ST03}, which refers to \cite[Chapter III, \S 0.13.2]{EGA61} so that \[ 0 \to \varprojlim_n K_n \to V'_b \to \varprojlim_n U_n \to 0.\]
The strong dual to $V^{N_0}$ is naturally identified with the Hausdorff $N_0$-coinvariants $(V'_b)_{N_0}$ of the strong dual to $V$,
we have (corresponding to \cite[(4.2.6)]{Eme06A}) a $Y^+$-equivariant isomorphism \[ (V^{N_0})'_b \xrightarrow{\sim} \varprojlim_n U_n. \]

The proof of \cite[Cor 4.2.26]{Eme06A} (replace $C^\an(\hat{Z}_{G, n}, K)^{\dagger} \hat{\otimes}_{K} D(\mathbb{M}_{n}^{\circ}, M_{0})$ by $D(\TT^\circ_n, T_0)$ and $C^\an(\hat{Z}_{G, n}, K)^{\dagger} \hat{\otimes}_{K} D(\mathbb{H}_{n}^{\circ}, H_{0})$ by $D(\mathbb{H}_{n}^{\circ}, H_{0})$) shows the map \[ D(\TT^\circ_n, T_0) \otimes_{D(\TT^\circ_{n+1}, T_0)} U_{n+1} \to U_n \] is a $D(\TT^\circ_n, T_0)$-compact map in the sense of \cite[Def 2.3.3]{Eme06A}.
The proof of \cite[Prop 4.2.28]{Eme06A} shows that for $t$ (suitably chosen in the sense of \cite[Prop 4.1.6 (vi)]{Eme06A}), the Hecke action of $t$ (explicitly expressed as (\ref{Hecke formula})) given by $G$-extension $(C'_\bullet, i, p)$ of $C_\bullet$ on $U_n$ factors through
\begin{eqnarray}\label{diag diag} \xymatrix{
D(\TT^\circ_n, T_0) \otimes_{D(\TT^\circ_{n+1}, T_0)} U_{n+1} \ar[rr] \ar[dd]^t & & U_n \ar[dd]^t \ar[lldd]\\
& & \\
D(\TT^\circ_n, T_0) \otimes_{D(\TT^\circ_{n+1}, T_0)} U_{n+1} \ar[rr] & & U_n. \\
}\end{eqnarray}

Let $H(t)_n := (t^{-1} H_n t) \cap H_n$, $H(t)_{n,n+1} := (t^{-1} H_n t) \cap H_{n+1}$ with underlying strictly $\sigma$-affinoid analytic groups $\HH(t)_n^\circ$, $\HH(t)_{n,n+1}^\circ$ as in \cite[\S 4.1]{Eme06A}.
We define \[ U(t)_{n,n+1} := \left(D(\HH(t)^\circ_{n,n+1}, H_0) \hat{\otimes}_{D^{\la}(H_0, K)} V'_b\right)_{N_0}, \]
\[ U(t)_{n} := \left(D(\HH(t)^\circ_{n}, H_0) \hat{\otimes}_{D^{\la}(H_0, K)} V'_b\right)_{N_0}. \]

Note that we have to modify the map (4.2.9) $U_n \to U(t)_n$ of \cite{Eme06A}: namely the map (4.2.16) should be changed to \begin{eqnarray}\label{Hecke formula} \mu \otimes v^{\prime} \mapsto \delta(z) \sum_{x} s_{n, k}^{\prime}(\mu) \otimes i^\ast \circ t^{-1} n^{-1} \circ p^\ast v', \end{eqnarray} where $i^\ast$, $p^\ast$ are the dual maps of $i$, $p$ (and we replace $z$, $x$ in (4.2.16) of \cite{Eme06A} by $t$, $n$ in line with notions in Lem \ref{Ut action}) in our setting.
We list the properties for Emerton's arguments to work: the dual of Hecke operator $U_t$ of Lem \ref{Ut action} on $U_n$ is continuous and $i^\ast$, $p^\ast$ are $G_0$-equivariant.  
The same arguments in \cite[Lem 4.2.11, Lem 4.2.19]{Eme06A} define the map (4.2.9) so that the diagram (4.2.10) of \cite{Eme06A} is commutative.
For the commutativity of (\ref{diag diag}), we decompose the diagram by the following one (diagram (4.2.29) of \cite{Eme06A}):
\[ \xymatrix{
D(\TT^\circ_n, T_0) \otimes_{D(\TT^\circ_{n+1}, T_0)} U(t)_{n,n+1} \ar[rr] \ar[dd]^t & & U(t)_n \ar[dd]^t \ar[lldd]\\
& & \\
D(\TT^\circ_n, T_0) \otimes_{D(\TT^\circ_{n+1}, T_0)} U_{n+1} \ar[rr] & & U_n. \\
} \]
Commutativity of the upper left triangle of (\ref{diag diag}) follows from directly verifying the formula (\ref{Hecke formula}).

We finally can apply \cite[Prop 3.2.24]{Eme06A} to conclude coadmissibility of $(V^{N_0}_\fs)'_b$ over $C^{\an}(\widehat{Y}, K)$, or equivalently, $V^{N_0}_\fs$ is an essentially admissible $Y$-representation.
\end{proof}

\section{Exactness of finite slope part and applications on Jacquet functors}\label{Exact FS}

In this section we establish an exactness result of taking finite slope part on locally convex vector spaces. 
We apply this result to define derived Jacquet functors and obtain further results on functoriality in \S \ref{App uni}. It is worth noting that Hao Lee defines derived Jacquet functors using a different approach.

We use notation in \S \ref{LR}.
For a locally convex vector space $V$ over a $p$-adic local field $K$, let $\overline{\{0\}_V}$ be the closure of $0$ in $V$, let $V / \overline{\{0\}_V}$ be the Hausdorff quotient of $V$, and let $\hat{V}$ be the Hausdorff completion of $V$ (\cite[\S 7]{Sch01}).

\begin{lem}\label{HS quot SES}
Let $0 \to U \to V \to W \to 0$ be a short exact sequence of locally convex spaces such that $U$ has the subspace topology and $W$ has quotient topology with respect to $V$.
Then $0 \to U / \overline{\{0\}_U} \to V / \overline{\{0\}_V} \to W / \overline{\{0\}_W} \to 0$ is exact at the first and last term with subspace and quotient topology.
For the middle term, the image of $U / \overline{\{0\}_U}$ is dense in the kernel of $V / \overline{\{0\}_V} \twoheadrightarrow W / \overline{\{0\}_W}$.
\end{lem}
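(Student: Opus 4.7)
My plan is to verify the three assertions in turn, using two standard facts about TVS: any quotient map of TVS (in particular $\pi_V$, $\pi_U$, $\pi_W$ and $q:V\to W$) is open, and $\overline{\{0\}}$ is the intersection of all open neighborhoods of $0$, so that $\overline{\{x\}}=x+\overline{\{0\}}$ and any open neighborhood of $y\in\overline{\{x\}}$ automatically contains $x$.

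For the left term: since $U$ carries the subspace topology, $\overline{\{0\}_U}=U\cap\overline{\{0\}_V}$, which immediately gives injectivity of $\bar\iota:U/\overline{\{0\}_U}\to V/\overline{\{0\}_V}$. To upgrade this to a topological embedding I would take an open $W_U\subseteq U$, write $W_U=U\cap W_V$ for some open $W_V\subseteq V$, and verify $\bar\iota^{-1}(\pi_V(W_V))=\pi_U(W_U)$. The nontrivial containment is $\subseteq$: given $u\in U$ with $u=w+v_0$, $w\in W_V$, $v_0\in\overline{\{0\}_V}$, the element $w$ lies in $u+\overline{\{0\}_V}=\overline{\{u\}}$, so the open set $W_V$ containing $w$ must also contain $u$, forcing $u\in U\cap W_V=W_U$. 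For the right term, $\bar q:V/\overline{\{0\}_V}\to W/\overline{\{0\}_W}$ is surjective and continuous, and it is open because for any open $\pi_V(W_V)$ one has $\bar q(\pi_V(W_V))=\pi_W(q(W_V))$, which is open since $q$ is; an open continuous surjection is a quotient map, yielding the desired topology on the right-hand side.

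For density in the middle I would compute both sides. Since $q$ is an open quotient map with kernel $U$, a direct neighborhood argument (any open neighborhood of $v\in q^{-1}(\overline{\{0\}_W})$ has open image under $q$, which must meet $\{0\}=q(U)$) shows $q^{-1}(\overline{\{0\}_W})=\overline{U}$ in $V$, hence $\ker(\bar q)=\pi_V(\overline{U})$. Meanwhile the image of $\bar\iota$ is $\pi_V(U)$, and since $\pi_V$ is open its closure in $V/\overline{\{0\}_V}$ equals $\pi_V(\overline{\pi_V^{-1}(\pi_V(U))})=\pi_V(\overline{U+\overline{\{0\}_V}})=\pi_V(\overline{U})$, the last equality because $u+v_0\in\overline{\{u\}}\subseteq\overline{U}$ for any $u\in U$ and $v_0\in\overline{\{0\}_V}$. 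The two sides coincide, establishing density. The only real subtlety is the embedding argument in Step~1, which rests on the closure-of-$\{0\}$ identity above; the rest is routine bookkeeping with the openness of the various quotient maps.
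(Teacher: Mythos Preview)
Your proof is correct and follows essentially the same route as the paper: the heart of both arguments is the identification $q^{-1}(\overline{\{0\}_W})=\overline{U}$ via openness of the quotient map $q$, from which $\ker(\bar q)=\pi_V(\overline{U})$ and density follow. The paper phrases this with open lattices (appropriate in the non-archimedean setting) rather than general open neighbourhoods, but the content is identical. Your write-up is in fact more complete: the paper merely asserts that $U/\overline{\{0\}_U}\hookrightarrow V/\overline{\{0\}_V}$ carries the subspace topology, whereas you supply an argument via the closure-of-singleton identity $\overline{\{u\}}=u+\overline{\{0\}_V}$.
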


\begin{proof}
It is direct to verify $0 \to U / \overline{\{0\}_U} \to V / \overline{\{0\}_V}$ is injective, $V / \overline{\{0\}_V} \to W / \overline{\{0\}_W} \to 0$ is surjective with quotient topology, and the sequence $0 \to \overline{\{0\}_U} \to \overline{\{0\}_V} \to \overline{\{0\}_W}$ is exact. 

Quotient maps of locally convex vector spaces are automatically open.
If $v \in V$ has image in $\overline{\{0\}_W}$, then for any open lattice $\Lambda$, $U \cap (v+\Lambda) \neq \emptyset$, implying $v \in \overline{U}$, which verifies our claim.
\end{proof}

\begin{prop}\label{HS comp exact}
If $0 \to U \to V \to W \to 0$ is a short exact sequence of locally convex spaces such that $U$ has subspace topology and $W$ has quotient topology, the sequence $0 \to \hat{U} \to \hat{V} \to \hat{W}$ is exact with strict morphisms.
\end{prop}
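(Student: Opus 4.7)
The plan is a two-step reduction followed by a direct argument with open lattices. First, by Lemma \ref{HS quot SES} and the fact that the Hausdorff completion factors through the Hausdorff quotient (so $\hat{U}=\widehat{U/\overline{\{0\}_U}}$, and similarly for $V$ and $W$), we may assume $U$, $V$, $W$ are all Hausdorff. Second, we replace $U$ by its closure $\overline{U}$ in $V$, which by Lemma \ref{HS quot SES} equals $\ker(V \to W)$. Since $U$ is dense in $\overline{U}$ with the subspace topology inherited from $V$, the natural map $\hat{U} \to \widehat{\overline{U}}$ is a topological isomorphism, so we are reduced to proving the claim for a strictly exact sequence $0 \to U \to V \to W \to 0$ with $U$ closed in $V$ and $W = V/U$.

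In this reduced setting, the fact that $U$ carries the subspace topology gives the standard identification: if $\Lambda \subset V$ runs through a fundamental system of open lattices, then $\Lambda \cap U$ does so in $U$, whence $\hat{U} \hookrightarrow \hat{V}$ is injective and strict, with closed image equal to the closure of $U$ in $\hat{V}$. The composition $\hat{U} \to \hat{V} \to \hat{W}$ vanishes by continuity together with density of $V$ in $\hat{V}$.

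For the crucial middle exactness, take $v \in \hat{V}$ mapping to $0$ in $\hat{W}$ and choose a Cauchy net $(v_\alpha) \subset V$ converging to $v$; then $(\bar{v}_\alpha)$ converges to $0$ in $\hat{W}$, and hence in $W$ since $W \hookrightarrow \hat{W}$ is a topological embedding. For each open lattice $\Lambda \subset V$, the image $\bar{\Lambda}$ is an open lattice in $W$ by the quotient topology, so eventually $\bar{v}_\alpha \in \bar{\Lambda}$, yielding a decomposition $v_\alpha = u_\alpha + \lambda_\alpha$ with $u_\alpha \in U$ and $\lambda_\alpha \in \Lambda$. Choosing $\alpha$ further so that also $v - v_\alpha \in \hat{\Lambda}$ yields $v - u_\alpha \in \hat{\Lambda}$, showing that $v$ lies in the closure of $U$ in $\hat{V}$, which is $\hat{U}$. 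Strictness of $\hat{V} \to \hat{W}$ is then automatic: the quotient topology on the image of $V$ in $W$ is preserved under completion, so open lattices in the image of $\hat{V} \to \hat{W}$ coincide with images of open lattices in $\hat{V}$.

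The main obstacle is the lifting argument in the middle-exactness step: one must carefully use the quotient topology on $W$ together with the closedness of $U$ in $V$ to approximate a preimage of $0$ in $\hat{W}$ by elements of $U$ modulo arbitrarily small completed lattices. Once this is in hand, strictness of both maps follows by tracking fundamental systems of open lattices under completion.
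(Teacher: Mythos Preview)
Your proof is correct and reaches the same conclusion, but the route differs from the paper's in one structural respect. The paper does \emph{not} make your second reduction (replacing $U$ by its closure $\overline{U}=\ker(V\to W)$); instead it works directly with the dense inclusion $U\subset K:=\ker(V\to W)$ and observes that for every open lattice $\Lambda\subset V$ one has $K+\Lambda=U+\Lambda$ (since $U+\Lambda$ is open and contains $U$, hence contains $\overline{U}\supseteq K$). This single observation makes each finite-level sequence
\[
0 \to U/(\Lambda\cap U) \to V/\Lambda \to W/\overline{\Lambda} \to 0
\]
exact, and then left exactness of inverse limits (Stacks, Tag 02N1) gives exactness of $0\to\hat U\to\hat V\to\hat W$ in one stroke, with strictness read off from the inverse-limit description of the completed topologies. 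Your approach instead first passes to the closed-subspace situation (using that a dense subspace with the induced topology has the same completion) and then argues middle exactness by chasing a Cauchy net and approximating by elements of $U$ modulo completed lattices. Both arguments hinge on the same density-plus-open-lattice fact; the paper's packaging via inverse limits is a bit shorter and handles strictness uniformly, while your element-level argument is more explicit about why a given $v\in\hat V$ lands in the closure of $U$. Your final sentence on strictness of $\hat V\to\hat W$ is slightly glib; if you want it airtight, note that the open lattices $\overline{\Lambda}\subset W$ are cofinal and that $\hat V/\hat U\to\hat W$ identifies $\hat\Lambda$-cosets with $\widehat{\overline{\Lambda}}$-cosets, which is exactly what the inverse-limit formulation in the paper makes transparent.
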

\begin{proof}
By Lem \ref{HS quot SES}, we may assume $U, V, W$ are Hausdorff. 
The sequence $0 \to U \to V \to W \to 0$ is exact at the first and last term with subspace and quotient topology, and the image of $U$ is dense in the kernel $Z$ of $V \twoheadrightarrow W$.

For any open lattice $\Lambda \subset V$, projection of $\Lambda$ is open as quotient map of locally convex spaces is open, we claim that $0 \to U / (\Lambda \cap U) \to V / \Lambda \to W / \Lambda \to 0$ is exact.
For any $v$, whose residue class is in the kernel of $V / \Lambda \twoheadrightarrow W / \Lambda$, $v \in Z+\Lambda = U + \Lambda$ as $U$ is dense in $Z$. 
The corresponding sequence of Hausdorff completion (\cite[\S 7]{Sch01}) $$0 \to \varprojlim\limits_{\Lambda \subset V ~ \mathrm{open}} U / (\Lambda \cap U) \to \varprojlim\limits_{\Lambda \subset V ~ \mathrm{open}} V / \Lambda \to \varprojlim\limits_{\Lambda \subset V ~ \mathrm{open}} W / \Lambda $$ is exact by \cite[Tag 02N1]{Sta}. 
The kernel between Hausdorff spaces is closed, and the strictness of maps can be directly checked as the set of open lattice $\{\Lambda \subset V ~ \mathrm{open}\}$ defines the topology of everything here.
\end{proof}

Let $K\{\{z,z^{-1}\}\}$ be the ring of entire functions of $\GG_m^{\mathrm{rig}}$. 
For a sequence of increasing radius $\{r_n\}$ greater than $1$ and $\lim\limits_n r_n = \infty$, the affinoid algebras associated to closed rigid annuli $A_n := \sO(\{r_n^{-1} \leq |z| \leq r_n\})$ define a Fr\'echet-Stein algebra structure of $K\{\{z,z^{-1}\}\} = \varprojlim_n A_n$ in the sense of \cite{ST03}.

\begin{prop}\label{FS flat}
Let $R$ be a Noetherian (left and right for the noncommutative case) ring, and $A = \varprojlim_n A_n$ be a Fr\'echet-Stein algebra, together with a compatible family $R \to A_n$ of flat maps.
Then $R \to A$ is flat.
\end{prop}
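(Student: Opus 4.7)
The plan is to use Noetherianness of $R$ to reduce flatness of $R \to A$ to the following test: for every injection $N \hookrightarrow M$ of finitely generated (left) $R$-modules, the induced map $N \otimes_R A \to M \otimes_R A$ is injective. Since $R$ is Noetherian, $N$ and $M$ are in fact finitely presented, and consequently $N \otimes_R A$ and $M \otimes_R A$ are finitely generated $A$-modules.

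The key input is the Schneider--Teitelbaum structure theory of Fr\'echet--Stein algebras: every finitely generated $A$-module is coadmissible (see \cite[Cor. 3.4]{ST03}). In particular, for any finitely generated $A$-module $P$, the canonical map
\[ P \xrightarrow{\sim} \varprojlim_n \left( A_n \otimes_A P \right) \]
is an isomorphism. Applying this to $P = M \otimes_R A$ and $P = N \otimes_R A$ and invoking associativity $A_n \otimes_A (M \otimes_R A) \simeq M \otimes_R A_n$, I would identify
\[ M \otimes_R A \simeq \varprojlim_n M \otimes_R A_n, \qquad N \otimes_R A \simeq \varprojlim_n N \otimes_R A_n. \]

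Under these identifications the map $N \otimes_R A \to M \otimes_R A$ becomes the inverse limit of the maps $N \otimes_R A_n \to M \otimes_R A_n$. Each component is injective by the assumed flatness of $R \to A_n$, and since inverse limits of modules are left exact, the limit map is injective as well. I do not foresee a genuine obstacle: once one has the coadmissibility identification of finitely generated $A$-modules with their Fr\'echet--Stein presentations, the entire argument collapses to left-exactness of $\varprojlim$, with the nontrivial content being the invocation of the Schneider--Teitelbaum formalism.
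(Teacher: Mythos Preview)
Your argument is correct and follows essentially the same route as the paper's proof. The only cosmetic difference is that the paper tests flatness on the injections $J \hookrightarrow R$ for ideals $J$ (the ideal criterion), whereas you test on arbitrary injections $N \hookrightarrow M$ of finitely generated $R$-modules; in both cases the key steps are identical --- invoke \cite[Cor.~3.4]{ST03} to see that the relevant $A$-modules are coadmissible (note the reference is for finitely \emph{presented} $A$-modules, which is what you actually have), rewrite them as inverse limits over the $A_n$, and conclude by flatness of each $R \to A_n$ together with left-exactness of $\varprojlim$.
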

\begin{proof}
This is an abstraction of proof of \cite[Thm 4.11]{ST03}.
Let $J \subseteq R$ be a ideal.
As $R$ is Noetherian, $A \otimes_{R} J$ is finitely presented and therefore coadmissible by \cite[Cor 3.4]{ST03}.
By the definition of coadmissibility and \cite[Cor 3.1]{ST03},
\begin{eqnarray*} A \otimes_{R} J & \simeq & \varprojlim_n A_n \otimes_A (A \otimes_{R} J) \\ & \simeq & \varprojlim_n A_n \otimes_R J.
\end{eqnarray*}
As $R \to A_n$ is flat, $A_n \otimes_R J \to A_n$ is injective, we conclude that $A \otimes_R J \to A$ is injective since the projective limit is left exact.
\end{proof}

\begin{cor}\label{Gm flat}
The natural map $K[z] \to K\{\{z,z^{-1}\}\}$ is flat.
\end{cor}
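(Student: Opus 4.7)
The plan is to verify the hypotheses of Proposition \ref{FS flat} for $R = K[z]$ and $A = K\{\{z,z^{-1}\}\} = \varprojlim_n A_n$, with the Fréchet–Stein structure given in the paragraph preceding Prop \ref{FS flat}. The ring $K[z]$ is of course Noetherian (even a PID), so the only nontrivial input is to establish that each structure map $K[z] \to A_n$ is flat.

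For this, I would observe two things about $A_n = \sO(\{r_n^{-1} \leq |z| \leq r_n\})$. First, the inclusion $K[z] \hookrightarrow A_n$ is simply ``realize a polynomial as an analytic function on the annulus,'' which is well-defined and injective (a nonzero polynomial has finitely many zeros, hence is nonzero on the annulus). Second, $A_n$ is an integral domain: the closed annulus $\{r_n^{-1} \leq |z| \leq r_n\}$ is an irreducible affinoid rigid space (one can identify $A_n$ with the ring of Laurent series $\sum_{k \in \ZZ} a_k z^k$ satisfying $|a_k|\, r_n^{|k|} \to 0$, and verify directly that the product of two nonzero such series has nonzero leading coefficient in an appropriate sense, or equivalently embed $A_n$ into its field of fractions of meromorphic functions).

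Combining these, $A_n$ is a torsion-free module over the PID $K[z]$, since multiplication by any nonzero $f \in K[z]$ on $A_n$ is injective (it is just multiplication by a nonzero element in the domain $A_n$). Over a PID, torsion-free is equivalent to flat, so $K[z] \to A_n$ is flat for every $n$.

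With flatness of every $K[z] \to A_n$ established, Proposition \ref{FS flat} applies directly and yields that $K[z] \to K\{\{z,z^{-1}\}\}$ is flat. The only step requiring any substance is the verification that $A_n$ is a domain; this is standard for affinoid algebras of connected, irreducible rigid spaces such as an annulus, so I do not expect it to be a real obstacle — the corollary is essentially a one-line application of the preceding proposition.
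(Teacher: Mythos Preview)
Your proof is correct, and both you and the paper reduce to Proposition~\ref{FS flat}; the only difference is how flatness of each $K[z] \to A_n$ is verified. The paper fixes $r_n = p^n$, works integrally by observing that $\cO_K[z] \to \widehat{\cO_K[p^n z, p^n z^{-1}]}_p$ is a localization followed by $p$-adic completion (both flat for Noetherian rings), and then base-changes to $K$. You instead exploit that $K[z]$ is a PID: since $A_n$ is a domain and $K[z] \hookrightarrow A_n$, the module $A_n$ is torsion-free and hence flat over $K[z]$. Your route is slightly more elementary and works for any choice of radii, while the paper's route makes the integral structure explicit and would adapt more readily if the base ring were not a PID.
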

\begin{proof}
We choose $r_n$ to be $p^n$.
Each map $O_K[z] \to \widehat{O_K[p^nz,p^nz^{-1}]}_p$ is a composition of localization and completion with respect to $p$. Base change to $K$, we have $K[z] \to \sO(\{r_n^{-1} \leq |z| \leq r_n\})$ is flat.
Now we can apply Prop \ref{FS flat}.
\end{proof}

\begin{thm}\label{FSEX}
Let $0 \to U \to V \to W \to 0$ be a short exact sequence in $\mathrm{Rep}^{\mathrm{top.c}}_K(Y^+)$ such that $U$ has subspace topology and $W$ has quotient topology.
Then $0 \to U_\fs \to V_\fs \to W_\fs \to 0$ is exact in $\mathrm{Rep}^\la_K(Y)$, and $U_\fs$ is a closed subspace and $W_\fs$ has quotient topology.
\end{thm}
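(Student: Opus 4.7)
The strategy is to pass to strong duals and reduce exactness to a flatness statement over the Fr\'echet-Stein algebra $C^{\an}(\widehat{Y}, K)$. Left exactness is almost formal from the definition $V_\fs = \mathcal{L}_{b,Y^+}(C^{\an}(\widehat{Y}, K), V)$: since $U$ carries the subspace topology, any $Y^+$-equivariant continuous map $C^{\an}(\widehat{Y}, K) \to V$ whose image lies in $U$ factors continuously through $U$, which immediately gives injectivity of $U_\fs \hookrightarrow V_\fs$, identifies its image with the kernel of $V_\fs \to W_\fs$, and shows $U_\fs$ is closed.

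For the remaining content (surjectivity and the quotient topology on $W_\fs$) I would apply Prop \ref{HS comp exact}, combined with the anti-equivalence between compact type and nuclear Fr\'echet spaces, to dualize $0 \to U \to V \to W \to 0$ into a short exact sequence $0 \to W'_b \to V'_b \to U'_b \to 0$ of nuclear Fr\'echet $K[Y^+]$-modules with strict morphisms. Writing $\widehat{Y} = \widehat{T_0} \times \mathbb{G}_m^{\mathrm{rig}}$ and exhausting $\mathbb{G}_m^{\mathrm{rig}}$ by closed annuli as in Cor \ref{Gm flat}, one obtains a Fr\'echet-Stein presentation $C^{\an}(\widehat{Y}, K) = \varprojlim_n A_n$. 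Following the same dualization used in the proof of Prop \ref{ES sketch} (adapted from \cite[\S 3.2, \S 4.2]{Eme06A}), the dual of the finite slope part is naturally identified with
\[ (V_\fs)'_b \;\simeq\; \varprojlim_n \bigl(A_n \hat{\otimes}_{K[Y^+]} V'_b\bigr). \]
The exactness input is then Cor \ref{Gm flat}: flatness of $K[z] \to K\{\{z,z^{-1}\}\}$, combined with Prop \ref{FS flat}, yields flatness of $K[Y^+] \to A_n$ at each level, so that $A_n \hat{\otimes}_{K[Y^+]} -$ preserves the dual exact sequence level by level. A topological Mittag-Leffler argument of the form used at the end of the proof of Prop \ref{ES sketch} (dense transition maps plus \cite[Chap.~III, \S 0.13.2]{EGA61}) then transports both exactness and strictness to the projective limit. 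Dualizing back recovers $0 \to U_\fs \to V_\fs \to W_\fs \to 0$ in $\mathrm{Rep}^{\la}_K(Y)$, and strictness of the dual morphisms forces $W_\fs$ to carry the quotient topology from $V_\fs$.

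The main obstacle is the Mittag-Leffler step. One must check that the inverse system $\bigl(A_n \hat{\otimes}_{K[Y^+]} V'_b\bigr)_n$ has dense transition maps, so that both exactness and strictness survive the projective limit. This is essentially the same technical core that underlies the essential admissibility argument of Prop \ref{ES sketch}, but here the subspace/quotient hypotheses on the original sequence must be carefully propagated through the dualization, the level-wise tensor product, and the passage to the limit, since it is precisely the strictness of all intermediate morphisms that encodes the topological conclusions of the theorem.
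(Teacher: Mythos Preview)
Your overall strategy (dualize, use a flatness statement, then complete) is the right spirit, but the paper's argument is both simpler and avoids the obstacle you flag. The key simplification is that one does \emph{not} work over $K[Y^+]$ or over the full Fr\'echet--Stein structure of $C^{\an}(\widehat{Y},K)$. Instead, the paper invokes \cite[Prop.~3.2.28, Lem.~3.2.3]{Eme06A} to identify, for any $V \in \mathrm{Rep}^{\mathrm{top.c}}_K(Y^+)$,
\[
(V_\fs)'_b \;\simeq\; K\{\{t,t^{-1}\}\} \,\hat{\otimes}_{K[t]}\, V'_b,
\]
a single Hausdorff completion of $K\{\{t,t^{-1}\}\} \otimes_{K[t]} V'_b$. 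The uncompleted tensor of the dual sequence $0 \to W'_b \to V'_b \to U'_b \to 0$ is exact by Cor~\ref{Gm flat} (flatness of $K[t] \to K\{\{t,t^{-1}\}\}$), and then Prop~\ref{HS comp exact} gives that its Hausdorff completion $0 \to (W_\fs)'_b \to (V_\fs)'_b \to (U_\fs)'_b$ is exact with strict morphisms. No Mittag--Leffler argument is needed at all; Hahn--Banach and reflexivity finish the proof.

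By contrast, your proposed route has a genuine gap in the flatness step. You invoke Prop~\ref{FS flat} to get flatness of $K[Y^+] \to A_n$, but that proposition requires the base ring to be Noetherian, and $K[Y^+] = K[T_0][t]$ is not Noetherian since $T_0$ is an infinite compact group. Cor~\ref{Gm flat} only treats the one-variable polynomial ring $K[z]$, and neither result says anything about the $T_0$-part of $Y^+$. So the level-wise exactness you want is not available from the tools at hand, and you are then left with the Mittag--Leffler step you correctly identify as problematic. The paper sidesteps both issues by collapsing everything to the single $t$-variable via Emerton's identification and then applying Prop~\ref{HS comp exact} once.
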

\begin{proof}
We pick $t \in T$ satisfying \cite[Prop 4.1.6 (vi)]{Eme06A} as before.
The weight space associated to $\ZZ\cdot t$ in \cite[\S 6.4]{Eme17} is $\GG_m^{\mathrm{rig}}$.

By \cite[Prop 3.2.6]{Eme06A}, $U_\fs \hookrightarrow V_\fs$ is a closed embedding.
Locally convex topological $K$-vector spaces of compact type are reflexive. 
By Hahn-Banach (\cite[Cor 9.4]{Sch01}), it suffices to show that $$0 \to (W_\fs)'_b \to (V_\fs)'_b \to (U_\fs)'_b \to 0$$ is exact and has subspace and quotient topology.
By \cite[Prop 3.2.27]{Eme06A} and \cite[Lem 3.2.3]{Eme06A}, the sequence is the same as $$0 \to K\{\{t,t^{-1}\}\} \hat{\otimes}_{K[t]} W'_b \to K\{\{t,t^{-1}\}\} \hat{\otimes}_{K[t]} V'_b \to K\{\{t,t^{-1}\}\} \hat{\otimes}_{K[t]} U'_b \to 0,$$
which is the Hausdorff completion of $$0 \to K\{\{t,t^{-1}\}\} \otimes_{K[t]} W'_b \to K\{\{t,t^{-1}\}\} \otimes_{K[t]} V'_b \to K\{\{t,t^{-1}\}\} \otimes_{K[t]} U'_b \to 0.$$
It is exact by Cor \ref{Gm flat}, and equipped with subspace and quotient topology.
We conclude by applying Prop \ref{HS comp exact}.
\end{proof}

By \cite[Lem 3.1.1]{Eme06A}, $\Rep^{\mathrm{top.c}}_K(Y^+)$ is an additive category with images as closures of the usual images.
\begin{df}
If $C_\bullet$ is a chain complex of objects in $\Rep^{\mathrm{top.c}}_K(Y^+)$ with differential maps $d_i : C_i \to C_{i+1}$, then we define $i$-th \emph{Hausdorff cohomology} $H^i_h(C_\bullet)$ to be \[H^i_h(C_\bullet) := \ker(d_i) / \overline{\mathrm{im}(d_{i-1})}. \]
\end{df}
\begin{rem}
A morphism $C_\bullet \to D_\bullet$ induces a natural map $H^\ast_h(C_\bullet) \to H^\ast_h(D_\bullet)$ on Hausdorff cohomology, invariant under homotopy equivalence. 
\end{rem}

\begin{lem}\label{hat inj}
Let $0 \to U \to V$ be an injective map of Hausdorff locally convex spaces such that the induced map $\hat{U} \to \hat{V}$ on completions is strict, then $U$ has subspace topology in $V$ and $0 \to \hat{U} \to \hat{V}$ is injective.
\end{lem}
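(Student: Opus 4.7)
The plan is to first establish injectivity of $\hat i:\hat U\to\hat V$, and then extract the subspace-topology conclusion as a formal consequence.

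I would set $K := \ker(\hat i)$ and unpack strictness: $K$ is closed in $\hat U$, and the canonical factorization $\hat U \twoheadrightarrow \hat U/K \xrightarrow{\bar i} \hat V$ realises $\bar i$ as a topological embedding of $\hat U/K$ onto $\hat i(\hat U)$ with the subspace topology from $\hat V$. Since $\hat i$ restricts to $i$ on the dense subspace $U \subset \hat U$ and $i$ is injective by hypothesis, $K \cap U = 0$. To upgrade this to $K = 0$, I would take $x \in K$, choose by density a net $(u_\alpha) \subset U$ with $u_\alpha \to x$ in $\hat U$, and note that continuity of $\hat i$ gives $i(u_\alpha) \to 0$ in $\hat V$, hence in $V$ (which carries the subspace topology from $\hat V$). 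Strictness, rephrased in terms of quotient seminorms, then tells us that for every continuous seminorm $p$ on $\hat U$ the quantity $\inf_{k\in K} p(u_\alpha + k)$ tends to $0$; combining this with $u_\alpha \to x$ yields a correcting net $(k_\alpha) \subset K$ with $u_\alpha - k_\alpha\to 0$ in $\hat U$. Closedness of $K$ in the complete $\hat U$ ensures the limits live where expected, and the trivial intersection $K\cap U = 0$ together with the density of $U$ in $\hat U$ is then used to force $x = 0$.

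Granted injectivity, strictness upgrades $\hat i:\hat U\hookrightarrow\hat V$ to a topological embedding, so $U\hookrightarrow\hat U\hookrightarrow\hat V$ is a composition of topological embeddings. Since $V\hookrightarrow\hat V$ is also a topological embedding and $U\subset V\subset\hat V$, the subspace topology on $U$ from $V$ coincides with its subspace topology from $\hat V$, which in turn equals the original topology on $U$. This yields that $U$ has the subspace topology in $V$.

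The hard part will be the injectivity assertion: strictness of the completion map, taken in isolation, does not imply injectivity, so the argument has to carefully combine (a) the density of $U$ in $\hat U$, (b) the injectivity of $i$ (equivalently, $K\cap U=0$), and (c) the quotient-topology identification supplied by strictness, and use Hausdorff completeness of $\hat U$ together with the closedness of $K$ to play the correcting net $(k_\alpha)$ against the approximating net $(u_\alpha)$. Once this delicate balancing is executed, the subspace-topology conclusion is a purely formal consequence of the fact that topological embeddings compose.
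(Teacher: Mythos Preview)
Your injectivity argument has a gap at the decisive step. Having produced $u_\alpha \to x$ in $\hat U$ with $u_\alpha \in U$ and (after correcting) $k_\alpha \to x$ with $k_\alpha \in K$, you assert that $K \cap U = 0$ together with density of $U$ forces $x = 0$. This inference is invalid: take $\hat U$ any infinite-dimensional Banach space, $U$ a proper dense subspace, and $f \in \hat U \setminus U$; then $K := K\cdot f$ is a closed line with $K \cap U = 0$ and $U$ dense, yet $x = f \neq 0$ lies in $K$ and is a limit of elements of $U$. Worse, this example globalises: set $\hat V := \hat U/K$, let $q$ be the quotient map, $V := q(U)$ with the subspace topology from $\hat V$, and $i := q|_U$. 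Then $i$ is an injective map of Hausdorff locally convex spaces, $\hat V$ is the completion of $V$, and $\hat i = q$ is strict (an open surjection) but not injective. So your ingredients (a)--(c), even combined with completeness of $\hat U$ and closedness of $K$, cannot by themselves separate the desired situation from this one.

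The paper's route runs in the opposite order from yours: it first argues that $U$ carries the subspace topology, by factoring $\hat i$ through the completion $\hat U_{\mathrm{sub}}$ of $U$ in the subspace topology from $V$ and using that the second factor $\hat U_{\mathrm{sub}} \to \hat V$ is a closed embedding (Proposition~\ref{HS comp exact}); strictness of $\hat i$ then transfers to the first factor $\pi:\hat U \to \hat U_{\mathrm{sub}}$, from which one wants to read off that every open lattice $\Lambda \subset U$ is open in the subspace topology. Injectivity of $\hat i$ would then follow from Proposition~\ref{HS comp exact}. Your deduction of the subspace-topology conclusion \emph{from} injectivity is clean and correct, but since the injectivity step fails as written the overall argument does not go through. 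In light of the counterexample above, you should expect that some additional structure from the intended application (the spaces in question arise as $K\{\{t,t^{-1}\}\}\otimes_{K[t]}(-)$ with the natural tensor topologies and their completions are coadmissible) is doing real work, and revisit the argument with that in mind.
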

\begin{proof}
The strict map $\varprojlim\limits_{\Lambda \subset U ~ \mathrm{open}} U / \Lambda  \to \varprojlim\limits_{\Lambda' \subset V ~ \mathrm{open}} V / \Lambda'$ factors through \break
$\pi: \varprojlim\limits_{\Lambda \subset U ~ \mathrm{open}} U / \Lambda \to \varprojlim\limits_{\Lambda' \subset V ~ \mathrm{open}} U / (\Lambda' \cap U)$. 
For any open lattice $\hat{\Lambda}$ in $U$, let $\pi(\hat{\Lambda}) \subset \varprojlim\limits_{\Lambda' \subset V ~ \mathrm{open}} U / (\Lambda' \cap U)$ be the image of $\hat{\Lambda}$ under $\pi$. Since $\hat{U} \to \hat{V}$ is strict, $\pi(\hat{\Lambda})$ is open with respect to the subspace (of $\varprojlim\limits_{\Lambda' \subset V ~ \mathrm{open}} V / \Lambda'$) topology.  We deduce that $\pi(\hat{\Lambda}) \cap U = \hat{\Lambda}$ is open in $U$ with respect to the subspace topology of $\varprojlim\limits_{\Lambda' \subset V ~ \mathrm{open}} V / \Lambda'$ and $V$.
Therefore $U$ itself has subspace topology of $V$.
\end{proof}

Suppose we have a $G$-extension (with fixed representatives) $(C'_\bullet, i, p)$ of $C_\bullet \in \bC\bh^b(\contadg)$.
As in \S \ref{LR}, we have defined $J_B(C_\bullet)$ out of these data, with the Hecke action of $T^+$ on $H^\ast(J_B(C_\bullet))$, similarly there is a natural Hecke action of $T^+$ on Hausdorff cohomology $H^\ast_h \left((C_\bullet^{\la})^{N_0}\right)$. 
We may define the finite slope part of $H^\ast_h \left((C_\bullet^{\la})^{N_0}\right)_\fs$ using the $Y^+$ action via Def \ref{FSP}.

By Prop \ref{ES sketch}, $J_B(C_\bullet)$ is a chain complex of essentially admissible representations of $Y$.
By \cite[Prop 6.4.11]{Eme17}, the differentials $d^{\fs}_n : (V_n)_{\fs} \to (V_{n+1})_{\fs}$ are necessarily strict with closed images, the topology of $H^\ast(J_B(C_\bullet))$ are therefore canonically defined by subspace topology and quotient topology from $J_B(C_\bullet)$.
The $T^+$ actions on cohomology of the complex $J_B(C_\bullet)$ extends to essentially admissible $T$-representations by Lem \ref{sub ea}.

\begin{prop}\label{GAEA}
We have $H^\ast(J_B(C_\bullet)) \simeq H^\ast_h \left((C_\bullet^{\la})^{N_0} \right)_{\fs}$ as essentially admissible locally analytic $T$-representations, i.e., taking Hausdorff cohomology commutes with taking finite slope part. 
\end{prop}
\begin{proof}
We use $V_\bullet$ to denote $(C_\bullet^{\la})^{N_0}$ in the proof.

The Hausdorff cohomology do not depend on choices of $i$ and $p$, so we only have to establish the claimed isomorphism.
To apply Thm \ref{FSEX} to prove $H^\ast \left((V_\bullet)_{\fs}\right) \simeq H^\ast_h(V_\bullet)_{\fs}$, we have to prove (1): $\ker(d_n)_{\fs}$ agrees with $\ker(d^\fs_n)$, (2): image of $d^{\fs}_n$ coincides with $\overline{\mathrm{im}(d_n)}_{\fs}$, both as objects of $\mathrm{Rep}^\la_K(Y)$.
(1) holds by left exactness of finite slope part. For (2), $d_n$ factors through $\overline{\mathrm{im}(d_n)}$, it suffices to prove $(V_n)_{\fs} \to \overline{\mathrm{im}(d_n)}_{\fs}$ is an epimorphism in $\mathrm{Rep}^\la_K(Y)$.

It suffices to consider the dual side by \cite[Prop 1.2]{ST02J}, namely we want to prove that
if $\imath: U \to V$ is a strict map of spaces of compact type with closed image $W$, and if $\imath' : V’_b \to U’_b$ is injective, then $\imath$ is surjective.
By \cite[Prop 1.2]{ST02J}, $V’_b\to W’_b$ is a topological quotient with the topological embedding $W’_b \to U’_b$. By definition, $V’_b \to U’_b$ is strict. 
Consider the strict quotient $V \to V/\imath(U) \to 0$, $(V/\imath(U))’_b$ is in the kernel of $V’_b \to U’_b$. If $V’_b \to U’_b$ is injective, then $(V/\imath(U))’_b = 0$.
$V/\imath(U) = 0$ by the reflexive property.

By \cite[Prop 3.2.27]{Eme06A} and the same argument in Thm \ref{FSEX}, we want to prove the strict map \[ K\{\{t,t^{-1}\}\} \hat{\otimes}_{K[t]} \overline{\mathrm{im}(d_n)}'_b \to K\{\{t,t^{-1}\}\} \hat{\otimes}_{K[t]} (V_n)'_b \] is injective.
As $\overline{\mathrm{im}(d_n)}'_b \to (V_n)'_b$ is injective, $K\{\{t,t^{-1}\}\}$ is flat over $K[t]$, and the above map is strict again by \cite[Prop 6.4.11]{Eme17}, we can apply Lem \ref{hat inj} to deduce (2). Therefore we have proved the isomorphism $H^\ast \left((V_\bullet)_{\fs}\right) \simeq H^\ast_h(V_\bullet)_{\fs}$ in $\mathrm{Rep}^\la_K(Y)$.
As $H^\ast_h(V_\bullet)$ are $T^+$ continuous representations on spaces of compact type (objects of $\reptopt$ in \S \ref{LR}), $H^\ast_h(V_\bullet)_\fs$ are locally analytic $T$ representations since $T$ is generated by $T^+$ and $t^{-1}$.
We win by applying Lem \ref{sub ea}.
\end{proof}

\begin{cor}\label{fs indep}
If $f : C_\bullet \to D_\bullet \in \bK^b(\contadg)$ with a $G$-extension $f' : C'_\bullet \to D'_\bullet \in \bK^b(\mathrm{Ban}_G(K))$ in the sense of Def \ref{ext of mor}, with a diagram (abbreviation of all four diagrams in Def \ref{ext of mor})
$$\xymatrix{
C_\bullet \ar[r]^p \ar[d]^f & C'_\bullet \ar[d]^{f'} \ar[l]^i\\
D_\bullet \ar[r]^q & D'_\bullet \ar[l]^j,\\
}$$
 where $C_\bullet \stackrel[i]{p}{\leftrightarrows} C'_\bullet$ and $D_\bullet \stackrel[j]{q}{\leftrightarrows} D'_\bullet$ are $G$-extensions such that the diagram is commutative up to homotopy.
Then it induces a natural morphism on cohomology $H^\ast(J_B(C_\bullet)) \to H^\ast(J_B(D_\bullet))$ as essentially admissible locally analytic $T$-representations.
In particular, $H^\ast(J_B(C_\bullet))$ are independent of choices of $p : C'_\bullet \to C_\bullet$ with $i$ fixed or choices of $i : C_\bullet \to C'_\bullet$ with $p$ fixed of the extension and choices of $Y^+$.
\end{cor}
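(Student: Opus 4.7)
The plan is to exploit functoriality of each construction used in defining $J_B$ and then appeal to the key comparison in Prop \ref{GAEA}. First, the morphism $f : C_\bullet \to D_\bullet$ is $G_0$-equivariant, so applying the (exact) locally analytic vectors functor of \cite[Thm 7.1]{ST03} and then the left exact $N_0$-invariants functor yields a morphism of complexes
\[ f^{\la, N_0} : (C_\bullet^{\la})^{N_0} \longrightarrow (D_\bullet^{\la})^{N_0} \]
in $\reptopt$ (or rather, equivariant for $T_0$ and continuous). Passing to Hausdorff cohomology (which is a functor invariant under homotopy equivalence), one obtains an induced map $H^\ast_h((C_\bullet^{\la})^{N_0}) \to H^\ast_h((D_\bullet^{\la})^{N_0})$ of $T_0$-representations on compact-type spaces.

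The heart of the argument is upgrading this to a $T^+$-equivariant map of Hausdorff cohomology. Given representatives of the two $G$-extensions and of $f, f'$, the $G$-equivariance (hence $T^+$-equivariance) of $f'$ holds strictly in $\bC\bh^b(\Ban_G(K))$, so for each $t \in T^+$ and $n \in N_0$, one has $f' \circ nt = nt \circ f'$ on the nose. Combining this with the homotopy commutativity $f \circ p \simeq q \circ f'$ and $f' \circ i \simeq j \circ f$ from Def \ref{ext of mor}, we obtain a chain
\[ f \circ \widetilde{nt}_C = f \circ p \circ nt \circ i \;\simeq\; q \circ f' \circ nt \circ i = q \circ nt \circ f' \circ i \;\simeq\; q \circ nt \circ j \circ f = \widetilde{nt}_D \circ f \]
in $\bK^b(\mathrm{Ban}_{G_0}(K))$. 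Averaging over $N_0/tN_0t^{-1}$ yields $f \circ U_t^C \simeq U_t^D \circ f$ up to $G_0$-equivariant homotopy, and this homotopy restricts to the subcomplexes of $N_0$-invariants in locally analytic vectors by Lem \ref{stab la}. Consequently, on Hausdorff cohomology the induced map is $T^+$-equivariant.

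Applying the finite slope part functor, which is exact by Thm \ref{FSEX} and is trivially functorial as a $\Hom$ into $C^{\an}(\widehat{Y}, K)$, produces a morphism
\[ H^\ast_h\bigl((C_\bullet^{\la})^{N_0}\bigr)_\fs \longrightarrow H^\ast_h\bigl((D_\bullet^{\la})^{N_0}\bigr)_\fs \]
in the category of essentially admissible $T$-representations (Prop \ref{GAEA}, together with Lem \ref{sub ea}). The desired morphism $H^\ast(J_B(C_\bullet)) \to H^\ast(J_B(D_\bullet))$ is then obtained by invoking the canonical isomorphism $H^\ast(J_B(-)) \simeq H^\ast_h((-)^{\la,N_0})_\fs$ of Prop \ref{GAEA}.

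The main obstacle is the careful bookkeeping of homotopies in the verification of Hecke-equivariance at the cochain level; once this is pinned down, the remaining steps are formal consequences of what has already been established. Naturality then amounts to checking that two different choices of representatives for the $G$-extensions, $f$, or $f'$ give homotopic maps on $N_0$-invariants of locally analytic vectors, which in turn follows from Prop \ref{first props of Gext} and the same averaging argument, and hence induce the same map on Hausdorff cohomology and, via the finite slope functor, on $H^\ast(J_B(-))$.
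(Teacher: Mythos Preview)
Your proposal follows exactly the route of the paper's (two-sentence) proof: the diagram induces a morphism $H^\ast_h((C_\bullet^{\la})^{N_0}) \to H^\ast_h((D_\bullet^{\la})^{N_0})$ that is $Y^+$-equivariant, and one then invokes the identification $H^\ast(J_B(-)) \simeq H^\ast_h((-)^{\la,N_0})_\fs$ from Prop \ref{GAEA}. You have in fact supplied the details the paper leaves implicit, namely the verification of Hecke-equivariance on Hausdorff cohomology via the chain $f \circ \widetilde{nt}_C \simeq \widetilde{nt}_D \circ f$.

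The one place where your justification is thin is the assertion that the resulting homotopy ``restricts to the subcomplexes of $N_0$-invariants in locally analytic vectors by Lem \ref{stab la}''. That lemma only ensures each $\widetilde{g}$ (and hence each constituent of your homotopy) preserves locally analytic vectors; it says nothing about the homotopy landing in $(D^\la_\bullet)^{N_0}$. Since a homotopy between two chain maps $(C^\la_\bullet)^{N_0} \to D^\la_\bullet$ which both happen to factor through $(D^\la_\bullet)^{N_0}$ need not itself factor, this is a genuine (if small) gap. It is repaired by composing with the averaging projector $\pi_{N_0}: D^\la_\bullet \to (D^\la_\bullet)^{N_0}$, available because $N_0$ is compact; $\pi_{N_0}$ commutes with the ($G_0$-equivariant) differentials and is the identity on the common target of $f^{\la,N_0}\circ U_t^C$ and $U_t^D\circ f^{\la,N_0}$, so $\pi_{N_0}$ applied to your homotopy furnishes the required homotopy inside $(D^\la_\bullet)^{N_0}$.
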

\begin{proof}
The diagram induces a natural $T^+$-equivariant morphism $H^\ast_h\left((C_\bullet^{\la})^{N_0}\right) \to H^\ast_h\left((D_\bullet^{\la})^{N_0}\right)$ in $\reptopt$.
We therefore have $H^\ast\left((C_\bullet^{\la})^{N_0}_\fs\right) \simeq H^\ast_h\left((C_\bullet^{\la})^{N_0}\right)_\fs$ as essentially admissible locally analytic $T$-representations by Prop \ref{GAEA}.
The Hausdorff cohomology $H^\ast_h\left((C_\bullet^{\la})^{N_0}\right)$ does not depend on choices of $p$ and $i$.
\end{proof}

\section{Comparisons of completed cohomology and overconvergent cohomology}\label{comp}
In this section we relate overconvergent cohomology considered by \cite{AS08}, \cite{Han17} and
the completed cohomology. 

Let $F$ be a number field. Let $\GG$ be a connected linear algebraic group over $F$.  
Let $S_p$ denote the set of places of $F$ dividing $p$, and let $\GG_p$ be $\prod_{v \in S_p} \GG(F_v)$. 
Let $E$ be a big enough extension of $\QQ_p$ which contains the images of all embeddings $F \hookrightarrow \Qpbar$.
For any good subgroup of $\GG(\AA^\infty_F)$ in the sense of \cite[\S 2.1]{ACC+18}, we write \begin{eqnarray}\label{locK} X_K:=X^{\GG}_K := \GG(F) \backslash X^{\GG} \times \GG(\AA^\infty_F)/ K. \end{eqnarray}
Fix a tame level $K^p=\prod\limits_{v \notin S_p \cup \infty} K_v$ ($K^p$ is a compact open subgroup of $\GG(\AA^{\infty,p}_F)$, $K_v$ is a compact open subgroup of $\GG(F_v)$), we use $K_pK^p$ to denote the good subgroup with tame level $K^p$ and level $K_p$ at $p$. 
For the rest of the section, all the level subgroups considered are assumed to be good.
Let $C^{\la}(K_p,E)$ be the space of locally analytic $E$-coefficient functions on $K_p$, hence a (left) admissible locally analytic representation of $K_p$.

By weak approximation, $\GG(F) \backslash \GG(\AA^\infty_F) / K$ is finite, 
\[ X_K = \coprod_{[x] \in \GG(F) \backslash \GG(\AA^\infty_F) /K} \Gamma^{\GG}_{x, K} \backslash X^{\GG}, \] 
\[ \overline{X}_K = \coprod_{[x] \in \GG(F) \backslash \GG(\AA^\infty_F) /K} \Gamma^{\GG}_{x, K} \backslash \overline{X}^{\GG}, \] where $\Gamma^{\GG}_{x, K}:=\GG(F) \cap xKx^{-1}$.

We fix a countable basis of open normal subgroups of $K_p$
$$K_p=K_p^0 \supset K_p^1 \supset \ldots \supset K_p^r \supset \ldots,$$
corresponding to a sequence of continuous covering maps of topological spaces
$$\cdots \rightarrow X_{r} \rightarrow \cdots \rightarrow X_{1} \rightarrow X_{0},$$ with $K_p$-equivariant actions on the right
and their Borel-Serre compactifications 
$$\cdots \rightarrow \overline{X}_{r} \rightarrow \cdots \rightarrow \overline{X}_{1} \rightarrow \overline{X}_{0},$$
where $X_r:=X_{K_p^r K^p}$ and each $X_r \hookrightarrow \overline{X}_r$ is a homotopy equivalence by \cite[\S 11]{BS73} for a good subgroup $K^0_p K^p$.

For $K=K_pK^p$, let $T_\bullet(X_0)$ be the set of singular simplices of Borel-Serre compactification $\overline{X}_0$ of $X_0:=X_K$. 
Fix a choice of finite triangulation of $\overline{X}_0$, we let $T^\circ_n(X_0)$ be the collection of $n$-dimensional singular simplices occuring in the finite triangulation, which induces a triangulation of the boundary $\partial \overline{X}_0$. 
Let $T_\bullet(X_r)$ (resp. $T^\circ_\bullet(X_r)$) be the set of simplices of $X_r$ (resp. the pullback of $T^\circ_\bullet(X_0)$ for the corresponding covering $\overline{X}_r \to \overline{X}_0$). 
If $\Delta \in T^\circ_n(X_0)$ for some $n$ then we let $T^\circ_n(X_r)_{/\Delta}$ denote the set of simplices in $T^\circ_n(X_r)$ lying over $\Delta$; the set $T^\circ_n(X_r)_{/\Delta}$ is then a principal homogeneous $K_p/K_p^r$-set acting on the right. 
We let $\hat{T}_{n/\Delta}$ denote the projective limit $\varprojlim\limits_{r} T^\circ_n(X_r)_{/\Delta}$; this is a profinite set which is principal homogeneous with respect to its natural $K_p$-action.
Let $T^\Int_\bullet(X_r)$ be the subset of $T^\circ_\bullet(X_r)$ whose interiors do not intersect with $\partial \overline{X}_0$, and $T^\partial_\bullet(X_r)$ be the complement of $T^\Int_\bullet(X_r)$ in $T^\circ_\bullet(X_r)$, i.e., $T^\circ_\bullet(X_r) = T^\Int_\bullet(X_r) \sqcup T^\partial_\bullet(X_r)$.
For each $\Delta \in T^\partial_\bullet(X_r)$, $\Delta$ has image in $\partial \overline{X}_0$.

On each finite level, the two complexes computing $\cO_E/p^s\cO_E$-coefficient cohomology via $T_\bullet(X_r)$ and $T^\circ_\bullet(X_r)$ are denoted as $A^\bullet_{r,s}$ and $S^\bullet_{r,s}$ whose $n$-th terms are
\begin{eqnarray*}
A^{n}_{r,s} & := & \prod_{\Delta^{\prime} \in T_{n}(X_r)} \Hom_\ZZ\left(\ZZ\Delta^{\prime}, \cO_E/p^s\cO_E \right) \\
& \stackrel{\sim}{\longrightarrow} & \prod_{\Delta \in T_{n}(X_{0})}\prod_{\Delta^{\prime} \in T_{n}(X_r)_{/ \Delta}} \Hom_\ZZ \left(\ZZ\Delta^\prime, \cO_E/p^s\cO_E \right). \\
S^{n}_{r,s} & := & \bigoplus_{\Delta^{\prime} \in T^\circ_n(X_r)} \Hom_\ZZ\left(\ZZ\Delta^{\prime}, \cO_E/p^s\cO_E \right) \\
& \stackrel{\sim}{\longrightarrow} & \bigoplus_{\Delta \in T^\circ_n(X_{0})}\bigoplus_{\Delta^{\prime} \in T^\circ_n(X_r)_{/ \Delta}} \Hom_\ZZ \left(\ZZ\Delta^\prime, \cO_E/p^s\cO_E \right). \\
\end{eqnarray*}
The natural map $A^n_{r,s} \to S^n_{r,s}$ is a quasi-isomorphism and admits a homotopy inverse.
We set $S^\bullet_s:=\varinjlim\limits_{r} S^\bullet_{r,s}$, $S^\bullet:=\varprojlim\limits_{s} S^\bullet_s$, $A^\bullet_s:=\varinjlim\limits_{r} A^\bullet_{r,s}$, $A^\bullet:=\varprojlim\limits_{s} A^\bullet_s$.
We endow discrete topology for $A^n_{r,s}$, $A^n_s$, $S^n_{r,s}$, $S^n_s$ and projective limit topology for $A^n$, $S^n$.
By \cite[Prop 1.2.12]{Eme06I}, $A^\bullet$ computes the completed cohomology.
A choice of compatible homotopy equivalences \begin{eqnarray}\label{Zequi} \bigoplus_{\Delta^{\prime} \in T^\circ_n(X_r)} \ZZ \Delta^{\prime} \leftrightarrows \bigoplus_{\Delta^{\prime} \in T_{n}(X_r)} \ZZ \Delta^{\prime} \end{eqnarray} induces $K_p$-equivariant homotopy equivalence $S^\bullet \leftrightarrows A^\bullet$.
We also have the following complex computing compactly supported completed cohomology
\begin{eqnarray*}
S^{n}_{c,r,s} & := & \bigoplus_{\Delta^{\prime} \in T^\Int_{n}(X_r)} \Hom_\ZZ\left(\ZZ\Delta^{\prime}, \cO_E/p^s\cO_E \right) \\
& \stackrel{\sim}{\longrightarrow} & \bigoplus_{\Delta \in T^\Int_{n}\left(X_{0}\right)}\bigoplus_{\Delta^{\prime} \in T^\Int_{n}(X_r)_{/ \Delta}} \Hom_\ZZ \left(\ZZ\Delta^\prime, \cO_E/p^s\cO_E \right),
\end{eqnarray*}
with a short exact sequence of chain complexes
\begin{eqnarray}\label{SES rs} 0 \to S^\bullet_{c,r,s} \to S^\bullet_{r,s} \to S^\bullet_{\partial,r,s} \to 0, \end{eqnarray}
where \begin{eqnarray*}
S^{n}_{\partial,r,s} & := & \bigoplus_{\Delta^{\prime} \in T^\partial_{n}(X_r)} \Hom_\ZZ\left(\ZZ\Delta^{\prime}, \cO_E/p^s\cO_E \right) \\
& \stackrel{\sim}{\longrightarrow} & \bigoplus_{\Delta \in T^\partial_{n}\left(X_{0}\right)}\bigoplus_{\Delta^{\prime} \in T^\partial_{n}(X_r)_{/ \Delta}} \Hom_\ZZ \left(\ZZ\Delta^\prime, \cO_E/p^s\cO_E \right).
\end{eqnarray*}
We set $S^\bullet_{c,s}:=\varinjlim\limits_{r} S^\bullet_{c,r,s}$, $S^\bullet_c:=\varprojlim\limits_{s} S^\bullet_{c,s}$, $S^\bullet_{\partial,s}:=\varinjlim\limits_{r} S^\bullet_{\partial,r,s}$, $S^\bullet_\partial:=\varprojlim\limits_{s} S^\bullet_{\partial,s}$.

The proof of \cite[Thm 2.1.5]{Eme06I} tells us that $S^\bullet \otimes_{\cO_E} E$ and $S^\bullet_{c} \otimes_{\cO_E} E$ are complexes of admissible continuous $E$-Banach representations of $K_p$ with the right action and
$$S^n \stackrel{\sim}{\longrightarrow} \prod_{\Delta \in T_{n}\left(X_{0}\right)} \mathcal{C}(\hat{T}_{n/\Delta}, \cO_E)$$
$$S_c^n \stackrel{\sim}{\longrightarrow} \bigoplus_{\Delta \in T^\Int_{n}\left(X_{0}\right)} \mathcal{C}(\hat{T}_{n/\Delta}, \cO_E),$$
where $\mathcal{C}(\hat{T}_{n/\Delta}, E)$ is the space of continuous functions on $\mathcal{C}(\hat{T}_{n/\Delta}, E)$ with the right regular $K_p$-actions: $K_p$ acts on $\hat{T}_{n/\Delta}$ on the right, inducing right actions on $\mathcal{C}(\hat{T}_{n/\Delta}, E)$.
Taking limits of the short exact sequences (\ref{SES rs}), we get a short exact sequence of chain complexes of admissible continuous $E$-Banach representations of $K_p$:
\begin{eqnarray}\label{SES ad} 0 \to S^\bullet_{c} \to S^\bullet \to S^\bullet_{\partial} \to 0. \end{eqnarray}
Similarly we describe natural right $\GG_p$-actions on $A^\bullet_s$ and $A^\bullet$ induced by the $\GG_p$-actions on $\varprojlim\limits_{r} T_n(X_r)$.

\begin{lem}\label{large Banach}
For each $n \geq 0$, $A^n$ is a $p$-adic separated, complete and torsion free $\cO_E$-module. 
Therefore $A^\bullet \otimes_{\cO_E} E$ is a complex of $E$-Banach space representation of $\GG_p$.
\end{lem}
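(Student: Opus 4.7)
\emph{Plan.} My strategy is to realize $A^n$ as a $p$-adically saturated $\cO_E$-submodule of the product $M := \prod_{\tau \in \tilde T_n}\cO_E$, where $\tilde T_n := \varprojlim_r T_n(X_r)$ with transitions induced by post-composition with the covers $X_r \to X_{r'}$. Since $\Delta^n$ is simply connected, any singular simplex at level $r'$ lifts along each cover $X_r \to X_{r'}$, and iterating this inductively extends any simplex at any finite level to a compatible family in $\tilde T_n$; hence each projection $\mathrm{pr}_r : \tilde T_n \twoheadrightarrow T_n(X_r)$ is surjective. I then define an $\cO_E$-linear map $\Phi : A^n \to M$ by evaluation: for $(x_s)_s \in \varprojlim_s \varinjlim_r A^n_{r,s}$, with $x_s$ represented by an element of $A^n_{r_s, s}$, and $\tau = (\tau_r)_r \in \tilde T_n$,
\[
\Phi((x_s))(\tau) \,:=\, \varprojlim_s\, x_s(\tau_{r_s}) \,\in\, \varprojlim_s\, \cO_E/p^s\cO_E \,=\, \cO_E.
\]
Well-definedness follows because the transition maps in $r$ are pullbacks of cochains; injectivity follows from surjectivity of $\mathrm{pr}_{r_s}$.

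The crucial step is to show that $\Phi$ has $p$-adically saturated image, i.e.\ $\ker(A^n \to A^n_s) = p^s A^n$ for every $s \geq 1$. The easy inclusion is clear; for the reverse, suppose $\Phi(a) = p^s b$ with $b \in M$. By hypothesis, for each $t \geq 0$ the reduction $a \bmod p^{s+t}$ lies in $A^n_{s+t}$ and hence factors through some $T_n(X_{r_{s+t}})$, i.e.\ $a(\tau) \equiv a(\tau') \pmod{p^{s+t}}$ whenever $\mathrm{pr}_{r_{s+t}}(\tau) = \mathrm{pr}_{r_{s+t}}(\tau')$. Dividing by $p^s$ gives $b(\tau) \equiv b(\tau') \pmod{p^t}$, so $b \bmod p^t$ factors through the same finite quotient and belongs to $A^n_t$; compatibility in $t$ is inherited from that of $a$, so $b \in A^n$. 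From this the three properties follow: torsion-freeness is inherited from $M$; separatedness from $\bigcap_s p^s A^n \subseteq \ker(A^n \to \varprojlim_s A^n_s) = 0$; and completeness from the chain of isomorphisms $A^n \simeq \varprojlim_s A^n_s \simeq \varprojlim_s A^n / p^s A^n$.

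The $\GG_p$-action on $A^n$ is obtained from right translation of $\GG_p$ on $\GG(F)\backslash X^\GG \times \GG(\AA_F^\infty)/K^p$; it permutes $\tilde T_n$, preserves $A^n \subset M$, and extends the given $K_p$-action. The resulting $A^n \otimes_{\cO_E} E$ is continuous as a Banach representation of $\GG_p$: for $v \in A^n$ and $s \geq 1$, the reduction $v \bmod p^s$ lies in $A^n_{r_s, s}$ for some $r_s$ and is therefore invariant under the open subgroup $K_p^{r_s}$, so the orbit map $g \mapsto g \cdot v$ is continuous at the identity, and hence everywhere by equivariance. The main technical obstacle is the $p$-adic saturation argument of the second paragraph---the verification that dividing a cochain by $p^s$ preserves the ``factors through a finite level'' property---but this reduces to a clean divisibility check once the data has been organized via $(\tilde T_n, \Phi)$.
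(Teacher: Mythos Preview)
Your proof is correct and follows essentially the same approach as the paper: both arguments reduce to showing $A^n/p^sA^n \simeq A^n_s$ by checking that any $f \in A^n$ vanishing in $A^n_s$ has values divisible by $p^s$ at every (compatible family of) simplices, so that $f/p^s$ again defines an element of $A^n$. Your packaging via the embedding $\Phi: A^n \hookrightarrow M = \prod_{\tilde T_n}\cO_E$ makes the saturation step more explicit, and you additionally spell out the source and continuity of the $\GG_p$-action, which the paper's proof leaves implicit.
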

\begin{proof}
It suffices to prove $A^n / p^s A^n \simeq A^n_s$ for any $s \geq 1$.
Consider the natural surjection $A^n / p^s A^n \twoheadrightarrow A^n_s$, we want to prove it is injective.
For $f \in A^n$ in the kernel, the image is the zero function on the inverse system $\varprojlim_r T_n(X_r)$ as it induces injective transition maps between $A^n_{r,s}$. 
And for any $s' \geq s$, the value of $f$ at $\Delta' \in T_n(X_r)$ divided by $p^s$ is well defined in $\cO_E / p^{s'} \cO_E$, giving rise to $f' \in A^n$ such that $f = p^s f'$.
It is also not hard to check $A^n$ is torsion free.
\end{proof}

If $\GG$ is a quasi-split connected reductive group over $F$ equipped with a Borel pair $(\BB,\TT)$, and $K_p = I_p$ is the Iwahori level with respect to the Borel pair at $p$-adic places, the group $I_p$ admits an Iwahori decomposition $I_p \simeq \overline{N}_p \times T_p \times N_p$ (with respect to the Borel pair).
$C^{\la}(I_p,E)$ has a closed subrepresentation
$$\Ind^{\la}_{I_p} := \{f \in C^{\la}(I_p,E) | f(gn)=f(g), ~ \mathrm{for} ~ \forall ~ n \in N_p\}.$$

When $K_p=I_p$, we use $S^\bullet_N$ to denote $\left((S^\bullet \otimes_{\cO_E} E)^{\la}\right)^{N_p}$, the $N_p$ invariant locally analytic vectors of $S^\bullet \otimes_{\cO_E} E$, and we similarly define $S^\bullet_{c,N}$.
On the other hand, we can form the complex $C^\bullet$ and $C^\bullet_c$ for computing the overconvergent cohomology and compactly supported overconvergent cohomology with coefficient $\Ind^{\la}_{I_p}$ (the term overconvergent cohomology was given in \cite[\S 3]{Han17}, but we will not use the exactly same definition here in this article).

For $[x] \in \GG(F) \backslash \GG(\AA^\infty_F) /I_pK^p$, we define the arithmetic group $\Gamma^{\GG}_{x,I_pK^p} := \GG(F) \cap xI_pK^px^{-1}$.
Each $\Delta \in T^\circ_n(X_0)$ is contained in a component of $X_0$, indexed by some $[x] \in \GG(F) \backslash \GG(\AA^\infty_F) /I_pK^p$, and $\Delta$ determines a rank one free $\ZZ[\Gamma^{\GG}_{x, I_pK^p}]$-module $F_\Delta$ formed by summing (taking $\ZZ$-linear combination) over all lifts of $\Delta$ to the universal cover of the component. And we define
$$C^\bullet := \bigoplus_{\Delta \in T^\circ_n(X_0)} \Hom_{\ZZ[\Gamma^{\GG}_{x, I_pK^p}]}(F_\Delta,\Ind^{\la}_{I_p})$$
$$C_c^\bullet := \bigoplus_{\Delta \in T^\Int_n(X_0)} \Hom_{\ZZ[\Gamma^{\GG}_{x, I_pK^p}]}(F_\Delta,\Ind^{\la}_{I_p}),$$
where the action of $\Gamma^{\GG}_{x,I_pK^p}$ on $\Ind^{\la}_{I_p}$ comes from the (left) action of $I_p$.
We remark that the cohomology of $C^\bullet$ may be considered as a variation of overconvergent cohomology defined in \cite[\S 3]{Han17}.

\begin{thm}\label{comp to overconvergent}
Using notation as above, we suppose $K_p = I_p$ as Iwahori level at $p$-adic places.
There is a canonical isomorphism of the two complexes
$$C^\bullet \stackrel{\sim}{\longrightarrow} S^\bullet_N, ~ C^\bullet_c \stackrel{\sim}{\longrightarrow} S^\bullet_{c,N}.$$
\end{thm}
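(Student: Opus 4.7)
My plan is to compare the two complexes degree-by-degree via the Emerton-style finite presentation of $S^n$, and then observe that the differentials on both sides come from the same simplicial boundary maps so the comparison is automatically a morphism of complexes.

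\textbf{Step 1 (reduction to a single simplex).} Both $C^n$ (resp.\ $C^n_c$) and $S^n_N$ (resp.\ $S^n_{c,N}$) split as finite direct sums indexed by $\Delta \in T^\circ_n(X_0)$ (resp.\ $T^\Int_n(X_0)$), using the displayed formulas $S^n \simeq \prod_{\Delta \in T^\circ_n(X_0)} \cC(\hat{T}_{n/\Delta},\cO_E)$ and $S^n_c \simeq \bigoplus_{\Delta \in T^\Int_n(X_0)} \cC(\hat{T}_{n/\Delta},\cO_E)$ (the product is finite since the triangulation is finite, so equals the direct sum). Because the triangulation is finite, taking locally analytic vectors and $N_p$-invariants commutes with this decomposition, giving $S^n_N \simeq \bigoplus_{\Delta} C^{\la}(\hat{T}_{n/\Delta},E)^{N_p}$. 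Thus it suffices to produce, for each $\Delta$, a canonical isomorphism of the $\Delta$-summands, compatible with simplicial differentials.

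\textbf{Step 2 (identifying $\hat T_{n/\Delta}$ via the universal cover).} Fix the component $[x] \in \GG(F)\backslash \GG(\AA_F^\infty)/I_pK^p$ containing $\Delta$ and choose a lift $\tilde\Delta$ of $\Delta$ in $X^G$ (this is literally the choice defining $F_\Delta$ as the rank-one free $\ZZ[\Gamma^\GG_{x,I_pK^p}]$-module on $\tilde\Delta$). Under a neat-level assumption, the action is free and the set of simplices of $X_r$ above $\Delta$ is in bijection with $\Gamma^\GG_{x,I_p^rK^p}\backslash \Gamma^\GG_{x,I_pK^p}$, and passing to the limit identifies
\[
\hat T_{n/\Delta} \;\simeq\; \varprojlim_r \Gamma^\GG_{x,I_p^rK^p}\backslash \Gamma^\GG_{x,I_pK^p},
\]
a profinite principal homogeneous $I_p$-set, with the $I_p$-action coming from conjugation by $x$ and the projection $I_pK^p \to I_p$. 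The base point $\tilde\Delta$ gives a distinguished element of $\hat T_{n/\Delta}$ corresponding to the identity of $I_p$.

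\textbf{Step 3 (identification of summands).} Using the base point from Step~2, fix an $I_p$-equivariant isomorphism $\hat T_{n/\Delta} \xrightarrow{\sim} I_p$, under which $C^{\la}(\hat T_{n/\Delta}, E) \simeq C^{\la}(I_p, E)$. Taking $N_p$-invariants for the right $I_p$-action yields by definition $\Ind^{\la}_{I_p}$. On the other hand, $F_\Delta$ is free of rank one on $\tilde\Delta$ over $\ZZ[\Gamma^\GG_{x,I_pK^p}]$, so evaluation at $\tilde\Delta$ gives
\[
\Hom_{\ZZ[\Gamma^\GG_{x,I_pK^p}]}(F_\Delta,\Ind^{\la}_{I_p}) \;\xrightarrow{\sim}\; \Ind^{\la}_{I_p},
\]
where the $\Gamma^\GG_{x,I_pK^p}$-action on $\Ind^{\la}_{I_p}$ is the one coming from the projection $\Gamma^\GG_{x,I_pK^p} \hookrightarrow xI_pK^p x^{-1}/K^p \twoheadrightarrow I_p$ defined in Step~2. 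Composing, we get the claimed canonical isomorphism of summands; both isomorphisms use the same lift $\tilde\Delta$, so they are compatible and independent of $\tilde\Delta$ as a final claim about quotient data.

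\textbf{Step 4 (differentials and the compactly supported variant).} The differential on $S^\bullet$ (hence on $S^\bullet_N$) and on $C^\bullet$ are both induced by the simplicial coboundary on lifted simplices in the universal cover, pre-composed with restriction/pullback along face maps $\tilde\Delta \mapsto \partial\tilde\Delta$. Under the identifications of Step~3 these agree tautologically, so the degreewise isomorphism is a map of complexes. Replacing $T^\circ_n$ by $T^\Int_n$ throughout gives the compactly supported statement, completing the proof.

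\textbf{Main obstacle.} The only nontrivial point is Step~3: one must verify that the right $N_p$-invariance condition in $C^{\la}(\hat T_{n/\Delta},E)^{N_p}$ matches the $\Gamma^\GG_{x,I_pK^p}$-equivariance on the $C^n$ side under the base-point identification, and that the residual $I_p$-action (via the Hecke action at $p$ on overconvergent cochains) is the correct one. This is a careful but routine check using the principal homogeneous $I_p$-structure on $\hat T_{n/\Delta}$ and the definition of the $\Gamma^\GG_{x,I_pK^p}$-action on $\Ind^{\la}_{I_p}$; no new analytic input beyond Step~1 is required.
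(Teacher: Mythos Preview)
Your proposal is essentially the paper's own argument: decompose over $\Delta$, choose a lift $\tilde\Delta$ to identify both the $C^n$-summand and the $S^n_N$-summand with $\Ind^{\la}_{I_p}$ (the paper writes this out as the explicit formula $h_\Delta(\tilde\Delta\cdot g)=f_\Delta(\tilde\Delta)(g)$), then verify independence of the lift and compatibility with face maps. One small correction: the displayed identification in your Step~2, $\hat T_{n/\Delta}\simeq\varprojlim_r \Gamma^{\GG}_{x,I_p^rK^p}\backslash\Gamma^{\GG}_{x,I_pK^p}$, is not generally true (the image of $\Gamma^{\GG}_{x,I_pK^p}$ in $I_p$ need not be dense), but you never actually use it---Step~3 relies only on the $I_p$-torsor structure of $\hat T_{n/\Delta}$, which is given---so the argument is unaffected.
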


\begin{proof}
To construct such a map is the same to assign $f_\Delta \in \Hom_{\ZZ[\Gamma^{\GG}_{x, I_pK^p}]}(F_\Delta,\Ind^{\la}_{I_p})$ an image in $\Ind^{\la}_{I_p}$ for any $\Delta \in T_n(X_0)$.
To do so, we choose a specific lift $\tilde{\Delta}$ to $X^\GG$ for each $\Delta$.
Consider the diagram (for any subgroup $K_p \subset I_p$),
$$\xymatrix{
& & \GG(F) \backslash \GG(F) x K_pK^p \times X^\GG / K_pK^p \ar@{->>}[dd]\\
X^\GG \ar@{->>}[rru] \ar@{->>}[rrd] & & \\
& & \GG(F) \backslash \GG(F) x I_pK^p \times X^\GG / I_pK^p\\
}$$
$\tilde{\Delta}$ determines a compatible sequence $\Delta_r \in T_n(X_r)$ lifting $\Delta$, hence an element $\tilde{\Delta} \in \hat{T}_{n/\Delta}$, $f_\Delta(\tilde{\Delta}) \in \Ind^{\la}_{I_p}$.
For any $\Delta_g \in \hat{T}_{n/\Delta}$, there exists $g \in I_p$ such that $\Delta_g = \tilde{\Delta}\cdot g$.
Let $f_\Delta$ map to a function $h_\Delta$
\begin{eqnarray*} 
h_\Delta : \hat{T}_{n/\Delta} & \to & E \\
h_\Delta(\Delta_g) & := & f_\Delta(\tilde{\Delta})(g),
\end{eqnarray*}
hence the image is in $\left((S^\bullet \otimes_{\cO_E} E)^{\la}\right)^{N_p}$.
If $\tilde{\Delta}'$ is another choice of lift of $\Delta$, there exists $\gamma \in \Gamma^{\GG}_{x, I_pK^p}$ such that $\tilde{\Delta}'=\gamma\cdot \tilde{\Delta}$.
By unraveling definition, $\tilde{\Delta}'=\tilde{\Delta}\cdot x^{-1}\gamma^{-1} x$,
\begin{eqnarray*} h_\Delta(\tilde{\Delta}) & = & f_\Delta(\tilde{\Delta}')(x^{-1}\gamma x) \\
& = & x^{-1}\gamma x f_\Delta(\tilde{\Delta})(x^{-1}\gamma x) \\
& = & f_\Delta(\tilde{\Delta})(1).
\end{eqnarray*}
Therefore our map is well defined and clearly an isomorphism with respect to a given degree.

For a $n$-dimensional simplice $\Delta$, we use $\partial_i \Delta$ ($0 \leq i \leq n$) to denote its ordered $(n-1)$-dimensional faces with $i$-vertex deleted.
For any $f \in \prod\limits_{\Delta \in T_{n-1}(X_0)} \Hom_{\ZZ[\Gamma^{\GG}_{x, I_pK^p}]}(F_\Delta,\Ind^{\la}_{I_p})$, its image via the differential map is $h=(h_\Delta)_{\Delta \in T_n(X_0)} \in S^n_N$ such that
$$h_\Delta(\tilde{\Delta}) = \sum_{i=0}^n (-1)^i f_{\partial_i \Delta}(\partial_i \tilde{\Delta})(1).$$
The identification commutes with differentials since if $\tilde{\Delta}=(\Delta_r) \in \hat{T}_{n/\Delta}$, then $\partial_i \tilde{\Delta}=(\partial_i \Delta_r) \in \hat{T}_{n-1/\Delta}$ for any $0 \leq i \leq n$.
\end{proof}

\begin{rem}\label{rem comp to overconvergent}
Using finiteness of direct sum, $C^\bullet_c$ computes compactly supported cohomology $H^\ast(X_{\GG,K}, \underline{\Ind^\la_{I_p}})$ of $X_{\GG,K}$ for $K = I_p K^p$ with coefficient local system arising from the representation $\Ind^{\la}_{I_p}$.
\end{rem}

We now illustrate how to relate the overconvergent cohomology in our sense to the usual cohomology with finite torsion coefficients.
We fix a countable basis of open normal subgroups of $I_p$
$$I_p=I_p^0 \supset I_p^1 \supset \ldots \supset I_p^r \supset \ldots.$$
And let $$\Ind^s_{I_p} := \{f \in C^{s,\an}(I_p,E) | f(gn)=f(g), ~ \mathrm{for} ~ \forall ~ n \in N_p\},$$ where $C^{s,\an}(I_p,E)$ consists of locally analytic functions on $I_p$ which restrict to analytic functions on $I_p^s$-cosets.
We have $\Ind^\la_{I_p} \simeq \varinjlim_{s} \Ind^s_{I_p}$, identifying $\Ind^s_{I_p}$ with $I_p^s$-analytic vectors of the locally analytic representation $\Ind^\la_{I_p}$.
If we want to prove vanishing of cohomology with coefficient $\Ind^\la_{I_p}$: we may reduce to prove vanishing of cohomology with coefficient $\Ind^s_{I_p}$, which has an integral structure as a representation of $I_p$.

\begin{lem}\label{loc m van}
Let $G$ be a group, $V$ be a $\QQ_p$-linear unitary Banach representation of $G$. Let $V^\circ$ be the unit ball in $V$. An algebra $T$ acts $\QQ_p$-linearly on $R\Gamma_G(V)$ in the derived category of $\QQ_p$ vector spaces. Let $\frak{m}$ be a prime ideal of $T$. Assume
\begin{itemize}
\item $V^\circ/p\cdot V^\circ \simeq \bigoplus\limits_{\ZZ} \FF_p$ ($\ZZ$ copies of $\FF_p$ as a direct sum) as $\FF_p$ vector space.
\item There exists a normal subgroup $N \trianglelefteq G$, such that $T$ acts compatibly on $R\Gamma_G(\FF_p)_{\frak{m}} \xrightarrow{\mathrm{Res}} R\Gamma_N(\FF_p)_{\frak{m}} \curvearrowleft G/N$ ($T$ acts on $R\Gamma_N(\FF_p)_{\frak{m}}$, $T$ action commutes with the $G/N$ action on $R\Gamma_N(\FF_p)_{\frak{m}}$, $\mathrm{Res}$ is $T$-equivariant) and $R^i\Gamma_N(\FF_p)_{\frak{m}}=0$ for $\forall i<d$. The $N$ action on $V^\circ/p\cdot V^\circ$ is trivial.
\end{itemize} 
Then we have $R^i\Gamma_G(V)_{\frak{m}}=0$ for $\forall i<d$. 

Furthermore, we assume additionally that there exists a connected manifold $M$ of dimension $N$ with homotopy type $K(G,1)$. We use $\underline{V}$ to denote the local system associated to $V$ on $M$. If $i > N-d$, we have $R^i\Gamma_c(M, \underline{V})_{\frak{m}}=0$.
\end{lem}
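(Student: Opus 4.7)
\emph{Strategy.} The plan is to reduce the statement for the $\QQ_p$-Banach representation $V$ to one for the discrete $\FF_p$-module $V^\circ/pV^\circ \simeq \prod_{\ZZ} \FF_p$, and then to exploit the hypothesized vanishing of $R\Gamma_N(\FF_p)_\m$ in low degrees via the Hochschild--Serre spectral sequence for $N \trianglelefteq G$.

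First I would reduce $V$ to $V^\circ/pV^\circ$. Since $V = V^\circ[1/p]$ and localization at $\m$ is exact, it suffices to show that $R^i\Gamma_G(V^\circ)_\m$ is $p$-power torsion for $i<d$. Using $p$-adic completeness $V^\circ \simeq \varprojlim_n V^\circ/p^n V^\circ$ and the Milnor exact sequence
\[
0 \to R^1\varprojlim_n R^{i-1}\Gamma_G(V^\circ/p^nV^\circ)_\m \to R^i\Gamma_G(V^\circ)_\m \to \varprojlim_n R^i\Gamma_G(V^\circ/p^nV^\circ)_\m \to 0,
\]
the task reduces to showing $R^i\Gamma_G(V^\circ/p^nV^\circ)_\m = 0$ for $i<d$ and every $n \geq 1$. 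An induction on $n$ using the long exact sequence attached to $0 \to V^\circ/pV^\circ \to V^\circ/p^{n+1}V^\circ \to V^\circ/p^nV^\circ \to 0$ reduces further to the case $n=1$.

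For $V^\circ/pV^\circ \simeq \prod_{\ZZ} \FF_p$ with trivial $N$-action, I would run the Hochschild--Serre spectral sequence
\[
E_2^{i,j} = H^i\bigl(G/N, H^j(N, V^\circ/pV^\circ)\bigr) \Rightarrow H^{i+j}(G, V^\circ/pV^\circ),
\]
which is $T$-equivariant by the compatibility assumption. Since $\Ext^j_{\FF_p[N]}(\FF_p, -)$ commutes with arbitrary direct products in its second argument, $H^j(N, V^\circ/pV^\circ) \simeq \prod_{\ZZ} H^j(N, \FF_p)$, and granted enough finiteness of the $T$-module structure on $H^j(N, \FF_p)$ (so that localization at $\m$ commutes with the product), this vanishes after localization at $\m$ for $j<d$. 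The spectral sequence then yields $R^i\Gamma_G(V^\circ/pV^\circ)_\m = 0$ for $i<d$, completing the first claim. For the compactly supported case, I would invoke Poincar\'e--Lefschetz duality on the oriented $n$-manifold $M$ to identify $R\Gamma_c(M, \underline V)$ with a shifted dual of $R\Gamma_G(V^\vee)$ (up to an orientation twist); the vanishing of $R^j\Gamma_G(V^\vee)_\m$ for $j<d$ coming from the first part then translates directly into $R^i\Gamma_c(M, \underline V)_\m = 0$ for $i>n-d$.

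The main obstacle is the interplay between $\m$-localization and the infinite product $V^\circ/pV^\circ \simeq \prod_{\ZZ} \FF_p$: a priori, localizing the product at $\m$ is larger than the product of localizations, so the vanishing hypothesis on each factor does not automatically propagate. This is resolved by invoking a finiteness property on the $T$-module $H^j(N, \FF_p)$ (e.g.\ finite generation over $T_\m$ inside a Noetherian local Hecke algebra, which is the standard setting in the Scholze--Caraiani-Scholze framework). A secondary subtlety in the compactly supported case is to set up the duality cleanly with the Banach coefficient system; alternatively, one may run the mod-$p$ reduction directly on a chain complex computing $R\Gamma_c$, for instance from a Borel--Serre-type compactification, and then apply Hochschild--Serre on that complex.
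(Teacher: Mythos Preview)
Your overall architecture matches the paper's proof: reduce from $V$ to $V^\circ$ to $V^\circ/p^nV^\circ$ to $V^\circ/pV^\circ$ (via $p$-adic completeness, Mittag--Leffler, and d\'evissage), then use the Hochschild--Serre spectral sequence for $N \trianglelefteq G$. The difference lies in how the infinite product $V^\circ/pV^\circ \simeq \prod_\ZZ \FF_p$ is handled, and here the paper's trick is cleaner than yours.

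You argue that $H^j(N,\prod_\ZZ \FF_p) \simeq \prod_\ZZ H^j(N,\FF_p)$ and then confront the genuine problem that localization at $\m$ need not commute with an infinite product; you resolve this by \emph{assuming} a finiteness property of $H^j(N,\FF_p)$ as a $T$-module. (Note also that $\Ext^j_{\FF_p[N]}(\FF_p,-)$ only commutes with products when $N$ is of type $FP_\infty$, which is true in the intended application but is a further hidden hypothesis.) The paper instead observes that, since $N$ acts trivially on $V^\circ/pV^\circ$, the module $\prod_\ZZ \FF_p$ is simply a large $\FF_p$-vector space with trivial action and hence a \emph{filtered colimit} of finite-dimensional trivial modules $\FF_p^n$. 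Group cohomology and localization both commute with filtered colimits, so $R^i\Gamma_N(\prod_\ZZ \FF_p)_\m = \varinjlim R^i\Gamma_N(\FF_p)^n_\m = 0$ for $i<d$ with no extra hypotheses. This single observation dissolves the ``main obstacle'' you identified.

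For the compactly supported statement, your first suggestion (Poincar\'e--Lefschetz duality directly with the Banach coefficient $\underline{V}$) is workable but heavier than necessary. The paper follows your second suggestion almost exactly: it applies Poincar\'e duality on $M$ with $\FF_p$ coefficients to get $R^i\Gamma_c(M,\FF_p)_\m = 0$ for $i>n-d$, then uses the same colimit trick to pass to $\prod_\ZZ \FF_p$, then d\'evissage to $V^\circ/p^nV^\circ$, and finally Mittag--Leffler (now for the left-exact functor $\Gamma_c$) to reach $V^\circ$ and $V$.
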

\begin{proof}
For $\forall i<d$, $R^i\Gamma_N(V^\circ / p \cdot V^\circ)_{\frak{m}}=0$ since $N$ acts trivially on $V^\circ / p \cdot V^\circ \simeq \bigoplus\limits_{\ZZ} \FF_p$ and direct sum commutes with cohomology and localization.  
By our assumptions and the Lyndon-Hochschild-Serre spectral sequence, $R^i\Gamma_G(\FF_p)_{\frak{m}}=0$ hence $R^i\Gamma_G(V^\circ/p^n \cdot V^\circ)_{\frak{m}}=0$ for any $n \in \ZZ_+$ and $\forall i<d$.
The inverse system $\{V^\circ/p^n \cdot V^\circ\}$ satisfies the Mittag-Leffler condition, therefore by \cite[Tag 02N1]{Sta}, $$R^i\Gamma_G(V^\circ)_{\frak{m}}=0, ~~ R^i\Gamma_G(V)_{\frak{m}}=0 ~ \mathrm{for} ~ \forall i<d.$$ 
By Poincar\'e duality, $R^i\Gamma_c(M, \FF_p)_{\frak{m}}=0$ for $\forall i > N-d$ hence $$R^i\Gamma_c(M, \bigoplus\limits_{\ZZ} \FF_p)_{\frak{m}}=0, ~ R^i\Gamma_c(M, V^\circ/p^n \cdot V^\circ)_{\frak{m}}=0 ~ \mathrm{for} ~ \forall i > N-d.$$
Applying the same Mittag-Leffler condition for the left exact functor $\Gamma_c$ yields the result $R^i\Gamma_c(M, \underline{V})_{\frak{m}}=0$ for $\forall i > N-d$.
\end{proof}

\begin{cor}\label{loc m van'}
Suppose $G = \Gamma^\GG_{x,I_pK^p}$, $M = \Gamma^\GG_{x,I_pK^p} \backslash X^\GG$ , $V = \Ind^s_{I_p}$ as a Banach representation of $I_p$, $T$ acts on $H^\ast_c(M, \underline{V})$ satisfying the conditions of Lem \ref{loc m van}.
The compactly supported cohomology $H^\ast_c(M, V)_\frakm$ of $M$ localized at $\frakm$ vanish above the degree $N-d$.
\end{cor}
\begin{proof}
The congruence subgroup $\Gamma^\GG_{x,I^n_pK^p} < G$ acts trivially on the reduction of $V^\circ$ if $n \gg s$.
\end{proof}

\section{Construction of eigenvarieties}\label{EV construction}
\begin{df}\label{EV data}
An eigenvariety data is a tuple $D = (T, M, A, \psi)$, where $T$ is a topologically finitely generated abelian locally $\QQ_p$-analytic group (\cite[Prop 6.4.1]{Eme17}), $M$ is an essentially admissible representation of $T$, $A$ is a commutative $\QQ_p$-algebra, and $\psi$ is a $\QQ_p$-algebra homomorphism $\psi : A \to \End_T(M)$.
\end{df}

In practice $T$ will be a maximal $p$-adic torus of our interested reductive group, $M$ will be a Jacquet module introduced in \S \ref{LR}, $A$ will be a Hecke algebra.
For the strong dual $M'_b$ of $M$, we define the $T$-action \[ t \cdot \lambda (m) := \lambda(t \cdot m) ~ \for ~ \forall \lambda \in M'_b, ~ \forall t \in T \] on $M'_b$ (it is direct to check that $t_1 t_2 \cdot \lambda = t_1 (t_2 \cdot \lambda)$ as $T$ is abelian).

\begin{prop}\label{EV machine}
Let $\widehat{T}$ be the character variety of $T$ parameterizing continuous characters of $T$.
Given an eigenvariety data $D = (T, M, A, \psi)$, there is a rigid analytic space $\mathscr{E} = \mathscr{E}(D)$ together with a locally finite morphism $\mathscr{E} \to \widehat{T}$, an algebra homomorphism $\psi_{\mathscr{E}} : A \to \sO(\mathscr{E})$.

A closed point $x \in \mathscr{E}(\Qpbar)$ gives rise to homomorphisms $\lambda : A \to \Qpbar$ and $\delta : T \to \Qpbar^\times$. And there is a natural bijection of sets (the natural map from left to right is constructed in the proof) $$\mathscr{E}(\Qpbar) \leftrightarrow \left\{\begin{array}{c}
(\lambda, \delta) \in \Hom(A, \Qpbar)  
\times \widehat{T}(\Qpbar) \text { such that } \\
\operatorname{Hom}_{T}(\mathds{1}(\delta), M[\lambda]) \neq 0
\end{array}\right\},$$
where $\mathds{1}(\delta)$ is the one-dimensional $\Qpbar$ representation of $T$ corresponding to $\delta$ and $M[\lambda] := M[\ker(\lambda)] \otimes_{A,\lambda} \Qpbar$ for the maximal ideal $\ker(\lambda)$.
\end{prop}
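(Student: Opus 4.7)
The plan is to extract from $M$ a coherent sheaf on $\widehat{T}$ carrying a commuting $A$-action, and then form its relative rigid-analytic spectrum. First, since $M$ is essentially admissible, its strong dual $M'_b$ is by definition coadmissible over the Fr\'echet-Stein algebra $C^{\an}(\widehat{T}, \QQ_p)$ (cf.\ Lem \ref{sub ea} and \cite[\S 6.4]{Eme17}); the theory of coherent sheaves on quasi-Stein spaces produces a coherent sheaf $\mathscr{M}$ on $\widehat{T}$ with $\Gamma(\widehat{T}, \mathscr{M}) = M'_b$, and on any affinoid $U = \mathrm{Sp}(R) \subset \widehat{T}$ the sections $\mathscr{M}(U) = R \hat{\otimes}_{C^{\an}(\widehat{T},\QQ_p)} M'_b$ form a finitely generated $R$-module.

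Next, the $T$-equivariant action $\psi : A \to \End_T(M)$ transposes to a commuting $A$-action on $M'_b$ by continuous endomorphisms. Because this action commutes with the $T$-action and the $\sO_{\widehat{T}}$-structure on $M'_b$ is the one induced by $T$ through Lem \ref{la ext}, we obtain an algebra homomorphism $\tilde{\psi} : A \to \End_{\sO_{\widehat{T}}}(\mathscr{M})$. On any affinoid $U \subset \widehat{T}$, $\End_{\sO_{\widehat{T}}(U)}(\mathscr{M}(U))$ is a finite $\sO_{\widehat{T}}(U)$-module, so the image $\mathscr{A}(U)$ of $A$ is a finite commutative $\sO_{\widehat{T}}(U)$-subalgebra (commutative since $A$ is). These glue to a coherent sheaf $\mathscr{A}$ of commutative $\sO_{\widehat{T}}$-algebras. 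Define $\mathscr{E} = \mathscr{E}(D)$ by gluing the affinoid spectra $\mathrm{Sp}(\mathscr{A}(U))$; the structure map $\mathscr{E} \to \widehat{T}$ is locally finite by construction, and the tautological composite $A \to \mathscr{A} \to \Gamma(\mathscr{E}, \sO_{\mathscr{E}})$ furnishes $\psi_{\mathscr{E}}$.

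For the bijection on closed points, a point $x \in \mathscr{E}(\Qpbar)$ over $\delta \in \widehat{T}(\Qpbar)$ is the same datum as a $\Qpbar$-algebra character of the fiber algebra $\mathscr{A}_\delta = \mathscr{A} \otimes_{\sO_{\widehat{T}},\delta} \Qpbar$, equivalently a homomorphism $\lambda : A \to \Qpbar$ factoring through $\mathscr{A}_\delta \hookrightarrow \End_{\Qpbar}(\mathscr{M}_\delta)$. Such a $\lambda$ appears precisely when $\mathscr{M}_\delta$ contains a joint eigenvector for $A$ with eigencharacter $\lambda$. By biduality for essentially admissible representations (\cite[Prop 6.4.11]{Eme17}), the fiber $\mathscr{M}_\delta$ is naturally identified with the $\Qpbar$-linear dual of the $\delta$-isotypic space $\Hom_T(\mathds{1}(\delta), M)$, and this identification is $A$-equivariant. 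Hence a point above $(\delta, \lambda)$ exists if and only if $\Hom_T(\mathds{1}(\delta), M[\lambda]) \neq 0$, yielding the claimed bijection. The natural map from left to right sends $x$ to the pair $(\lambda, \delta)$ just described.

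The main obstacle is the fiber identification in the last step: one must verify carefully that the scheme-theoretic fiber $\mathscr{M}_\delta$ of the coherent sheaf recovers, dually, the representation-theoretic $\delta$-isotypic component of $M$, and that the $A$-action on this fiber agrees with the $A$-action on $\Hom_T(\mathds{1}(\delta), M)$. This rests on the equivalence of categories between essentially admissible locally analytic representations of $T$ and coherent sheaves on $\widehat{T}$, together with compatibility of the $A$-action under this equivalence; once in hand, the rest of the construction is a routine exercise in rigid-analytic relative spec.
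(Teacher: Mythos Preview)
Your construction of $\mathscr{E}$ is exactly the paper's: pass from $M$ to the coherent sheaf $M'_b$ on $\widehat{T}$, take the $\sO_{\widehat{T}}$-algebra image of $A$ in its endomorphisms, and form the relative $\mathrm{Sp}$. For the bijection on $\Qpbar$-points the two arguments diverge slightly. You take the fiber at $\delta$ first, identify $\mathscr{M}_\delta$ with the dual of $\Hom_T(\mathds{1}(\delta), M)$, and then look for $\lambda$-eigenvectors; the paper instead proves the duality $M[I]'_b \simeq M'_b/I M'_b$ for ideals $I\subset A$ (by a short double-dual argument using the antiequivalence between essentially admissible $T$-representations and coherent sheaves on $\widehat{T}$), applies it with $I=\ker\lambda$, and only then passes to the fiber at $\delta$. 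Both orders work because the operations commute on finite-dimensional fibers, so your route is fine.

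One small inaccuracy: the map $\mathscr{A}_\delta \to \End_{\Qpbar}(\mathscr{M}_\delta)$ need not be injective (taking fibers does not preserve monomorphisms of sheaves), so your hooked arrow is unjustified. This does not break the argument, though: since $\mathscr{A}$ is by construction a subsheaf of $\End_{\sO_{\widehat{T}}}(\mathscr{M})$, the coherent $\mathscr{A}$-module $\mathscr{M}$ has full support on $\mathscr{E}$, whence every closed point $x$ satisfies $\mathscr{M}_\delta/\mathfrak{m}_x\mathscr{M}_\delta\neq 0$ by Nakayama. That is the correct replacement for your injectivity claim, and it is precisely how the paper phrases it (``$M'_b$ is a coherent sheaf of faithful $\cA$-modules'').
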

\begin{proof}
By \cite[Prop 2.3.2]{Eme06I}, $M'_b$ can be viewed as a rigid analytic coherent sheaf on $\widehat{T}$, together with the map $$\psi : A \to \End_{\sO(\widehat{T})}(M'_b).$$
Let $\cA$ be the sheaf of $\sO(\widehat{T})$-algebra generated by $\psi$.
We construct $\mathscr{E}$ to be the relative $\mathrm{Sp}$ of $\cA$, together with $\psi_{\mathscr{E}} : A \to \sO(\mathscr{E})$.
Local finiteness of $\mathscr{E} \to \widehat{T}$ follows from the fact that the homomorphism group between two finite modules is still finite.

Given a closed point $x \in \mathscr{E}(\Qpbar)$, let $\lambda_x := x \circ \psi_{\mathscr{E}}$, and $\delta_x : T \to \Qpbar^\times$ is given by the image of $\mathscr{E}(\Qpbar) \to \widehat{T}(\Qpbar)$.
Let $\mathcal{P}_x$ be the kernel of $\lambda_x$.
For $\delta \in \widehat{T}(\Qpbar)$, $M'_b |_{\delta}$, the fiber of $M'_b$ at $\delta$, is finite dimensional. 
By construction of $\mathscr{E}$, $x = (\lambda_x, \delta_x) \in \mathscr{E}(\Qpbar)$ implies $(M'_b |_{\delta_x}) / \mathcal{P}_x (M'_b |_{\delta_x}) \neq 0$ ($M'_b$ is a coherent sheaf of faithful $\cA$-modules).
Reversely, given $\lambda : A \to \Qpbar$ with kernel $\cP$ and $\delta \in \widehat{T}(\Qpbar)$, a maximal ideal $\frak{m}_x \subset A \otimes \sO(\widehat{T})$ is determined.
\[ (M'_b |_{\delta}) / \mathcal{P} (M'_b |_{\delta}) = M'_b / \frak{m}_x \cdot M'_b \neq 0 \] implies $\frak{m}_x$ lies above the annihilator of $M'_b$, meaning a corresponding point $x$ of $\frak{m}_x$ is on the eigenvariety $\mathscr{E}(\Qpbar)$.

By \cite[\S 6.4]{Eme17} and \cite[Prop 2.3.2]{Eme06I}, there is a antiequivalence of categories between the category of coherent rigid analytic sheaves on $\widehat{T}$, and the category of essentially admissible representations of $T$.
For an ideal $I \subset A$, we claim $M[I]'_b \simeq M'_b / I M'_b$.
Taking the dual of the monomorphism $M[I] \hookrightarrow M$, we have the natural map $M'_b / I M'_b \twoheadrightarrow M[I]'_b$ of $M'_b$ factoring through $M'_b / I M'_b$.
Similarly the dual of $M'_b / I M'_b$ admits a monomorphism to $M[I]$, yielding the reverse natural map $M[I]'_b \twoheadrightarrow M'_b / I M'_b$ by taking a double dual.
The condition that $(M'_b |_{\delta}) / \mathcal{P} (M'_b |_{\delta}) \neq 0$ is equivalent to the condition $\Hom_T(\mathds{1}(\delta), M[\lambda]) \neq 0$.
\end{proof}

We also illustrate how to construct morphisms out of ``sub'' or ``quotient'' eigenvariety data as follows.
\begin{prop}\label{EV func}
Given two eigenvariety data $D = (T, M, A, \psi)$ and $D' = (T, M', A', \psi')$ such that $M'$ is sub or quotient representation of $M$ and there is a $\QQ_p$-algebra homomorphism $f : A \to A'$ making $\psi$ and $\psi'$ compatible: the induced $A$ action via $\psi$ stabilizes $M'$ and it coincides with the action of $\psi' \circ f$.
Then there is a natural map of rigid spaces from $\mathscr{E}(D')$ to $\mathscr{E}(D)$.
If $f$ is surjective, then the natural map $\mathscr{E}(D') \to \mathscr{E}(D)$ is a closed embedding.
\end{prop}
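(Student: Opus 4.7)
The plan is to dualise and then to transport the sub/quotient relationship between $M$ and $M'$ to the algebras $\cA, \cA'$ whose relative $\mathrm{Sp}$ over $\widehat{T}$ cut out the two eigenvarieties, as in the construction of Prop \ref{EV machine}. By \cite[Prop 2.3.2]{Eme06I}, applying $(-)'_b$ to $M' \hookrightarrow M$ (in the sub case) yields a surjection $M'_b \twoheadrightarrow (M')'_b$ of coherent sheaves on $\widehat{T}$, while applying it to $M \twoheadrightarrow M'$ (in the quotient case) yields an inclusion $(M')'_b \hookrightarrow M'_b$. Since $\psi(a)$ preserves $M'$ as an element of $\End_T(M)$, its transpose on $M'_b$ either descends to $(M')'_b$ along the quotient map (sub case) or restricts to the subsheaf $(M')'_b$ (quotient case). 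Either way we obtain a natural $\sO(\widehat{T})$-algebra homomorphism $\cA \to \End_{\sO(\widehat{T})}\bigl((M')'_b\bigr)$; call its image $\cA''$.

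The compatibility hypothesis $\psi(a)|_{M'} = \psi'(f(a))$ translates, via the same duality, into the equality of the induced operator on $(M')'_b$ with $\psi'(f(a))$. Consequently $\cA''$ is precisely the $\sO(\widehat{T})$-subalgebra of $\cA'$ generated by $\psi'(f(A))$. Composing the surjection $\cA \twoheadrightarrow \cA''$ with the inclusion $\cA'' \hookrightarrow \cA'$ furnishes the required $\sO(\widehat{T})$-algebra map $\cA \to \cA'$, and relative $\mathrm{Sp}$ converts this to a rigid-analytic morphism $\mathscr{E}(D') \to \mathscr{E}(D)$ over $\widehat{T}$. If moreover $f$ is surjective, then $f(A) = A'$ forces $\cA'' = \cA'$, so $\cA \to \cA'$ is surjective; the corresponding map of relative $\mathrm{Sp}$ is a closed immersion, realising $\mathscr{E}(D')$ as a closed subvariety of $\mathscr{E}(D)$.

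The only genuine subtlety to be careful about is not conflating the two cases: the $\cA$-action on $(M')'_b$ arises by \emph{descent} along the surjection $M'_b \twoheadrightarrow (M')'_b$ in the sub case (available because $\psi(a)$ stabilises $M'$), and by \emph{restriction} to the subsheaf $(M')'_b \hookrightarrow M'_b$ in the quotient case (available because $\psi(a)$ stabilises $\ker(M \twoheadrightarrow M')$). Everything else is formal bookkeeping with duals, so I expect no deeper analytic obstruction beyond being explicit about which of the two pictures one is in.
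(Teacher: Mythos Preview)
Your argument is correct and follows essentially the same route as the paper: construct an $\sO(\widehat{T})$-algebra map $\cA \to \cA'$ and apply relative $\mathrm{Sp}$. The paper's proof is extremely terse (three sentences) and does not spell out how the sub/quotient hypothesis on $M'$ is actually used; you have filled in precisely that step by passing to the dual coherent sheaves and explaining separately why the $\cA$-action descends (sub case) or restricts (quotient case) to $(M')'_b$, and why the resulting image $\cA''$ sits inside $\cA'$. Your treatment of the surjective case is identical to the paper's.
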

\begin{proof}
$f$ induces a morphism from the sheaf of $\sO(\widehat{T})$-algebra for $D'$ to the sheaf of $\sO(\widehat{T})$-algebra for $D$.
If $f$ is surjective, then this map of sheaves of $\sO(\widehat{T})$-algebras is surjective.
Taking $\mathrm{Sp}$ of the morphism, we construct a natural map from $\mathscr{E}(D')$ to $\mathscr{E}(D)$.
\end{proof}

In the rest two sections, we adapt all notation and terminologies in \S 2 of \cite{ACC+18} and the previous section.
We define our derived eigenvarieties using results of previous sections as follows.

Let $S$ be a finite set of ``bad" finite places containing $S_p$ such that $K_v$ is hyperspecial for $v \notin S$.
Let $\cA_{K_p}$ be the abelian category of admissible $\cO_E[K_p]$-modules introduced in \cite[\S 1.2]{Eme06I} for the compact locally $\QQ_p$-analytic group $K_p$.
As in \S \ref{comp}, we define complexes $\pi(\GG, K^p, r, m)$, $\pi(\GG, K^p, m)$, $\tilde{\pi}(\GG, K^p, m)$ whose $n$-terms are (these are denoted as $S^\bullet_m$ and $A^\bullet_m$ in \S \ref{comp}) 
\[ \pi^n(\GG, K^p, r, m) := \bigoplus\limits_{\Delta^{\prime} \in T^\circ_{n}(X_r)} \Hom_\ZZ\left(\ZZ\Delta^{\prime}, \cO_E/p^m\cO_E \right), \]
$$\pi^n(\GG, K^p, m) := \varinjlim_{r}  \pi^n(\GG, K^p, r, m) ,$$
 $$\tilde{\pi}^n(\GG, K^p, m) := \varinjlim_{r} \prod\limits_{\Delta^{\prime} \in T_{n}(X_r)} \Hom_\ZZ\left(\ZZ\Delta^{\prime}, \cO_E/p^m\cO_E \right).$$
$\tilde{\pi}(\GG, K^p, m)$ is a well defined object in $\bK^b(\cO_E / p^m[\GG_p])$ and $$\pi^\circ(\GG, K^p) := \varprojlim\limits_{m} \pi(\GG, K^p, m) \in \bK^b(\cA_{K_p}),$$   $$\pi(\GG, K^p) := \pi^\circ(\GG, K^p) \otimes_{\cO_E} E \in \bK^b(\Ban^\ad_{K_p}(E)),$$
\[ \tilde{\pi}^\circ(\GG, K^p) := \varprojlim\limits_{m} \tilde{\pi}(\GG, K^p, m) \in \bK^b(\cO_E[\GG_p]), \]
$$\tilde{\pi}(\GG, K^p) := \tilde{\pi}^\circ(\GG, K^p) \otimes_{\cO_E} E \in \bK^b(\Ban_{\GG_p}(E)).$$

By \cite[Prop. 6.2.4]{Eme17}, the locally $\QQ_p$-analytic vectors $\pi(\GG, K^p)^{\la}$ is a complex of admissible locally $\QQ_p$-analytic representations of $K_p$ and their cohomology groups $\widetilde{H}^\ast(K^p)^{\la}$ are admissible locally $\QQ_p$-analytic representations of $\GG_p$.

We have an $K_p$-equivariant homotopy equivalence $\pi(\GG, K^p, m) \stackrel[i]{p}{\leftrightarrows} \tilde{\pi}(\GG, K^p, m)$, compatible with $m$, as discussed in section \S \ref{comp} (essentially coming from the homotopy equivalence of $\ZZ$-complexes (\ref{Zequi})).
We can take a projective limit (with respect to $m$) of such $K_p$-equivariant homotopy equivalences to get $K_p$-equivariant continuous mapping $i, p$ \[ \pi^\circ(\GG, K^p) \stackrel[i]{p}{\leftrightarrows} \tilde{\pi}^\circ(\GG, K^p), ~ \pi(\GG, K^p) \stackrel[i]{p}{\leftrightarrows} \tilde{\pi}(\GG, K^p). \]
In other words, $i, p$ are morphisms in $\bC\bh^b(\Ban_{K_p}(E))$.

\begin{lem}\label{Kp-equi}
The $K_p$-equivariant continous homotopy equivalence $\pi(\GG, K^p) \stackrel[i]{p}{\leftrightarrows} \tilde{\pi}(\GG, K^p)$ defines a $\GG_p$-extension $(\tilde{\pi}(\GG, K^p), i, p)$ of $\pi(\GG, K^p)$ in the sense of Def \ref{Gaction}.
\end{lem}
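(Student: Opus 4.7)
The plan is to unwind Definition \ref{Gaction} and check each ingredient is already supplied either by the hypothesis of the lemma or by the earlier construction of $\tilde{\pi}(\GG,K^p)$. A $\GG_p$-extension of $\pi(\GG,K^p)$ amounts to (a) the datum that $\tilde{\pi}(\GG,K^p)$ is an object of $\bK^b(\Ban_{\GG_p}(E))$, and (b) a $K_p$-equivariant homotopy equivalence $\pi(\GG,K^p)\stackrel[i]{p}{\leftrightarrows}\tilde{\pi}(\GG,K^p)$ living in $\bK^b(\Ban_{K_p}(E))$. Condition (a) is exactly the conclusion of Lem \ref{large Banach}, applied to $\tilde{\pi}^\circ(\GG,K^p)=\varprojlim_m\tilde{\pi}(\GG,K^p,m)$: each $\tilde{\pi}^n(\GG,K^p)$ is $p$-adically separated, complete and torsion-free with continuous $\GG_p$-action, so inverting $p$ produces a complex of $E$-Banach representations of $\GG_p$.

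For (b), the hypothesis of the lemma gives a $K_p$-equivariant homotopy equivalence; what remains is to verify that $i$, $p$, and the homotopies are morphisms of Banach spaces, i.e.\ continuous for the $p$-adic Banach topology. This is built into the construction: the compatible homotopy equivalences
\[
\bigoplus_{\Delta'\in T^\circ_n(X_r)}\ZZ\Delta' \;\leftrightarrows\; \bigoplus_{\Delta'\in T_n(X_r)}\ZZ\Delta'
\]
of free abelian groups induce, after applying $\Hom_\ZZ(-,\cO_E/p^m\cO_E)$, $K_p$-equivariant $\cO_E/p^m$-linear chain maps $\pi(\GG,K^p,r,m)\leftrightarrows\tilde{\pi}(\GG,K^p,r,m)$, automatically continuous for the discrete topology. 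Passing to the direct limit in $r$ and then the inverse limit in $m$ yields continuous $\cO_E$-linear chain maps $\pi^\circ(\GG,K^p)\leftrightarrows\tilde{\pi}^\circ(\GG,K^p)$ together with continuous homotopy operators, since the projective limit topology on each term coincides with the $p$-adic topology by Lem \ref{large Banach}. Inverting $p$ preserves continuity, producing morphisms in $\bK^b(\Ban_{K_p}(E))$ as required.

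Thus both conditions in Definition \ref{Gaction} are met, which is exactly the content of the lemma. There is no serious obstacle here; the statement is essentially a bookkeeping verification that the topological enhancement coming from the $p$-adic completion of the finite-level homotopy equivalence is compatible with the Banach representation structure. The only point meriting care is the identification of the topology on $\pi^\circ(\GG,K^p)$ and $\tilde{\pi}^\circ(\GG,K^p)$ as the $p$-adic topology of the unit ball of a Banach space, which is supplied by Lem \ref{large Banach} and the definition of $\pi(\GG,K^p)$ as an object of $\bK^b(\Ban^{\ad}_{K_p}(E))$.
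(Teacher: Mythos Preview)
Your proof is correct and follows essentially the same approach as the paper: both verify the two conditions in Definition~\ref{Gaction} by invoking Lem~\ref{large Banach} for the Banach $\GG_p$-representation structure on $\tilde{\pi}(\GG,K^p)$ and by appealing to the finite-level construction (discrete topology on $\pi(\GG,K^p,m)$, then passing to the limit) for continuity. Your treatment is in fact more explicit than the paper's, which phrases the continuity check in terms of the composite $\tilde{g}=p\circ g\circ i$ rather than directly for $i$, $p$, and the homotopies; but the underlying argument is the same.
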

\begin{proof}
We have that $\tilde{\pi}(\GG, K^p)$ is a Banach representation of $\GG_p$ by Lem \ref{large Banach}.
\end{proof}

In the following we illustrate how to decompose $\pi^\circ(\GG, K^p)$ and $\pi(\GG, K^p)$ in terms of maximal ideals of derived Hecke algebra considered in \cite{GN16}.

For a compact open subgroup $K_p^r \subset K_p$, we form the following complexes with natural transitions in $r$ and $m$ 
\[ C(\GG, K^p, r, m) := \bigoplus\limits_{\Delta \in T^\circ_{\bullet}(X_0)} \bigoplus\limits_{\Delta^{\prime} \in T^\circ_{\bullet}(X_r)_{/\Delta}} (\cO_E/p^m\cO_E) \cdot \Delta^{\prime} \in \bK^b(\cO_E [[K_p]]), \]
\[ \widetilde{C}(\GG, K^p, r, m) := \bigoplus\limits_{\Delta^{\prime} \in T_{\bullet}(X_r)} (\cO_E/p^m\cO_E) \cdot \Delta^{\prime} \in \bK^b(\cO_E [[K_p]]). \]
The Iwasawa algebra $\cO_E [[K_p]]$-action on these complexes factors through the $\cO_E / p^m [K_p / K_p^r]$-action.
As discussed in \S \ref{comp}, there is a choice of compatible homotopy equivalences in $r$ and $m$
\[ C(\GG, K^p, r, m) \leftrightarrows \widetilde{C}(\GG, K^p, r, m). \]
We will sometimes regard $C(\GG, K^p, r, m)$ and $\widetilde{C}(\GG, K^p, r, m)$ as complexes or as the same object in $\bD(\cO_E[[K_p]])$ or $\bD(\cO_E / p^m [K_p / K_p^r])$.

For each $r$ and $m$, we may consider the derived Hecke algebra $\bT^S_\GG(K^p, r, m)$ as the image of $\cH(\GG(\AA^{\infty, S}_F), K^S) \otimes_{\ZZ} \cO_E \to \End_{\bD(\cO_E/p^m\cO_E[K_p / K_p^r])} (\widetilde{C}(\GG, K^p, r, m))$ (\cite[Def 2.1.11]{GN16}).
The (big) derived Hecke algebra \[ \bT^{S,\circ}_\GG := \varprojlim_{r,m} \bT^S_\GG(K^p, r, m) \] is semilocal with the decomposition \[ \bT^{S,\circ}_\GG = \bigoplus_{\frakm \in \mathrm{MaxSpec}(\bT^{S,\circ}_\GG)} \bT^{S,\circ}_{\GG, \frakm} \] by \cite[Lem 2.1.14]{GN16} (we note that although \cite{GN16} is set up for projective general linear groups, both of the proof of \cite[Lem 2.1.14]{GN16} and \cite[Lem 2.5]{KT17} hold true in general).

As complexes, let \[ C(\GG, K^p) := \varprojlim_m \varprojlim_r C(\GG, K^p, r, m), ~ \widetilde{C}(\GG, K^p) := \varprojlim_m \varprojlim_r \widetilde{C}(\GG, K^p, r, m), \]
\[ \mathrm{with} ~ C(\GG, K^p) \leftrightarrows \widetilde{C}(\GG, K^p). \]
It is key to notice that $C(\GG, K^p)$ is a perfect object of $\bD(\cO_E[[K_p]])$, each term of the perfect complex (representing) $C(\GG, K^p)$ is a finite free $\cO_E[[K_p]]$-module. 
We will sometimes regard $C(\GG, K^p)$ as a perfect complex or as an object of $\bD(\cO_E[[K_p]])$.

For each pair of $r$ and $m$, we have a decomposition (we identify $C(\GG, K^p, r, m)$ and $\widetilde{C}(\GG, K^p, r, m)$ in the derived category, and $C(\GG, K^p, r, m)_\frakm$ is defined to be an object in the derived category) \[ C(\GG, K^p, r, m) = \bigoplus_{\frakm \in \mathrm{MaxSpec}(\bT^{S,\circ}_\GG)} C(\GG, K^p, r, m)_\frakm \in \bD(\cO_E[[K_p]]), \] compatible in terms of $r$ and $m$.

By \cite[Lem 091D]{Sta}, we have \[ C(\GG, K^p) \simeq R \varprojlim_m \varprojlim_r C(\GG, K^p, r, m) \in \bD(\cO_E[[K_p]]). \]

The space of $\cO_E$-valued continuous functions on $K_p$ is dual to $\cO_E[[K_p]]$, we have the following (compatible) identifications as complexes of objects in $\cA_{K_p}$
\[ \pi(\GG, K^p, r, m) \simeq \Hom_{\cO_E} ( C(\GG, K^p, r, m), \cO_E / p^m \cO_E), \]
\[ \pi^\circ(\GG, K^p) \simeq \Hom_{\cO_E}^{\mathrm{cont}} ( C(\GG, K^p), \cO_E) \in \bC\bh(\cA_{K_p}). \]

As derived limits exist in $\bD(\cO_E[[K_p]])$ by \cite[Tag 0BK7]{Sta}, for each maximal ideal $\frakm$, \[ C(\GG, K^p)_\frakm := R \varprojlim_m \varprojlim_r C(\GG, K^p, r, m)_\frakm \in \bD(\cO_E[[K_p]]), ~ \mathrm{and} \] 
\begin{eqnarray}\label{Hecke decom} C(\GG, K^p) \simeq \bigoplus_{\frakm \in \mathrm{MaxSpec}(\bT^{S, \circ}_\GG)} C(\GG, K^p)_\frakm \in \bD(\cO_E[[K_p]]) \end{eqnarray}
with respect to the idempotent decomposition of the semilocal Hecke algebra $\bT^{S,\circ}_\GG$ as derived limits commute with finite direct sum. 
By \cite[Tag 08GA]{Sta}, $C(\GG, K^p)_\frakm$ is a perfect object in $\bD(\cO_E[[K_p]])$. And we let $C(\GG, K^p)_\frakm$ be represented by a perfect complex.
We will sometimes regard $C(\GG, K^p)_\frakm$ as a perfect complex or as an object of $\bD(\cO_E[[K_p]])$.

Choose a perfect complex $C(\GG, K^p)_\frakm$, we have the following definition of  \[ \pi^\circ(\GG, K^p)_\frakm := \Hom_{\cO_E}^{\mathrm{cont}} ( C(\GG, K^p)_\frakm, \cO_E) \in \bC\bh(\cA_{K_p}) \]
as a complex of admissible $\cO_E[K_p]$-modules with only finitely many nonzero terms. 
If we choose different perfect complex for $C(\GG, K^p)_\frakm$, the resulting $\pi^\circ(\GG, K^p)_\frakm$ would only differ by a homotopy equivalence by \cite[Lem 064B]{Sta}.
We may pass $\pi^\circ(\GG, K^p)_\frakm$ to $\bD(\cA_{K_p})$.

By dualizing the spherical Hecke action on $C(\GG, K^p, r, m)$, $C(\GG, K^p)$ and $C(\GG, K^p)_\frakm$, we have the Hecke actions
\begin{eqnarray} \bT^{S,\circ}_\GG \to \End_{\bD^b(\cO_E / p^m[K_p])}(\pi(\GG, K^p, m)),\end{eqnarray}
\begin{eqnarray}\label{derived spherical Hecke} \bT^{S,\circ}_\GG \to \End_{\bD(\cA_{K_p})}(\pi^\circ(\GG, K^p)), \end{eqnarray}
\begin{eqnarray}\label{rational derived spherical Hecke} \bT^{S,\circ}_\GG \otimes_{\cO_E} E \to \End_{\bD(\Ban^\ad_{K_p}(E))}(\pi(\GG, K^p))\end{eqnarray}
in various bounded derived categories of abelian categories of representations of $K_p$.
In particular, $\pi^\circ(\GG, K^p)_\frakm \in \bD(\cA_{K_p})$ is equipped with a Hecke action of $\bT^{S,\circ}_\GG$ factoring through $\bT^{S,\circ}_{\GG, \frakm}$ such that the decomposition (\ref{max ideals decomp}) is $\bT^{S,\circ}_\GG$-equivariant.
The previous discussions give rise to the following proposition.

\begin{prop}\label{Hecke semiloc}
For each maximal ideal $\frakm \subset \bT^{S,\circ}_\GG$,
\[ \pi(\GG, K^p)_\frakm := \pi^\circ(\GG, K^p)_\frakm \otimes_{\cO_E} E \] is a complex of injective admissible Banach representation of $K_p$.
Passing to $\bD^b(\Ban^\ad_{K_p}(E))$, it is equipped with the induced Hecke action of $\bT^S_\GG := \bT^{S,\circ}_\GG \otimes_{\cO_E} E$.

There is an isomorphism of objects in the derived category: \begin{eqnarray}\label{max ideals decomp} \pi(\GG, K^p) \simeq \bigoplus_{\frakm} \pi(\GG, K^p)_{\frakm} \in \bD^b(\Ban^\ad_{K_p}(E)), \end{eqnarray}
for $\frakm$ running through maximal spectrum of $\bT^{S,\circ}_\GG$.
\end{prop}

By the compatibility of $C(\GG, K^p, r, m)$, we will exhibit a ``uniform finiteness" for defining a complex representing $C(\GG, K^p, r, m)_\frakm$ (which does not depend on $r$, $m$) as follows.
\begin{lem}\label{uniform finiteness}
We have \[ C(\GG, K^p, r, m)_\frakm \simeq C(\GG, K^p)_\frakm \otimes_{\cO_E[[K_p]]}^{\bL} (\cO_E / p^m \cO_E [K_p / K_p^r]) \in \bD(\cO_E[[K_p]]). \]
\end{lem}
\begin{proof}
As complexes we have
\[ C(\GG, K^p, r, m) \simeq C(\GG, K^p) \otimes_{\cO_E[[K_p]]} (\cO_E / p^m \cO_E [K_p / K_p^r]). \]
We get the desired isomorphism by passing to the derived category and applying the idempotent decomposition (\ref{Hecke decom}).
\end{proof}
Therefore if $C(\GG, K^p)_\frakm$ is a perfect complex, we can use the complex \[ C(\GG, K^p)_\frakm \otimes_{\cO_E[[K_p]]} (\cO_E / p^m \cO_E [K_p / K_p^r])\] to represent $C(\GG, K^p, r, m)_\frakm$, it is perfect with respect to $(\cO_E / p^m \cO_E [K_p / K_p^r])$.
Furthermore, for each term of $C(\GG, K^p)_\frakm \otimes_{\cO_E[[K_p]]} (\cO_E / p^m \cO_E [K_p / K_p^r])$, the rank of such a module does not depend on $r$ and $m$.

We define 
\[ \pi(\GG, K^p, r, m)_\frakm \simeq R\Hom_{\cO_E[[K_p^r]]} ( C(\GG, K^p)_\frakm, \cO_E / p^m \cO_E ) \in \bD(\cO_E[K_p]). \]
\begin{lem}\label{limit loc}
Pick a maximal ideal $\frakm \subset \bT^{S,\circ}_\GG$. For each $r$, $m$, we can choose a complex representing $\pi(\GG, K^p, r, m)_\frakm \in \bC\bh(\cO_E / p^m \cO_E [K_p / K_p^r])$ compatible in both $r$ and $m$ such that $\varprojlim_m \varinjlim_r \pi(\GG, K^p, r, m)_\frakm \in \bC\bh(\Ban^\ad_{K_p}(E))$ represents $\pi^\circ(\GG, K^p)_\frakm$ when passing to $\bD^b(\Ban^\ad_{K_p}(E))$.
\end{lem}
\begin{proof}
Note that as complexes \[ \pi(\GG, K^p, r, m) \simeq \Hom_{\cO_E[[K_p^r]]} ( C(\GG, K^p), \cO_E / p^m \cO_E), \] and $\pi(\GG, K^p, r, m)$ as an object in $\bD(\cO_E[K_p])$ admits a $\bT^S_\GG(K^p, r, m)$ action (compatible in $r$, $m$).

We may represent $\pi(\GG, K^p, r, m)_\frakm$ by $\Hom_{\cO_E[[K_p^r]]} ( C(\GG, K^p)_\frakm, \cO_E / p^m \cO_E )$.
Then as an object of $\bC\bh(\cA_{K_p})$, $\pi^\circ(\GG, K^p)_\frakm$ can be chosen as \begin{eqnarray} \pi^\circ(\GG, K^p)_\frakm & \simeq &\varprojlim_m \Hom_{\cO_E}^{\mathrm{cont}} (C(\GG, K^p)_\frakm, \cO_E / p^m \cO_E ) \\ & \simeq & \varprojlim_m \varinjlim_r \Hom_{\cO_E[[K_p^r]]} ( C(\GG, K^p)_\frakm, \cO_E / p^m \cO_E) \\ & \simeq &  \varprojlim_m \varinjlim_r \pi(\GG, K^p, r, m)_\frakm, \end{eqnarray} regarded as chain complexes.
\end{proof}

Let $\BB_p:=\prod\limits_{v \in S_p} \BB(F_v), ~ \TT_p:=\prod\limits_{v \in S_p} \TT(F_v)$.
Let $\widehat{\TT}_p$ denote the rigid analytic variety over $\QQ_p$ that parametrizes the locally $\QQ_p$-analytic characters of $\TT_p$.
By Prop \ref{GAEA}, $H^\ast(J_{\BB_p}(\pi(\GG, K^p)))$ are essentially admissible representations of $\TT_p$.
Recall $\bT^S_\GG = \bT^{S,\circ}_\GG \otimes_{\cO_E} E$ in Prop \ref{Hecke semiloc}, we have the Hecke actions
$$\psi: \bT^S_\GG \to \End_{\sO(\widehat{\TT}_p)} (H^\ast(J_{\BB_p}(\pi(\GG, K^p)))'_b),$$ 
$$\psi_n: \bT^S_\GG \to \End_{\sO(\widehat{\TT}_p)} (H^n(J_{\BB_p}(\pi(\GG, K^p)))'_b).$$

By Prop \ref{Hecke semiloc}, $\pi(\GG, K^p)_{\frakm}$ is a complex of injective objects for each $\frakm$.
The complex $\left(\pi(\GG, K^p)^{\la}\right)^{N_p}$ is homotopic equivalent to $\bigoplus_{\frakm} \left(\pi(\GG, K^p)_{\frakm}^{\la}\right)^{N_p}$ (as $N_p$ in \S \ref{comp}).

The cohomology $H^\ast\left((\pi(\GG, K^p)_{\frakm}^{\la})^{N_p}\right)$ decomposes as
\[ H^\ast\left((\pi(\GG, K^p)^{\la})^{N_p}\right) \simeq \bigoplus_{\frakm} H^\ast\left((\pi(\GG, K^p)_{\frakm}^{\la})^{N_p}\right). \]
Hecke operators at $p$ commute with the spherical Hecke action, therefore preserving each $H^\ast_h\left((\pi(\GG, K^p)_{\frakm}^{\la})^{N_p}\right)$-component.
According to Prop \ref{GAEA}, we may define $H^\ast\left(J_{\BB_p}(\pi(\GG, K^p))\right)_\m$ to be the finite slope part of cohomology $H^\ast_h\left((\pi(\GG, K^p)_{\frakm}^{\la})^{N_p}\right)$ of the complex $\left(\pi(\GG, K^p)_{\frakm}^{\la}\right)^{N_p}$.

As direct summands, $H^\ast\left(J_{\BB_p}(\pi(\GG, K^p))\right)_\frakm$ are essentially admissible as well.
Similarly, we have 
$$\psi_\frakm: \bT^S_\GG \to \End_{\sO(\widehat{\TT}_p)} \left((H^\ast\left(J_{\BB_p}(\pi(\GG, K^p))\right)_\m)'_b\right),$$
$$\psi_{n,\m}: \bT^S_\GG \to \End_{\sO(\widehat{\TT}_p)} \left((H^n\left(J_{\BB_p}(\pi(\GG, K^p))\right)_\m)'_b\right).$$

Let $$D = (\TT_p, H^\ast\left(J_{\BB_p}(\pi(\GG, K^p))\right), \bT^S_\GG, \psi),$$  $$D_n = (\TT_p, H^n\left(J_{\BB_p}(\pi(\GG, K^p))\right), \bT^S_\GG, \psi_n),$$ $$D_\frakm = (\TT_p, H^\ast\left(J_{\BB_p}(\pi(\GG, K^p))\right)_\frakm, \bT^S_\GG, \psi_\frakm),$$ $$D_{n,\frakm} = (\TT_p, H^n\left(J_{\BB_p}(\pi(\GG, K^p))\right)_\frakm, \bT^S_\GG, \psi_{n,\m})$$ be the eigenvariety data defined by Def \ref{EV data}. 

\begin{df}\label{EVs}
We define the eigenvarieties $\cE_\GG(K^p), ~\cE^n_\GG(K^p)$, $\cE_\GG(K^p,\frakm)$, $\cE^n_\GG(K^p,\frakm)$ to be the eigenvarieties associated to $D$, $D_n$, $D_\frakm$, $D_{n,\m}$ by Prop \ref{EV machine}.
\end{df}

\section{An application to eigenvariety of quasi-split unitary group}\label{App uni}
Let $F$ be an (imaginary) CM number field with maximal totally real subfield $F^+$.
Let $E$ be a big enough extension of $\QQ_p$ which contains the images of all embeddings $F \hookrightarrow \Qpbar$.
Let $\widetilde{G}_{/\cO_{F^+}}$ be the quasi-split unitary group with respect to the extension $F/F^+$ and the Hermitian form 
\begin{eqnarray*}
J_n = \begin{pmatrix}
0 & \Psi_n \\
-\Psi_n & 0
\end{pmatrix}, 
\end{eqnarray*} where $\Psi_n$ is the matrix with $1$’s on the anti-diagonal and $0$’s elsewhere.
We choose a standard Siegel parabolic subgroup $P$ containing upper triangular Borel pair $(B, T)$ and maximal $F^+$-split torus $S \subseteq T$ of $\widetilde{G}$.
Similarly let $T \subset G \subset P \subset \widetilde{G}$ be the standard Levi subgroup of $P$. 
We write $P=U \rtimes G$ for the unipotent radical $U$ of $P$, $B = N \rtimes T$ for the unipotent radical $N$ of $B$.
Here we have $G \simeq \Res_{\cO_F/\cO_{F^+}} \GL_n$.

We write $\widetilde{X} = X^{\widetilde{G}}$ and $X = X^G$. Similarly, we will use the symbols $\widetilde{K}$ and $K$ to denote good subgroups of $\widetilde{G}(\A_{F^+}^\infty)$ and $G(\A_{F^+}^\infty)$, respectively.

Let $\wtI_p$ and $I_p$ be the Iwahori subgroups of $\wtG_p$ and $G_p$, compatible with respect to the Borel pair.
Let $P_p := P(F^+_p)$.
Let $\wtG_0 = \wtG(\cO_{F^+_p})$ (resp. $G_0 = G(\cO_{F^+_p})$) be a hyperspecial maximal compact subgroup of $\wtG_p$ (resp. $G_p$).
Let $P_0 := P(\cO_{F^+_p})$, $N_0 := \wtG_0 \cap N(F^+_p)$.
Let $\widetilde{K} \subset \widetilde{G}(\A_{F^+}^\infty)$ be a good subgroup which is decomposed with respect to $P$ (\cite[\S 2.1.2]{ACC+18}), and $\widetilde{K}_P:=\wtK \cap P(\A_{F^+}^\infty) = \widetilde{K}_U \rtimes K$.
For any finite place $v$ of $F^+$, we use $\widetilde{K}_v$ (resp. $K_v$) to denote the level $\widetilde{K}$ at $v$ (level $K$ at $v$).
Let $S$ be a finite set of bad finite places containing $S_p$ such that $\widetilde{K}_v$ (resp. $K_v$) is hyperspecial for $v \in S$.
We let $\bT^S:=\bT^S_G$ and $\widetilde{\bT}^S:=\bT^S_{\wtG}$.
There is an unramified Satake transform $\cS: \widetilde{\bT}^S \to \bT^S$ defined in \cite[\S 2.2.6]{NT15} (see \cite[\S 5.1]{NT15} for more explicit descriptions).

With notation in \S \ref{EV construction}, we define 
\[ \pi(K^p) := \pi(G, K^p) \in \bD^b(\Ban^\ad_{G_0}(E)), ~~ \Pi(\wtK^p) := \pi(\wtG, \wtK^p) \in \bD^b(\Ban^\ad_{\wtG_0}(E)) \] equipped with 
\begin{eqnarray}\label{Hecke G tildeG} \bT^S \to \End_{\bD^b(\Ban^\ad_{G_0}(E))}(\pi(K^p)), ~~ \widetilde{\bT}^S \to \End_{\bD^b(\Ban^\ad_{\wtG_0}(E))}(\Pi(\wtK^p)). \end{eqnarray}

Recall $\partial \mathfrak{X}_{\widetilde{G}}$ as in \S \ref{notation}. 
We define $\Pi_\partial(\wtK^p) \in \bK^b(\Ban^\ad_{\wtG_0}(E))$ to be a chosen complex of admissible continuous Banach representations of $\wtG_0$ arising from a choice of finite triangulation of Borel-Serre boundary of locally symmetric space of $\wtG$ with level $\wtG_0 \cdot \wtK^p$ (similar to $S^\bullet$ as in \S \ref{comp}). 
We define $\widetilde{\widetilde{\pi}}_\partial(\wtK^p) \in \bK^b(\mathrm{Ban}_{\wtG_p}(E))$ to be the complex of continuous Banach representations of $P_p$ arising from all possible singular simplices (similar to $A^\bullet$ as in \S \ref{comp}).

They are equipped with Hecke actions:
\[ \widetilde{\bT}^S \to \End_{\bD^b(\Ban^\ad_{\wtG_0}(E))}(\Pi_\partial(\wtK^p)), ~~ \widetilde{\bT}^S \to \End_{\bK^b(\Ban_{\wtG_p}(E))}(\widetilde{\widetilde{\pi}}_\partial(\wtK^p)). \]

If $\frakm \subset \bT^S$ is a maximal ideal, let $\widetilde{\frakm} = \cS^\ast(\m) \subset \widetilde{\bT}^S$ be the pullback of $\frakm$ in $\widetilde{\bT}^S$. 
As $\widetilde{\bT}^S / \widetilde{\frakm} \hookrightarrow \bT^S / \frakm$ as a subring of a finite field (therefore a subfield), $\widetilde{\frakm}$ is a maximal ideal of $\widetilde{\bT}^S$.

By the same reason before Prop \ref{Hecke semiloc} in \S \ref{EV construction}, we can decompose $\pi(K^p)$, $\Pi(\wtK^p)$, $\Pi_\partial(\widetilde{K}^p)$ and \[ \pi(K^p)_\frakm \in \bD^b(\Ban^\ad_{G_0}(E)), ~~ \Pi(\wtK^p)_\frakm, ~ \Pi_\partial(\widetilde{K}^p)_{\widetilde{\m}} \in \bD^b(\Ban^\ad_{\wtG_0}(E)) \] are all well defined for each relevant maximal ideal of the corresponding derived Hecke algebras. 

\begin{prop}\label{boundary direct summand}
Suppose $\m$ is a non-Eisenstein ideal (\cite[Def 2.3.6]{ACC+18}).
Let $\widetilde{K} \subset \widetilde{G}(\A_{F^+}^\infty)$ be a good subgroup which is decomposed with respect to $P$.
We have $\Ind_{P_0}^{\wtG_0} \pi(K^p)_{\frakm}$ is a $\widetilde{\bT}^S$-equivariant direct summand of $\Pi_\partial(\widetilde{K}^p)_{\widetilde{\frakm}}$ in $\bD^b(\Ban^\ad_{\wtG_0}(E))$.
\end{prop}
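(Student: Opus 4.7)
The plan is to combine the Borel--Serre stratification of the boundary $\partial \overline{X}^{\wtG}$ with the semilocal Hecke decomposition of Prop \ref{Hecke semiloc} and the non-Eisenstein hypothesis on $\frakm$. Recall the stratification
\[ \partial \overline{X}^{\wtG} = \bigsqcup_{[Q]} e(Q), \]
where $[Q]$ runs over $\wtG(F^+)$-conjugacy classes of proper $F^+$-rational parabolic subgroups and each stratum $e(Q)$ is a nilmanifold bundle over the symmetric space for the Levi $L_Q$. First, I would choose a finite triangulation of $\partial \overline{X}^{\wtG}$ compatible with this stratification (and with the $\wtK$-level structure) so that the resulting complex $\widetilde{\pi}_\partial(\widetilde{K}^p)$ inherits a $\widetilde{\bT}^S$-equivariant filtration in $\bK^b(\mathrm{Ban}^\ad_{\wtG_0}(E))$ whose graded pieces are indexed by conjugacy classes $[Q]$.

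Second, I would identify the graded piece attached to the Siegel parabolic $[P]$. Using that $\widetilde{K}$ is decomposed with respect to $P$, weak approximation, and the functorial construction of Prop \ref{Ind ext}, the simplices supported on the $[P]$-stratum are parametrized by cosets $\wtG_0/P_0$ twisted by $G$-simplices at level $K\cdot K^p$. This identifies the $[P]$-piece in $\bD^b(\mathrm{Ban}^\ad_{\wtG_0}(E))$ with
\[ \Ind_{P_0}^{\wtG_0} \pi(K^p). \]
Hecke-equivariance under the unramified Satake transform $\cS\colon \widetilde{\bT}^S \to \bT^S$ is then a standard local computation: the action of $\widetilde{T} \in \widetilde{\bT}^S$ on the $[P]$-stratum factors through $\cS(\widetilde{T})$ acting on $\pi(K^p)$, because on the $e(P)$-stratum the Hecke correspondences descend to $G$ via the constant-term map, which is exactly how the Satake isomorphism is realized geometrically.

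Third, I would combine this with the semilocal decomposition $\pi(K^p) \simeq \bigoplus_{\frakm'} \pi(K^p)_{\frakm'}$. Parabolic induction is additive in the $\frakm'$ variable and the Satake compatibility shows that $\Ind_{P_0}^{\wtG_0} \pi(K^p)_\frakm$ lands in the $\widetilde{\frakm} = \cS^\ast(\frakm)$-component of the $[P]$-graded piece. To conclude that it is a direct summand of $\widetilde{\pi}_\partial(\widetilde{K}^p)_{\widetilde{\frakm}}$ rather than a mere subquotient, I would argue that the filtration splits at the $[P]$-level after localization: for $[Q] \neq [P]$, the corresponding graded piece is, by the analogous identification, (parabolically) induced from boundary cohomology of a proper reductive Levi $L_Q$ whose semisimple part is a product of smaller unitary and $\GL$-factors, not conjugate to $G$. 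Its Satake-pulled-back Hecke eigensystems cannot contribute to $\widetilde{\frakm}$ once $\frakm$ is non-Eisenstein on the $\GL_n$-side, since the associated residual Galois representations would factor through a proper Levi, contradicting the (absolute) irreducibility encoded by non-Eisenstein-ness of $\frakm$.

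The hard part will be the last splitting step: proving that \emph{every} stratum $[Q]\neq[P]$ vanishes after localization at $\widetilde{\frakm}$, which is what upgrades the filtration to a direct sum decomposition and forces the desired direct summand property. This reduces to a representation-theoretic and Galois-theoretic analysis in the spirit of \cite{ACC+18}, governed by the non-Eisenstein hypothesis on $\frakm$ and, via pullback, by the hypothesis of \cite[Thm 1.1]{CS19} on $\widetilde{\frakm}$. A subsidiary technical point is carrying out the geometric identification in the second step cleanly at the level of complexes (and not merely on cohomology), which I expect to follow from a bookkeeping of simplices in a stratification-adapted triangulation together with Prop \ref{Ind ext}.
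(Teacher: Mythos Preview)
Your outline is conceptually on target and tracks the Borel--Serre stratification / non-Eisenstein strategy that underlies \cite[Thm 5.4.1]{ACC+18}, which is precisely the input the paper invokes. The technical route, however, differs. The paper does \emph{not} carry out the stratification analysis directly in $\bD^b(\Ban^\ad_{\wtG_0}(E))$; instead it appeals to \cite[Thm 5.4.1]{ACC+18} to obtain the $\widetilde{\bT}^S$-equivariant direct summand at each finite level $(r,m)$ with torsion coefficients, and then passes to the limit. The passage uses the uniform finiteness of (\ref{uniform finiteness}) to control the localized perfect complexes as $(r,m)$ vary, the identification (\ref{limit loc}) expressing $\pi^\circ(\GG,K^p)_\frakm$ as a limit of the finite-level pieces, and Prop~\ref{ind projlim} to commute continuous parabolic induction with the projective limit. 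Your plan of running the filtration-and-splitting argument directly at the completed Banach level should also work in principle, but it forces you to rebuild inside $\bD^b(\Ban^\ad_{\wtG_0}(E))$ machinery (stratification-adapted filtration, idempotent splitting, geometric Satake compatibility) that \cite{ACC+18} already provides at finite level; the paper's finite-level-then-limit approach simply outsources that work.

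One correction: you invoke the hypothesis of \cite[Thm 1.1]{CS19} on $\widetilde{\frakm}$, but the present proposition only assumes $\frakm$ is non-Eisenstein. The Caraiani--Scholze condition enters later, in the proof of Thm~\ref{main glo}, to control compactly supported overconvergent cohomology; it plays no role in the direct-summand statement here.
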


Here and below we will write Inf($C_\bullet$) when viewing a complex $C_\bullet$ of representations of $G_0$ as a complex of representations of $P_0$ via $P_0 \to G_0$. 
We also write $\pi(K^p)_{\m}$ as the inflation complex $\mathrm{Inf} (\pi(K^p)_{\m})$ of representations of $P_0$ whose underlying complex is $\pi(K^p)_{\m}$ in order to lighten the notation.

\begin{proof}
The claim follows from a combination of proof of \cite[Thm 5.4.1]{ACC+18}, uniform finiteness of Lem \ref{uniform finiteness}, Lem \ref{limit loc}, together with Prop \ref{ind projlim}. 
\end{proof}

Let \[ \pi_P(\widetilde{K}^p) := \pi(P, \wtK^p \cap P(\AA^{\infty,p}_{F^+})) \in \bK^b(\Ban^\ad_{P_0}(E)), \]
\[ \mathrm{and} ~ \widetilde{\pi}_P(\widetilde{K}^p) := \widetilde{\pi}(P, \wtK^p \cap P(\AA^{\infty,p}_{F^+})) \in \bK^b(\mathrm{Ban}_{P_p}(E)) \]
as in \S \ref{EV construction}. 
There is a $P_p$-extension \begin{eqnarray}\label{Pp ext} \pi_P(\widetilde{K}^p) \stackrel[i]{p}{\leftrightarrows} \widetilde{\pi}_P(\widetilde{K}^p)\end{eqnarray} constructed in \S \ref{comp} by Lem \ref{Kp-equi}.

There are corresponding Hecke actions
\[ \psi_\partial : \widetilde{\bT}^S \to \End_{\bD^b(\Ban^\ad_{P_0}(E))}(\pi_P(\widetilde{K}^p)), \]
\[ \psi_\partial : \widetilde{\bT}^S \to \End_{\bK^b(\Ban_{P_p}(E))}(\widetilde{\pi}_P(\widetilde{K}^p)) \]
and decomposition of $\pi_P(\wtK^p)$ into components indexed by maximal ideals similarly to (\ref{Hecke G tildeG}).

For every finite level $\widetilde{K}_P=\widetilde{K}_U \rtimes K$, we have a torus bundle $X^P_{\wtK_P} \twoheadrightarrow X^G_K$.
And taking levels at $p$ as open compact subgroups of $G_0$, we get a $\widetilde{\bT}^S$-equivariant natural map (similar to \cite[(5.4.2)]{ACC+18}) \[ \mathrm{Inf} (\pi(K^p)) \to \pi_P(\wtK^p) \in \bD^b(\Ban^\ad_{P_0}(E)), \] where the Hecke action on the left hand side factors through the Satake morphism $\cS$.
By decomposing with respect to $\widetilde{\frakm}$, we have $\mathrm{Inf} (\pi(K^p)_\frakm) \to \pi_P(\wtK^p)_{\widetilde{\frakm}}$.
They are quasi-isomorphisms since completed cohomology for $U$ (whose locally symmetric space is torus) concentrates in degree $0$.

We may represent $\mathrm{Inf} (\pi(K^p)_\frakm)$ or $\pi_P(\wtK^p)_{\widetilde{\m}}$ by an injective resolution of objects in $\Ban^\ad_{P_0}(E)$, so does $\Ind^{\wtG_0}_{P_0} \pi(K^p)_\frakm$ as a complex of injective objects in $\Ban^\ad_{\wtG_0}(E)$ by Lem \ref{Ind Inj}.

There is a $\wtG_p$-extension \begin{eqnarray}\label{Ind Pp ext} \Ind^{\wtG_0}_{P_0} \pi_P(\wtK^p) \stackrel[\tilde{i}]{\tilde{p}}{\leftrightarrows} \Ind^{\wtG_p}_{P_p} \widetilde{\pi}_P(\wtK^p)\end{eqnarray} by Prop \ref{Ind ext} induced from (\ref{Pp ext}).
Taking locally analytic vectors, (\ref{Ind Pp ext}) induces a $\wtG_0$-equivariant homotopy equivalence $$\operatorname{la-Ind}^{\wtG_0}_{P_0} ( \pi_P(\wtK^p)^\la) \stackrel[\tilde{i}]{\tilde{p}}{\leftrightarrows} \operatorname{la-Ind}^{\wtG}_P (\widetilde{\pi}_P(\wtK^p)^\la)$$ by Thm \ref{Ind la}.
We can form essentially admissible representations \[ H^\ast\left(J_{B_p}(\Ind^{\wtG_0}_{P_0} \pi_P(\wtK^p) )\right)_{\widetilde{\m}} \] similar to the discussion before Def \ref{EVs}.

\begin{df}
We call the \emph{induction eigenvariety $\mathscr{E}^d(\Ind, G, K^p, \m)$ for $G \simeq \Res_{\cO_F/\cO_{F^+}} \GL_n$ of degree $n$, type $\m$} for the eigenvariety associated to the eigenvariety data (recall $\widetilde{\m}$ is the pullback of $\m$) $$D^\Ind_{G,d} := (T_p, H^d\left(J_{B_p} (\Ind^{\wtG_0}_{P_0} \pi_P(\wtK^p) )\right)_{\widetilde{\m}}, \widetilde{\bT}^S, \psi_\partial).$$
\end{df}

This induction eigenvariety $\mathscr{E}^d(\Ind, G, K^p, \m)$ should be closely related to eigenvariety for $G$. 
But we do not know the exact relationship.

Let $\mathscr{E}^d_{\wtG}(\wtK^p, \widetilde{\frakm})$ be the eigenvariety for $\wtG$ and $\widetilde{\frakm}$ constructed in \S \ref{EV construction}.
\begin{thm}\label{main glo}
Suppose $F^+ \neq \QQ$ and $\widetilde{\m}$ satisfies the conditions of \cite[Thm 1.1]{CS19}.
If $d = [F^+ : \QQ]n^2$ is the middle dimension of $X^{\wtG}$, there is a closed embbeding from $\mathscr{E}^d(\Ind, G, K^p, \m)$ to the $\wtG$-eigenvariety $\mathscr{E}^d_{\wtG}(\wtK^p, \widetilde{\frakm})$.
\end{thm}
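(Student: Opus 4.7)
The plan is to apply Prop \ref{EV func}: I will exhibit a $\widetilde{\bT}^S[p^{-1}]$-equivariant surjection of essentially admissible $T_p$-representations
\[ H^d(J_{B_p}(\widetilde{\pi}(\wtK^p)))_{\widetilde{\m}} \twoheadrightarrow H^d(J_{B_p}(\Ind^{\wtG_0}_{P_0} \pi_P(\wtK^p)))_{\widetilde{\m}} \]
with the identity map on $\widetilde{\bT}^S[p^{-1}]$. This will immediately yield the desired closed embedding of $\mathscr{E}^d(\Ind, G, K^p, \m)$ into $\mathscr{E}^d_{\wtG}(\wtK^p, \widetilde{\m})$. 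The surjection is assembled from a long exact sequence on the $\wtG$ side and a direct summand decomposition on the $\pi_\partial$ side.

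For the long exact sequence, I would use Prop \ref{Hecke semiloc} to represent $\widetilde{\pi}_c(\wtK^p)_{\widetilde{\m}}$, $\widetilde{\pi}(\wtK^p)_{\widetilde{\m}}$, $\widetilde{\pi}_\partial(\wtK^p)_{\widetilde{\m}}$ by chain complexes of injective objects in $\Ban^\ad_{\wtG_0}(E)$, obtained from localizing (\ref{SES ad}) at $\widetilde{\m}$ via the idempotent decomposition, so that they still fit into a short exact sequence of chain complexes. By the corollary to Lem \ref{la acyc}, injectives in $\Ban^\ad_{\wtG_0}(E)$ are acyclic for $V \mapsto (V^\la)^{N_0}$, so termwise exactness is preserved after applying $((-)^\la)^{N_0}$; then the finite slope functor $(-)_\fs$ preserves exactness by Thm \ref{FSEX} (the subspace/quotient topology hypothesis being supplied by the injectivity-driven acyclicity). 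Passing to cohomology and using Prop \ref{GAEA} to identify $H^\ast(J_{B_p}(-))$ with the finite slope part of Hausdorff cohomology of $((-)^\la)^{N_0}$, one obtains the long exact sequence
\[ \cdots \to H^d(J_{B_p}(\widetilde{\pi}(\wtK^p)))_{\widetilde{\m}} \to H^d(J_{B_p}(\widetilde{\pi}_\partial(\wtK^p)))_{\widetilde{\m}} \to H^{d+1}(J_{B_p}(\widetilde{\pi}_c(\wtK^p)))_{\widetilde{\m}} \to \cdots. \]
The hypothesis on $\widetilde{\m}$ together with $F^+ \neq \QQ$ triggers the Caraiani-Scholze vanishing \cite[Thm 1.1]{CS19}: compactly supported mod-$p$ completed cohomology of $X^{\wtG}_{\wtG_0\wtK^p}$ at $\widetilde{\m}$ vanishes above middle degree $d$. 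Via Lem \ref{loc m van}/Rem \ref{loc m van'} and the overconvergent comparison Thm \ref{comp to overconvergent}, this upgrades to $H^{d+1}(J_{B_p}(\widetilde{\pi}_c(\wtK^p)))_{\widetilde{\m}} = 0$ (vanishing of ordinary cohomology forces vanishing of Hausdorff cohomology, hence of its finite slope part), so the connecting map vanishes and one obtains the $\widetilde{\bT}^S$-equivariant surjection $H^d(J_{B_p}(\widetilde{\pi}(\wtK^p)))_{\widetilde{\m}} \twoheadrightarrow H^d(J_{B_p}(\widetilde{\pi}_\partial(\wtK^p)))_{\widetilde{\m}}$.

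For the direct summand, Prop \ref{boundary direct summand} identifies $\Ind^{\wtG_0}_{P_0} \pi(K^p)_{\m}$ (with $\pi(K^p)_\m$ inflated to $P_0$) as a $\widetilde{\bT}^S$-equivariant direct summand of $\widetilde{\pi}_\partial(\wtK^p)_{\widetilde{\m}}$ in $\bD^b(\Ban^\ad_{\wtG_0}(E))$. The quasi-isomorphism $\mathrm{Inf}(\pi(K^p)) \xrightarrow{\sim} \pi_P(\wtK^p)$ from the torus fibration $X^P \twoheadrightarrow X^G$, combined with Prop \ref{Ind ext} (using Lem \ref{Ind Inj} to pass through injective resolutions), identifies this summand with $\Ind^{\wtG_0}_{P_0} \pi_P(\wtK^p)_{\widetilde{\m}}$. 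Since $J_{B_p}$ is additive, $H^d(J_{B_p}(\Ind^{\wtG_0}_{P_0} \pi_P(\wtK^p)))_{\widetilde{\m}}$ is then a $\widetilde{\bT}^S$-equivariant direct summand of $H^d(J_{B_p}(\widetilde{\pi}_\partial(\wtK^p)))_{\widetilde{\m}}$. Composing the surjection obtained above with the projection onto this summand yields the required map, and Prop \ref{EV func} concludes.

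The main obstacle will be verifying the topology/strictness hypotheses required for Thm \ref{FSEX} to apply to the short exact sequence of $N_0$-invariant locally analytic vector complexes — specifically, that the injectivity granted by Prop \ref{Hecke semiloc} correctly propagates through $(-)^\la$ and $(-)^{N_0}$ to yield the subspace and quotient topologies needed by the finite slope exactness theorem. A secondary technical point is the careful translation of the Caraiani-Scholze vanishing (formulated at hyperspecial level with $\FF_p$-coefficients) into the required essentially-admissible vanishing of $H^{d+1}(J_{B_p}(\widetilde{\pi}_c(\wtK^p)))_{\widetilde{\m}}$, which requires compatibility of Lem \ref{loc m van}'s Mittag-Leffler argument with the finite slope construction and possibly a level-switch between hyperspecial and Iwahori via Thm \ref{comp to overconvergent}.
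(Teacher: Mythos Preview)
Your proposal is essentially correct and follows the same strategy as the paper: reduce to a surjection via Prop~\ref{EV func}, use the compact/boundary long exact sequence together with Caraiani--Scholze vanishing (fed through Thm~\ref{comp to overconvergent} and Rem~\ref{loc m van'}) to kill the obstruction in degree $d+1$, then project onto the induction summand supplied by Prop~\ref{boundary direct summand}.

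The one genuine difference in execution is the order in which you localize at $\widetilde{\frakm}$. You propose to localize \emph{first}, choosing injective representatives for $\widetilde{\pi}_c(\wtK^p)_{\widetilde{\m}}$, $\widetilde{\pi}(\wtK^p)_{\widetilde{\m}}$, $\widetilde{\pi}_\partial(\wtK^p)_{\widetilde{\m}}$ via Prop~\ref{Hecke semiloc}, and then assert these still sit in a short exact sequence of chain complexes. That assertion is not automatic: the idempotent decomposition of the derived Hecke algebra lives in $\bD^b$, not at the chain level, so after localization you only get a distinguished triangle, and there is no a~priori reason you can realize all three as termwise-injective complexes fitting into an honest short exact sequence simultaneously. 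The paper sidesteps this entirely by working with the \emph{non-localized} sequence~(\ref{SES ad}) at Iwahori level, where each term is already a finite sum of copies of $\mathcal{C}(\hat{T}_{n/\Delta},E)$ and hence injective; Lem~\ref{la acyc} then gives the termwise short exact sequence after $((-)^{\la})^{N_0}$, Thm~\ref{comp to overconvergent} identifies the cohomology with overconvergent cohomology, and one localizes at $\widetilde{\frakm}$ only \emph{after} passing to cohomology (where localization is exact). Surjectivity on ordinary $H^d$ forces surjectivity on Hausdorff $H^d_h$, and then Prop~\ref{GAEA} plus Thm~\ref{FSEX} push this through to $H^d(J_{B_p}(-))$. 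Your ``main obstacle'' paragraph correctly senses a difficulty, but it is really this localization-order issue rather than a strictness problem; adopting the paper's order resolves it with no extra work.
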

\begin{proof}
First, we exhibit $H^d\left(J_{B_p} (\Ind^{\wtG_0}_{P_0} \pi_P(\wtK^p) )\right)_{\widetilde{\m}}$ as a quotient of \break $H^d\left(J_{B_p} (\Pi(\wtK^p) ) \right)_{\widetilde{\frakm}}$.
By exactness of finite slope part of Thm \ref{FSEX} and Prop \ref{boundary direct summand}, it suffices to prove the surjectivity of $H^d\left((\Pi(\wtK^p)^\la)^{N_0}\right)_{\widetilde{\frakm}} \to H^d \left((\Pi_\partial(\widetilde{K}^p)^\la)^{N_0}\right)_{\widetilde{\frakm}}$ (and therefore the surjectivity of corresponding Hausdorff cohomology).

Consider the short exact sequence (\ref{SES ad}) of admissible continuous $E$-Banach representations of $\wtI_p$ for $\wtG$ with tame level $\wtK^p$ of \S \ref{comp}:
$$0 \to S^\bullet_c(\wtG, \wtK^p) \to S^\bullet(\wtG, \wtK^p) \to S^\bullet_\partial(\wtG, \wtK^p) \to 0,$$
where $S^\bullet(\wtG, \wtK^p)$ is homotopy equivalent to $\Pi(\wtK^p)$ and $S^\bullet_\partial(\wtG, \wtK^p)$ is homotopy equivalent to $\Pi_\partial(\widetilde{K}^p)$.
By the long exact sequence of $N_0$ cohomology associated to this short exact sequence and Thm \ref{comp to overconvergent}, it suffices to prove vanishing of \break $H^{d+1}_c (X_{\wtG, \wtI_p\wtK^p}, \underline{\Ind^\la_{\wtI_p}})_{\widetilde{\frakm}}$.
This follows from the main result of \cite[Thm 1.1]{CS19} and corollary \ref{loc m van'} in \S \ref{comp}.
We win by applying Prop \ref{EV func} to the quotient essentially admissible representation $H^d \left(J_{B_p}(\Ind^{\wtG_0}_{P_0} \pi_P(\wtK^p) ) \right)_{\widetilde{\m}}$ of $H^d \left(J_{B_p}(\Pi(\wtK^p)) \right)_{\widetilde{\frakm}}$.
\end{proof}

\end{document}